\documentclass[11pt]{article}
\pdfoutput=1
\usepackage[margin=1in]{geometry}
\usepackage{setspace}
\usepackage{graphicx} 
\usepackage{amsmath,amssymb,amsthm,mathrsfs,enumerate}
\usepackage[numbers]{natbib}
\usepackage{bm}
\usepackage{verbatim}
\usepackage{mathtools}
\usepackage{tikz}
\usepackage{titlesec}
\usepackage[titletoc,toc,title]{appendix}
\AtBeginEnvironment{appendices}

\usetikzlibrary{arrows.meta}
\RequirePackage[colorlinks,citecolor=blue,urlcolor=blue]{hyperref}
\newcommand{\dd}{\mathop{}\!\mathrm{d}}
\newcommand{\R}{\mathbb{R}}
\newtheorem{theorem}{Theorem}[section]
\newtheorem{lemma}[theorem]{Lemma}
\newtheorem{proposition}[theorem]{Proposition}

\newtheorem{remark}[theorem]{Remark}
\theoremstyle{definition}
\numberwithin{equation}{section}
\newtheorem{definition}[theorem]{Definition}

\newcommand{\E}{\mathbb{E}}
\providecommand{\abs}[1]{\lvert#1\rvert}

\title{\bf Optimal Two-Mode Switching with Setup Costs for a Controlled Ornstein--Uhlenbeck Process}
\author{Hongkun Xia\footnote{Academy of Mathematics and Systems Science, Chinese Academy of Sciences, Beijing, 100190, China; School of Mathematical Sciences, University of Chinese Academy of Sciences, Beijing, 100049, China; xiahongkun@amss.ac.cn},\ \ 
Dacheng Yao\footnote{State Key Laboratory of Mathematical Sciences, Academy of Mathematics and Systems Science, Chinese Academy of Sciences, Beijing 100190, China; School of Mathematical Sciences, University of Chinese Academy of Sciences, Beijing, 100049, China; dachengyao@amss.ac.cn}}

\begin{document}
\maketitle

\begin{abstract}
    We study an infinite-horizon discounted optimal switching problem for an Ornstein--Uhlenbeck process whose mean-reversion level can be switched between two values. Switching incurs positive, potentially asymmetric fixed costs, and the mode with the higher mean-reversion level entails an additional running cost. The revenue rate is proportional to the positive part of the state. We characterize the value function and identify parameter conditions under which an optimal policy either remains permanently in one mode after a possible costly initial switch or follows a threshold rule. The threshold formulation allows infinite boundaries and therefore includes one-way switching. A key technical contribution is to establish existence and uniqueness of the relevant solutions to the coupled nonlinear free-boundary equations arising from value-matching and smooth-fit conditions. In the corresponding parameter regimes, these solutions determine the finite switching thresholds and the coefficients of the candidate value functions. A verification argument then establishes the optimality of the resulting policies over all admissible switching strategies.
\end{abstract}

\textbf{Keywords:} optimal switching, Ornstein--Uhlenbeck process, infinite-horizon, discounted model

\section{Introduction}
Mean-reverting diffusion models arise in a variety of applications, including production and inventory management \cite{cadenillas2010optimal,cadenillas2013optimal}, asset trading \cite{EloeLYYZ2008,ngo2016optimal,Suzuki2021,ZhangZhang2008},
and customer satisfaction management \cite{KanoriaLobelLu2024}. In some applications, the operating mode affects not only the current
payoff but also the evolution of the underlying state. In particular, changing the operating mode may alter the level toward which the state reverts. Such decisions involve a dynamic tradeoff: a mode with a higher mean-reversion level may improve future revenues while requiring a higher current operating cost. Furthermore, fixed costs of changing modes introduce an additional incentive to postpone adjustments and retain the current mode.

In this paper, we study an infinite-horizon discounted optimal switching problem in which the mean-reversion level of an Ornstein--Uhlenbeck (OU) process can be switched between two values. Switches between two modes incur positive, potentially asymmetric fixed costs. The revenue rate is proportional to the positive part of the state, while the mode with the higher mean-reversion level incurs an additional running cost. Our objective is to characterize the value function and determine how the model parameters govern the structure of an optimal switching policy. 

More precisely, let the state process $X=\{X(t):t\geq 0\}$ satisfy
\begin{equation*}
\mathrm{d}X(t)=\theta\bigl(\mu(t)-X(t)\bigr)\,\mathrm{d}t+\sigma\,\mathrm{d}B(t),\qquad t\geq 0,
\end{equation*}
where $\theta>0$ is the mean-reversion speed, $\sigma>0$ is the volatility parameter, and $B$ is a standard $\{\mathcal{F}_t\}$-Brownian motion on a filtered probability space $(\Omega,\mathcal{F},\{\mathcal{F}_t\}_{t\geq 0},\mathbb{P})$ satisfying the usual conditions. At any time $t\geq0$, the control $\mu(t)$ takes values in $\mathcal{U}:=\{\mu_{\ell},\mu_h\}$, $\mu_{\ell}<\mu_h$, and represents the mean-reversion level associated with the active operating mode. Thus, switching changes the drift of the state process. We assume a common positive volatility in the two operating modes. This simplifying assumption is common in the literature; see e.g. \cite{CaoYao2018,OrmeciMatogluVandeVate2011}. An upward switch from $\mu_{\ell}$ to $\mu_h$ incurs a fixed cost $K_h>0$, and a downward switch from $\mu_h$ to $\mu_{\ell}$ incurs a fixed cost $K_{\ell}>0$. Write
\[
K(\mu_{\ell},\mu_h)=K_h\quad\text{and}\quad K(\mu_h,\mu_{\ell})=K_{\ell}.
\]
The running revenue at state $x$ is $rx^+$, where $r>0$ and $x^+:=\max\{x,0\}$. We normalize the running cost of the low mode to zero and denote the additional running cost of the high mode by $c>0$.

Given an initial state $X(0)=x\in\mathbb{R}$ and an initial mode $\mu(0-)=\mu\in\mathcal{U}$, a switching policy is represented by $\pi=\{\tau_n:n\geq1\}$, where $\tau_n$ is an $\{\mathcal{F}_t\}$-stopping time taking values in $[0,\infty]$. Here we note $X(0)=X(0-)$ because there is no jump for the state. We require that the switching times be nondecreasing, that $\tau_n<\tau_{n+1}$ on $\{\tau_n<\infty\}$, and that $\tau_n\to\infty$ almost surely. Each finite $\tau_n$ represents an actual switch to the other mode. In particular, $\tau_1=0$ represents an immediate switch from the given initial mode. Since there are only two modes, the initial mode and the switching times uniquely determine all subsequent modes.
After the last actual switch, all subsequent switching times are set to infinity. The resulting control process is adapted, right-continuous, and piecewise constant, and is given by
\[
\mu(t)
=\mu\mathbf1_{\{0\leq t<\tau_1\}}
+\sum_{\{n\geq1:\tau_n<\infty\}}
\mu(\tau_n)\mathbf1_{\{\tau_n\leq t<\tau_{n+1}\}},
\qquad t\geq0.
\]
Such a policy is called admissible, and $\Pi$ denotes the class of all admissible policies.

For a discount rate $\beta>0$, the expected discounted net profit under policy $\pi$ is
\begin{align}
\mathcal{J}\bigl((x,\mu),\pi\bigr)
&= \mathbb{E}_{(x,\mu)}\Big[
    \int_0^\infty e^{-\beta t}
    \Bigl(
        r\bigl(X(t)\bigr)^+
        - c\mathbf{1}_{\{\mu(t)=\mu_h\}}
    \Bigr)\,\dd t
\nonumber  \\
&\qquad {}-
    \sum_{\{n:\tau_n<\infty\}}
    e^{-\beta\tau_n}
    K\bigl(\mu(\tau_n-),\mu(\tau_n)\bigr)
    \Big],\label{eq:objective}
\end{align}
where $\mathbb{E}_{(x,\mu)}$ denotes expectation under the specified initial state and mode. The objective is to maximize the expected discounted net profit:
\[
V(x,\mu):=\sup_{\pi\in\Pi}\mathcal J((x,\mu),\pi).
\]

An application of this model is short-video advertising, where higher intensity broadens targeting from users with strong product affinity to those with moderate affinity. Here, $X(t)$ denotes goodwill, and $\mu_\ell,\mu_h$ are its equilibrium levels under the two advertising intensities. The dynamics capture goodwill accumulation and proportional depreciation, following Nerlove and Arrow \cite{NerloveArrow1962}, with additive Brownian noise as in \cite{Marinelli2007,Raman2006}. Our revenue specification $r(X(t))^+$ is consistent with that in \cite{GrossetViscolani2009}: revenue is proportional to goodwill when goodwill is positive and zero otherwise. Higher intensity incurs an additional running cost $c$, while switching costs represent campaign adjustment expenses.

Our problem belongs to the literature on stochastic optimal switching, which has been studied for Brownian models \cite{Doshi1978,OrmeciMatogluVandeVate2011,WuChao2014}, geometric Brownian motion (GBM) \cite{BrekkeOksendal1994,Dixit1989,duckworth2001model,LyVathPham2007}, mean-reverting diffusion \cite{ngo2016optimal,Suzuki2016,Suzuki2021}, and general diffusion \cite{bayraktar2010one,Doshi1978,Pham2007,pham2009optimal,Song2012}. 
Related optimal stopping and control problems under exogenous regime switching are studied in \cite{GuoZhang2004,SongStockbridgeZhu2011,Zhu2011}.
A distinction relevant to the present paper is whether the state dynamics depend on the active mode. In models with \emph{mode-independent dynamics}, switching affects the payoff structure but leaves the state dynamics unchanged. In models with \emph{mode-dependent dynamics}, the active mode also enters the drift or diffusion coefficient. Our model belongs to the latter category: its mean-reverting dynamics are endogenous to the switching decisions.

Several analytical foundations for switching problems with mode-dependent dynamics have been established. For example, Pham \cite{Pham2007} proves the smooth-fit property of the value functions for one-dimensional optimal switching problems using a viscosity solution approach. Ly Vath and Pham \cite{LyVathPham2007} combine viscosity solutions and smooth fit to obtain explicit solutions in two-mode geometric Brownian settings, including a case with different diffusion operators and identical running profit functions. Bayraktar and Egami \cite{bayraktar2010one} develop a framework for one-dimensional diffusions based on dynamic programming and coupled optimal stopping problems. Their general formulation allows mode-dependent dynamics, whereas several of their subsequent characterizations of switching regions assume mode-independent dynamics. These results provide important foundations for the analysis of switching problems. Identifying the policy structure and resolving the associated free-boundary equations nevertheless require further analysis for the particular dynamics and payoff functions under consideration.

A related controlled OU model is studied by Kanoria et al. \cite{KanoriaLobelLu2024} in the context of customer churn management. Their objective is to maximize rewards over a customer's random lifetime, and the service mode determines the dynamics of customer satisfaction. They characterize optimal policies in a base model with zero volatility in one mode and no fixed switching costs, and also
examine fixed switching costs numerically in an extension. Our setting instead features a positive volatility in both modes and positive, potentially asymmetric fixed switching costs. Our focus is on an analytical characterization of the resulting policy structures and switching boundaries.

Our main contribution is an analytical characterization of the value function and optimal switching policies for an OU process with a controlled mean-reversion level and positive, potentially asymmetric fixed switching costs. We identify conditions on the model parameters under which no-switching policies or $(d,u)$-threshold policies are optimal. Here, a no-switching policy means permanent operation in a single mode, possibly after a switch at time zero whose fixed cost is included in the objective. Infinite thresholds are allowed, so the threshold formulation also includes one-way switching. Specifically, we find that the optimal policy classification is determined by the profit difference between the two permanent-mode policies. Bayraktar and Egami \cite{bayraktar2010one} also use profit difference between the two permanent-mode policies in their Section~2.5. They provide a sufficient condition for retaining the initial mode forever under mode-dependent dynamics, whereas their characterizations of other no-switching policies and threshold policies assume mode-independent dynamics. Our analysis retains mode dependence throughout the complete policy classification and threshold analysis.

A technical difficulty in the verification approach is to establish the existence of solutions to the nonlinear free-boundary equations arising from value-matching and smooth-fit conditions; see, e.g., the introduction in \cite{bayraktar2010one}. In our model, these conditions yield a coupled nonlinear system for the switching thresholds and the coefficients of the candidate value functions associated with the two mode-dependent OU generators. We establish existence and uniqueness of the relevant solutions in the corresponding parameter regimes, thereby rigorously determining the finite switching boundaries and the associated candidate value functions. Ly Vath and Pham \cite{LyVathPham2007} likewise need to solve a similar system of free-boundary equations in their Section~4.1. Their GBM model gives the free-boundary equations a power-function structure that facilitates direct algebraic analysis. In our OU model, the free-boundary equations involve more intricate integral representations. We therefore analyze the shape and zeros of an auxiliary function derived from the free-boundary equations, and combine these properties with the integral of its positive part and the value-matching conditions to establish existence and uniqueness of the relevant solutions. We then prove that these functions satisfy the inequalities required for verification and establish the optimality of the resulting policies over the full class of admissible switching strategies. 

The rest of the paper is organized as follows. Section~\ref{sec:policies} introduces the two policy classes and characterizes their expected discounted profits. Section~\ref{sec:mainresults} presents the main results on the value function and optimal switching policies.
Section~\ref{sec:parameters} establishes existence and uniqueness of the relevant switching thresholds, and Section~\ref{sec:optimality} verifies the optimality of the proposed policies. Section~\ref{sec:conclusion} concludes the paper.

We close this section with some frequently used notation. Let $\Phi$ and $\phi$ be the cumulative distribution function (CDF) and the probability density function (PDF) of the standard normal distribution, respectively. Let $\mathcal{C}^n(X)$ denote the space of $n$-times continuously differentiable functions on $X$. Let $\mathcal{L}^1(X)$ denote the space of Lebesgue integrable functions on $X$. 

\section{Two Classes of Policies}

\label{sec:policies}

In this section, we introduce two classes of admissible policies, namely the no-switching policies and the $(d,u)$-threshold policies, and provide the profit function under these policies. These two classes provide the candidate policies for the optimization problem.

\subsection{No-Switching Policies}

We define the no-switching policies as follows.
\begin{definition}[No-Switching Policies]
\label{def:no-switching}
An admissible policy $\pi$ is called a no-switching policy if no switches occur strictly after the initial time for any initial state and mode $(X(0),\mu(0-))\in \mathbb{R}\times \mathcal{U}$. 
\end{definition}

Among these policies, we consider the two fixed-mode policies, corresponding to the permanent adoption of either mode $\mu_h$ or mode $\mu_\ell$ for all $t > 0$. We emphasize that a policy is still classified as ``no-switching'' even if it involves an instantaneous switch at $t=0$ (e.g., switching from an initial mode $\mu_\ell$ to a permanent mode $\mu_h$), which naturally incurs the corresponding fixed switching cost upfront.

In what follows, we evaluate the expected discounted net profit under the two no-switching policies. Let $X^{h} = \{X^{h}(t):t\geq0\}$ and $X^{\ell} = \{X^{\ell}(t):t\geq0\}$ denote the controlled state processes under the no-switching policies with modes $\mu_h$ and $\mu_\ell$, respectively. That is, they satisfy the stochastic differential equations
\begin{equation}
\label{eq:Xh-Xl}
    \mathrm{d} X^{h}(t) = \theta\bigl(\mu_{h}-X^{h}(t)\bigr)\,\mathrm{d}t + \sigma \,\mathrm{d} B(t) \ \text{and} \ \mathrm{d} X^{\ell}(t) = \theta\bigl(\mu_{\ell}-X^{\ell}(t)\bigr)\,\mathrm{d}t + \sigma \,\mathrm{d} B(t).
\end{equation}
Let $J_h(x)$ (respectively, $J_{\ell}(x)$) denote the expected discounted net profit under the no-switching policy with mode $\mu_h$ (respectively, $\mu_\ell$), given the initial state $x$ and initial mode $\mu_h$ (respectively, $\mu_\ell$). That means no switching occurs for $t\geq0$. Thus, we have
\begin{equation}
    \label{eq:J-h-ell-0}
    \begin{aligned}
        J_{h}(x)
        &= \mathbb{E}_{(x,\mu_h)}\left[
        \int_{0}^{\infty}e^{-\beta t}
        \left(r(X^{h}(t))^{+}-c\right)\,\dd t
        \right] \quad\text{and}\\
        J_{\ell}(x)
        &= \mathbb{E}_{(x,\mu_\ell)}\left[
        \int_{0}^{\infty}e^{-\beta t}
        r(X^{\ell}(t))^{+}\,\dd t
        \right].
    \end{aligned}
\end{equation}
Define their difference 
\begin{equation}
    \label{eq:J}
    J(x) := J_{h}(x)-J_{\ell}(x).
\end{equation}
We next establish the analytical properties of functions $J_h(x)$, $J_{\ell}(x)$, and $J(x)$, which will be used in later sections.
Let $\Gamma_h$ and $\Gamma_\ell$ be the generators of $X^h$ and $X^\ell$ in \eqref{eq:Xh-Xl}, respectively, i.e.,
\begin{align}
    \Gamma_h f(x)= \frac{\sigma^2}{2}f''(x)+\theta(\mu_h-x) f'(x)\ \text{and}\
    \Gamma_\ell f(x)= \frac{\sigma^2}{2}f''(x)+\theta(\mu_\ell-x) f'(x).\label{eq:Gamma-h-ell}
\end{align}
Denote
\begin{equation}
\label{eq:m-v}
    m_t^h(x)=\mu_h+(x-\mu_h)e^{-\theta t},\quad 
    m_t^\ell(x)=\mu_\ell+(x-\mu_\ell)e^{-\theta t},\quad
    \text{and}\quad 
    v_t=\frac{\sigma^2}{2\theta}\left(1-e^{-2\theta t}\right).
\end{equation}

The proof of Lemma \ref{lem:J} is given in Appendix \ref{app-sub:lem-J}.

\begin{lemma}
    \label{lem:J}
    \begin{enumerate}[$(a)$]
        \item We have  
        \begin{align}
            J_h(x)&= r\int_{0}^{\infty}\int_{-\infty}^{m_t^h(x)} e^{-\beta t} \Phi\Big(\frac{z}{\sqrt{v_t}}\Big)\,\dd z\dd t -\frac{c}{\beta},\label{eq:J-h}\\
            J_{\ell}(x)&=  r\int_{0}^{\infty}\int_{-\infty}^{m_t^{\ell}(x)} e^{-\beta t} \Phi\Big(\frac{z}{\sqrt{v_t}}\Big)\,\dd z\dd t,\label{eq:J-l}
        \end{align}
    where $\Phi$ is the CDF of the standard normal distribution, and $m_t^h$, $m_t^\ell$, and $v_t$ are given in \eqref{eq:m-v}.  
    \item We have $J_{h},J_{\ell}\in \mathcal{C}^{2}(\R)\cap \mathcal{C}^{\infty}(\R \backslash \{0\})$; We have $J'_{h}(x)>0$ and $J_{\ell}'(x)>0$ for $x\in\R$, and both $J'_{h}(x)$ and $J_{\ell}'(x)$ are Lipschitz continuous on $\R$; 
    We have $J_{h}''(x)>0$, $J_{\ell}''(x)>0$ for $x\in\R$. 
    Moreover, $J_h(x)$ and $J_\ell(x)$ given by \eqref{eq:J-h}-\eqref{eq:J-l} satisfy the following differential equations: 
    \begin{align}
        &\Gamma_{h}J_h(x) - \beta J_h(x) + r x^+ - c = 0,\label{eq:ODE-J-h} \\
        &\Gamma_{\ell}J_{\ell}(x) - \beta J_\ell(x) + r x^+ = 0, \label{eq:ODE-J-l}
    \end{align}
    where $\Gamma_h$ and $\Gamma_\ell$ are the generators given in \eqref{eq:Gamma-h-ell}.
    \item We have the following limits
    \begin{align*}
        &\lim_{x\rightarrow-\infty}J_{h}(x)=-\frac{c}{\beta},\quad \lim_{x\rightarrow-\infty}J_{\ell}(x)=0,
        \quad \lim_{x\rightarrow\infty}J_{h}(x)=\lim_{x\rightarrow\infty}J_{\ell}(x)=\infty,\\
        &\lim_{x\to-\infty}J_{h}'(x)=\lim_{x\to-\infty}J_{\ell}'(x)=0,\qquad 
        \lim_{x\to\infty}J_{h}'(x)=\lim_{x\to\infty}J_{\ell}'(x)=\frac{r}{\beta+\theta}.
    \end{align*}
    \item We have 
    \begin{align}
     \label{eq:J-explicit}
        J(x)= r\int_{0}^{\infty}\int_{m_t^{\ell}(x)}^{m_t^h(x)} e^{-\beta t} \Phi\Big(\frac{z}{\sqrt{v_t}}\Big)\,\dd z\dd t -\frac{c}{\beta},
    \end{align}
    $J\in \mathcal{C}^{3}(\R)\cap \mathcal{C}^{\infty}(\R \backslash \{0\})$, $J'(x)>0$ for $x\in\R$, and the following limits: 
    \begin{align}
        &\lim_{x\to-\infty}J(x)=-\frac{c}{\beta},\quad
        \lim_{x\to\infty}J(x)=\frac{r\theta(\mu_{h}-\mu_{\ell})}{\beta(\beta+\theta)}-\frac{c}{\beta},\label{eq:lim-J}\\
        &\lim_{x\to-\infty}J'(x)=\lim_{x\to\infty}J'(x)=0.\nonumber
    \end{align}
    \end{enumerate}
\end{lemma}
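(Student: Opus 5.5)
The proof will rest on the explicit Gaussian law of the OU process. Under mode $\mu_h$, $X^h(t)\sim N(m_t^h(x),v_t)$, so $\E[(X^h(t))^+]=\E[(m+\sqrt{v}Z)^+]$ with $m=m_t^h(x)$ and $v=v_t$. This equals $\int_{-\infty}^{m}\Phi(z/\sqrt{v})\,\dd z$. To see this, note that the map $m\mapsto \E[(m+\sqrt vZ)^+]$ has derivative $P(m+\sqrt v Z>0)=\Phi(m/\sqrt v)$ and tends to $0$ as $m\to-\infty$. Fubini/Tonelli, justified because the integrand is nonnegative and $\E[(X^h(t))^+]\le |x|+|\mu_h|+\sqrt{v_t}$, then gives \eqref{eq:J-h} and \eqref{eq:J-l}. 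Part (d)'s formula \eqref{eq:J-explicit} follows by subtraction.

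For (b), I will write $G(m,v)=\int_{-\infty}^m\Phi(z/\sqrt v)\,\dd z$ and differentiate under the integral in $x$, using $\partial_x m_t=e^{-\theta t}$:
\[
J_h'(x)=r\int_0^\infty e^{-(\beta+\theta)t}\Phi\bigl(m_t^h(x)/\sqrt{v_t}\bigr)\,\dd t,
\qquad
J_h''(x)=r\int_0^\infty e^{-(\beta+2\theta)t}\frac{1}{\sqrt{v_t}}\phi\bigl(m_t^h(x)/\sqrt{v_t}\bigr)\,\dd t .
\]
The first derivative is positive and bounded by $r/(\beta+\theta)$. The second is positive, and it is finite because $v_t\sim\sigma^2 t$ near $0$, so $v_t^{-1/2}$ is integrable. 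This also bounds $J_h''$ by a constant, which gives the Lipschitz property of $J_h'$; continuity of $J_h''$ follows by dominated convergence. For smoothness away from $0$: if $x\neq0$, then $m_t^h(x)\to x$ as $t\to0$, so $m_t/\sqrt{v_t}\to\pm\infty$. All higher $x$-derivatives then carry factors $\phi^{(k)}(m_t/\sqrt{v_t})v_t^{-(k+1)/2}$, which decay super-polynomially near $t=0$. Locally uniform domination then gives $\mathcal C^\infty(\R\setminus\{0\})$. The ODEs \eqref{eq:ODE-J-h}–\eqref{eq:ODE-J-l} I will obtain by the Feynman–Kac argument. One route is the direct computation that $u(t,x)=\E_x[(X^h(t))^+]$ solves $\partial_tu=\Gamma_h u$, followed by integration by parts in $t$: $\int_0^\infty e^{-\beta t}\Gamma_hu\,\dd t=\int_0^\infty e^{-\beta t}\partial_tu\,\dd t=-x^++\beta\int_0^\infty e^{-\beta t}u\,\dd t$. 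This uses $u(0,x)=x^+$ and the linear growth of $u$. The alternative is Itô's formula on $e^{-\beta t}J_h(X^h_t)$, justified since $J_h\in\mathcal C^2$.

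For (c), monotone/dominated convergence applies in the displayed formulas. As $x\to-\infty$, $m_t\to-\infty$ for each fixed $t$, so $\Phi\to0$ and $G\to0$; the domination is $G(m,v)\le G(x_0$-bound$)$ by monotonicity in $x$. This yields the limits of $J_h,J_\ell,J_h',J_\ell'$. As $x\to\infty$, $\Phi\to1$, which gives derivative limit $\int_0^\infty re^{-(\beta+\theta)t}\,\dd t=r/(\beta+\theta)$, and $J\to\infty$ because $G\ge m^+$.

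For (d), $J'=J_h'-J_\ell'$ is given by $r\int_0^\infty e^{-(\beta+\theta)t}[\Phi(m^h_t/\sqrt{v_t})-\Phi(m^\ell_t/\sqrt{v_t})]\,\dd t>0$, since $m_t^h-m_t^\ell=(\mu_h-\mu_\ell)(1-e^{-\theta t})>0$ for $t>0$. For the $\mathcal C^3$ claim, $J'''$ involves $v_t^{-1}[\phi'(m^h_t/\sqrt{v_t})-\phi'(m^\ell_t/\sqrt{v_t})]$, and this difference is $O\bigl((1-e^{-\theta t})/\sqrt{v_t}\bigr)=O(\sqrt t)$. The integrand is therefore $O(t^{-1/2})$, which is integrable, so $J\in\mathcal C^3$ even though $J_h,J_\ell$ are only $\mathcal C^2$ at $0$. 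This cancellation estimate is the main obstacle: it needs uniform-in-$x$ domination near $t=0$, which I will get from $|\phi''|\le C$ and the mean value theorem. For the limits, $x\to-\infty$ gives $-c/\beta$ by the previous step. As $x\to\infty$, the inner integral in \eqref{eq:J-explicit} tends to $m_t^h-m_t^\ell$, bounded by $\mu_h-\mu_\ell$. Integrating, $r(\mu_h-\mu_\ell)\int_0^\infty e^{-\beta t}(1-e^{-\theta t})\,\dd t=r\theta(\mu_h-\mu_\ell)/(\beta(\beta+\theta))$, which gives \eqref{eq:lim-J}. Finally, $J'\to0$ at both ends by dominated convergence, since the bracket tends to $0$ in both limits.
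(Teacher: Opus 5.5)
Your proposal is correct. For parts (a), (c), the ODEs in (b), and the limits in (d) it follows the paper closely. The paper obtains the same formulas for $J_h'$ and $J_h''$ by differentiating under the integral. It derives \eqref{eq:ODE-J-h} by your first route: it verifies $\partial_t u=\Gamma_h u$ through the closed form $u=m_t\Phi(m_t/\sqrt{v_t})+\sqrt{v_t}\,\phi(m_t/\sqrt{v_t})$ and integrates by parts in $t$. It checks explicitly that the time derivative is $O(t^{-1/2})$ near $0$; in your version this follows from the integrability of the $J_h'$ and $J_h''$ integrands, since $\partial_t u=\Gamma_h u$.

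You genuinely differ in the regularity claims. The paper never differentiates the integral more than twice:
\begin{itemize}
\item It gets $\mathcal C^\infty(\R\setminus\{0\})$ by bootstrapping $J_h''=\frac{2}{\sigma^2}[\beta J_h-\theta(\mu_h-x)J_h'-rx^++c]$.
\item It gets $J\in\mathcal C^3(\R)$ from the difference equation \eqref{eq:ODE-Jh-Jl-h}. There the nonsmooth source $rx^+$ cancels, so $J''$ is expressed through $J$, $J'$ and $J_\ell'$, all of class $\mathcal C^1$.
\end{itemize}
You instead estimate the Gaussian kernel directly:
\begin{itemize}
\item For $x$ in a compact set away from $0$, you use the super-polynomial decay of $\phi^{(k)}(m_t/\sqrt{v_t})$ near $t=0$.
\item For $J'''$, you use the mean-value bound $|\phi'(m_t^h/\sqrt{v_t})-\phi'(m_t^\ell/\sqrt{v_t})|\le C(1-e^{-\theta t})/\sqrt{v_t}=O(\sqrt t)$ near $t=0$. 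This makes the integrand $O(t^{-1/2})$ uniformly in $x$.
\end{itemize}

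Both routes are sound. The paper's bootstrap is shorter, needs no new estimates once the ODE is known, and shows structurally why $J$ gains a derivative over $J_h$ and $J_\ell$. Your route does not depend on the ODE and produces an explicit integral formula for $J'''$. It exhibits the same cancellation at the kernel level, at the cost of writing out the locally uniform domination, which you correctly identify as the main technical point. Two further differences are cosmetic: you compute $\mathbb{E}[(m+\sqrt v Z)^+]$ by differentiating in $m$ rather than via $\mathbb{E}[Y^+]=\int_0^\infty \mathbb{P}(Y>y)\,\dd y$, and you get $J_h,J_\ell\to\infty$ from $G(m,v)\ge m^+$ rather than from monotone convergence.
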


\begin{remark}
\label{rem:lemma-1}
    \begin{enumerate}[$(a)$]
        \item It follows from \eqref{eq:ODE-J-h}-\eqref{eq:ODE-J-l} that we have the following ODEs
        \begin{align}
        &\Gamma_{\ell}J_{h}(x) - \beta J_h(x) + r x^+ - c=-\theta(\mu_{h}-\mu_{\ell})J_{h}'(x), \quad \text{and}
        \label{eq:ODE-J-h-Gamma ell} \\
        &\Gamma_{h}J_{\ell}(x) - \beta J_\ell(x) + r x^+ = \theta(\mu_{h}-\mu_{\ell})J_{\ell}'(x). \label{eq:ODE-J-l-Gamma h}
        \end{align}
        \item Subtracting \eqref{eq:ODE-J-l-Gamma h} from \eqref{eq:ODE-J-h} and subtracting \eqref{eq:ODE-J-l} from \eqref{eq:ODE-J-h-Gamma ell}, respectively, yields
        \begin{align}
               &\Gamma_{h}J(x) - \beta J(x)-c+\theta(\mu_{h}-\mu_{\ell})J_{\ell}'(x)=0,\quad \text{and}\label{eq:ODE-Jh-Jl-h}\\
               &\Gamma_{\ell}J(x) - \beta J(x)-c+\theta(\mu_{h}-\mu_{\ell})J_{h}'(x)=0.\label{eq:ODE-Jh-Jl-ell}
        \end{align}
    \end{enumerate}
\end{remark}

\subsection{\texorpdfstring{$(d,u)$}{(d,u)}-Threshold Policies}

Now we define the $(d,u)$-threshold policies.

\begin{definition}[$(d,u)$-Threshold Policies]
\label{def:du-policy}
An admissible policy $\pi$ is called a $(d,u)$-threshold policy with $-\infty \le d < u < \infty$ or $-\infty < d < u \le \infty$ if a switch from $\mu_h$ to $\mu_\ell$ is triggered exactly when the state process $X$ falls to or below $d$, and a switch from $\mu_\ell$ to $\mu_h$ is triggered exactly when the state process $X$ rises to or above $u$. 
\end{definition}

Notice that we naturally allow for degenerate single-threshold cases when $d = -\infty$ or $u = \infty$. For instance, under a $(d, \infty)$ policy, the system will never switch to the high mode $\mu_h$. Consequently: 
If the initial mode is $\mu(0-) = \mu_\ell$, the system remains in mode $\mu_\ell$ permanently (reducing to a no-switching policy);
If the initial mode is $\mu(0-) = \mu_h$, the policy forces an instantaneous switch to $\mu_\ell$ at time $t=0$ if the initial state $X(0) \le d$. Conversely, if $X(0)>d$, the system remains in mode $\mu_h$ until $\tau_1=\inf\{t>0:X(t)\le d\}$.
Since the nondegenerate mean-reverting OU process reaches every finite level almost surely, $\tau_1<\infty$ almost surely. At $\tau_1$, the system switches to $\mu_\ell$ and remains there indefinitely.

We next provide the expected profit function under a given $(d,\infty)$-threshold policy with $-\infty<d<\infty$, $(-\infty,u)$-threshold policy with $-\infty<u<\infty$, and $(d,u)$-threshold policy with $-\infty<d<u<\infty$, respectively. Before that, we first define four auxiliary functions and summarize some properties for these functions in the following lemma.
Denote
\begin{align}
    \psi_{+}^h(x)&=\int_{0}^{\infty}t^{\frac{\beta}{\theta}-1}\exp\Big(-\frac{t^{2}}{2}+\frac{\sqrt{2\theta}}{\sigma} (x-\mu_h)t\Big)\dd t,
    \label{eq:psi-h+}\\
    \psi_{-}^h(x)&=\int_{0}^{\infty}t^{\frac{\beta}{\theta}-1}\exp\Big(-\frac{t^{2}}{2}-\frac{\sqrt{2\theta}}{\sigma} (x-\mu_h)t\Big)\dd t,
     \label{eq:psi-h-}\\
    \psi_{+}^\ell(x)&=\int_{0}^{\infty}t^{\frac{\beta}{\theta}-1}\exp\Big(-\frac{t^{2}}{2}+\frac{\sqrt{2\theta}}{\sigma} (x-\mu_\ell)t\Big)\dd t,\quad \text{and} \label{eq:psi-l+}\\
    \psi_{-}^\ell(x)&=\int_{0}^{\infty}t^{\frac{\beta}{\theta}-1}\exp\Big(-\frac{t^{2}}{2}-\frac{\sqrt{2\theta}}{\sigma} (x-\mu_\ell)t\Big)\dd t. \label{eq:psi-l-}
\end{align}
All four integrals are finite. Moreover, differentiation under the integral sign is justified by dominated convergence, locally uniformly in $x$: near $t=0$, the integrands of the $n$th derivatives are bounded by a constant times $t^{\beta/\theta+n-1}$, while at infinity the Gaussian factor dominates every polynomial and linear exponential factor. Consequently, all four functions belong to
$\mathcal{C}^{\infty}(\mathbb{R})$.

The proof of Lemma \ref{lem:psi} is given in Appendix \ref{app-sub:lem-psi}.

\begin{lemma}
    \label{lem:psi}
    \begin{enumerate}[$(a)$]
        \item The functions $\psi_{+}^h(x)$ and $\psi_{-}^h(x)$ defined in \eqref{eq:psi-h+}-\eqref{eq:psi-h-} are two fundamental solutions of 
        \begin{align}
        \label{eq:ODE-psi-h}
            \Gamma_{h}\psi(x)-\beta \psi(x)=0,
        \end{align}
        Also, the functions $\psi_{+}^\ell(x)$ and $\psi_{-}^\ell(x)$ defined in \eqref{eq:psi-l+}-\eqref{eq:psi-l-} are two fundamental solutions of 
        \begin{align}
        \label{eq:ODE-psi-l}
           \Gamma_{\ell}\psi(x)-\beta \psi(x)=0.
        \end{align}
        \item  We have $(\psi_{-}^h)'(x)<0$, $(\psi_{+}^\ell)'(x)>0$, $(\psi_{-}^h)''(x)>0$, $(\psi_{+}^\ell)''(x)>0$ for $x\in\R$.
        \item We have the following limits:
        \begin{align}
            \lim_{x\to-\infty}\psi_-^h(x)&=\infty, \quad \lim_{x\to-\infty}(\psi_-^h)'(x)=-\infty, \label{eq:limit-psi-h-1}\\
            \lim_{x\to\infty} \psi_{-}^h(x)&=\lim_{x\to\infty} (\psi_{-}^h)'(x)=0, \label{eq:limit-psi-h-2}\\
             \lim_{x\to-\infty}\psi_+^{\ell}(x)&=\lim_{x\to-\infty}(\psi_+^{\ell})'(x)=0, \label{eq:limit-psi-ell-1}\\
            \lim_{x\to\infty} \psi_{+}^\ell(x)&=\lim_{x\to\infty} (\psi_{+}^\ell)'(x)=\infty,\label{eq:limit-psi-ell-2} 
        \end{align}
        \item $(\psi_-^h)'(x)$ is Lipschitz continuous on $(d,\infty)$ for any finite $d\in(-\infty,\infty)$ and $(\psi_+^{\ell})'(x)$ is Lipschitz continuous on $(-\infty,u)$ for any finite $u\in(-\infty,\infty)$.
    \end{enumerate}
\end{lemma}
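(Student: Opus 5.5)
Throughout, I would substitute $y=\frac{\sqrt{2\theta}}{\sigma}(x-\mu)$ with $\mu\in\{\mu_h,\mu_\ell\}$ and write $\gamma=\beta/\theta$. Each of the four functions then takes the form
\[
F_\pm(y)=\int_0^\infty t^{\gamma-1}e^{-t^2/2\pm yt}\,\dd t .
\]
All four parts reduce to statements about $F_\pm$. Moreover $F_-(y)=F_+(-y)$, so it suffices to treat $F_+$ and then reflect.

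\emph{Part (a).} Under the substitution, $\Gamma_h-\beta$ (respectively $\Gamma_\ell-\beta$) becomes $\theta\bigl(F''-yF'-\gamma F\bigr)$, so I need $F_\pm''\mp yF_\pm'... $ more precisely $F''-yF'-\gamma F=0$. Differentiating under the integral, which is already justified in the text, gives
\[
F_+''-yF_+'-\gamma F_+=\int_0^\infty\bigl(t^{\gamma+1}-yt^{\gamma}-\gamma t^{\gamma-1}\bigr)e^{-t^2/2+yt}\,\dd t .
\]
The integrand equals $-\frac{\dd}{\dd t}\bigl(t^{\gamma}e^{-t^2/2+yt}\bigr)$. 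Its integral vanishes because the boundary term is zero at $t=0$ (since $\gamma>0$) and at $t=\infty$. The same computation handles $F_-$, using the sign change $y\mapsto-y$.

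For linear independence, $F_+$ and $F_-$ are positive, and $F_+'=\int t^{\gamma}e^{\cdots}>0$ while $F_-'<0$. Hence the Wronskian $F_+F_-'-F_+'F_-$ is strictly negative, so the two solutions are fundamental. Translating back to the original variables gives the statement for both pairs.

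\emph{Part (b).} The derivatives are
\[
(\psi_-^h)'=-\tfrac{\sqrt{2\theta}}{\sigma}\int t^{\gamma}e^{\cdots},\qquad (\psi_-^h)''=\tfrac{2\theta}{\sigma^2}\int t^{\gamma+1}e^{\cdots},
\]
and the analogous formulas hold for $\psi_+^\ell$. These integrands are strictly positive, so the sign claims follow immediately.

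\emph{Part (c).} For $\psi_+^\ell$ as $x\to-\infty$ (i.e. $y\to-\infty$), the integrands $t^{\gamma-1}e^{-t^2/2+yt}$ and $t^{\gamma}e^{-t^2/2+yt}$ decrease monotonically to $0$ pointwise. Each is dominated by its value at $y=0$, which is integrable, so dominated convergence gives limit $0$. As $y\to\infty$, Fatou's lemma (or monotone convergence), applied to integrands increasing to $+\infty$ pointwise on $t>0$, gives $+\infty$ for both the function and its derivative. The limits for $\psi_-^h$ follow by reflection; for the derivative the sign flips, giving the limit $-\infty$.

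\emph{Part (d).} Lipschitz continuity of $(\psi_-^h)'$ on $(d,\infty)$ follows from the boundedness of $(\psi_-^h)''$ there. Since $(\psi_-^h)''=\frac{2\theta}{\sigma^2}\int t^{\gamma+1}e^{-t^2/2-yt}\,\dd t$ is positive and decreasing in $x$, it is bounded on $(d,\infty)$ by its value at $x=d$, which is finite. The symmetric argument handles $(\psi_+^\ell)'$ on $(-\infty,u)$, where $(\psi_+^\ell)''$ is increasing in $x$ and hence bounded by its value at $u$.

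The only mildly delicate step is the integration-by-parts identity in (a), specifically checking that the boundary term vanishes at $t=0$ when $\gamma<1$. This holds because $t^{\gamma}\to0$ for every $\gamma>0$.
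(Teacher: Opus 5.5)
Your proposal is correct and follows essentially the same route as the paper: differentiation under the integral sign for the sign claims, monotone and dominated convergence for the limits, and a positive, monotone (hence bounded) second derivative for the Lipschitz claims. The only difference is in part $(a)$, where you supply the explicit integration-by-parts verification of the ODE and a Wronskian argument for linear independence; the paper simply asserts that substitution works and cites Ngo--Pham for fundamentality, so your version is more self-contained.
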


\begin{remark}
    \label{rem:lemma-2}
    It follows from \eqref{eq:ODE-psi-h} that $\psi_{-}^h(x)$ satisfies
        \begin{align}
            \Gamma_{\ell}\psi_{-}^h(x)-\beta \psi_{-}^h(x)=-\theta(\mu_{h}-\mu_{\ell})(\psi_{-}^h)'(x), 
            \label{eq:ODE-psi-h-with-ell}
                \end{align}
    and it follows from \eqref{eq:ODE-psi-l} that $\psi_{+}^{\ell}(x)$ satisfies
        \begin{align}
            \Gamma_{h}\psi_{+}^{\ell}(x)-\beta \psi_{+}^{\ell}(x)=\theta(\mu_{h}-\mu_{\ell})(\psi_{+}^{\ell})'(x). 
            \label{eq:ODE-psi-ell-with-h}
        \end{align}
\end{remark}

Recall that $J_{h}(\cdot)$ is given by \eqref{eq:J-h}, $J_\ell(\cdot)$ is given by \eqref{eq:J-l}, and $J(\cdot)$ is given by \eqref{eq:J}. We will, in the following proposition,  provide the expected profit function under a given $(d,\infty)$-threshold policy, $(-\infty,u)$-threshold policy, and $(d,u)$-threshold policy, respectively.

\begin{proposition}
    \label{prop:threshold-policy}
    \begin{enumerate}[$(a)$]
        \item The expected profit function under a given $(d,\infty)$-threshold policy with $d\in(-\infty,\infty)$ is given by
        \begin{align}
            \mathcal{J}((x,\mu_\ell),(d,\infty))&=J_{\ell}(x),\label{eq:profit-threshold-1-1}\\
            \mathcal{J}((x,\mu_h),(d,\infty))&=
            \begin{cases}
                J_{\ell}(x)-K_{\ell} &\text{for $x\leq d$},\\
                J_h(x) + \frac{-J(d)-K_{\ell}}{\psi_{-}^h(d)}\psi_{-}^h(x) &\text{for $x>d$}.
            \end{cases}
            \label{eq:profit-threshold-1-2}
        \end{align}
        \item The expected profit function under a given $(-\infty,u)$-threshold policy with $u\in (-\infty,\infty)$ is given by
        \begin{align}   
            \mathcal{J}((x,\mu_h),(-\infty,u))&=J_{h}(x),\label{eq:profit-threshold-2-1}\\
            \mathcal{J}((x,\mu_\ell),(-\infty,u))&=
            \begin{cases}
                J_{h}(x)-K_h &\text{for $x\geq u$},\\
                J_\ell(x) + \frac{J(u)-K_h}{\psi_{+}^\ell(u)}\psi_{+}^\ell(x) &\text{for $x<u$}.
            \end{cases}
            \label{eq:profit-threshold-2-2}
        \end{align}
        \item The expected profit function under a given $(d,u)$-threshold policy with $d,u\in(-\infty,\infty)$ and $d<u$ is given by 
        \begin{align}
            \label{eq:profit-threshold-3-1}
            \mathcal{J}((x,\mu_h),(d,u))&=
            \begin{cases}
                J_\ell(x) + B(d,u)\psi_{+}^\ell(x)-K_{\ell} & \text{for $x\leq d$},\\
                J_h(x)+A(d,u)\psi_{-}^h(x)& \text{for $x>d$},
            \end{cases}\\
            \mathcal{J}((x,\mu_\ell),(d,u))&=
            \begin{cases}
                J_h(x)+A(d,u)\psi_{-}^h(x)-K_h &\text{for $x\geq u$},\\
                J_\ell(x) + B(d,u)\psi_{+}^\ell(x) &\text{for $x<u$},
            \end{cases}
            \label{eq:profit-threshold-3-2}
        \end{align}
        with
        \begin{align}
            A(d,u) &= \frac{-(J(d) + K_{\ell})\psi_+^{\ell}(u) + (J(u) - K_h)\psi_+^{\ell}(d)}{\psi_-^h(d)\psi_+^{\ell}(u) - \psi_+^{\ell}(d)\psi_-^h(u)}, \label{eq:A}\\
            B(d,u) &= \frac{-(J(d) + K_{\ell})\psi_-^h(u) + (J(u) - K_h)\psi_-^h(d)}{\psi_-^h(d)\psi_+^{\ell}(u) - \psi_+^{\ell}(d)\psi_-^h(u)}.\label{eq:B}
        \end{align}
        where $(A(d,u),B(d,u))$ is the unique solution of 
        \begin{align*}
            A \psi_-^h(d) - B \psi_+^{\ell}(d) +J(d) =- K_{\ell}\quad \text{and}\quad 
            A \psi_-^h(u) - B \psi_+^{\ell}(u) + J(u) =  K_h.
        \end{align*}
    \end{enumerate}
\end{proposition}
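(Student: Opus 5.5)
The plan is to compute each profit function by conditioning on the first switching time, with the strong Markov property and the standard Laplace transform of first passage times of an OU process. The key tool is the following representation. For the mode-$\mu_h$ diffusion $X^h$ started at $x>d$, let $\tau_d=\inf\{t\ge0:X^h(t)\le d\}$. Then
\[
\E_x\bigl[e^{-\beta\tau_d}\bigr]=\frac{\psi_-^h(x)}{\psi_-^h(d)} .
\]
Symmetrically, for $X^\ell$ started at $x<u$, with $\tau_u=\inf\{t\ge0:X^\ell(t)\ge u\}$,
\[
\E_x\bigl[e^{-\beta\tau_u}\bigr]=\frac{\psi_+^\ell(x)}{\psi_+^\ell(u)} .
\]
I will prove these by applying It\^o's formula (Lemma~\ref{lem:psi}(a)) to $e^{-\beta t}\psi_-^h(X^h(t))$ stopped at $\tau_d\wedge n$. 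Since $\psi_-^h$ is bounded on $[d,\infty)$ by Lemma~\ref{lem:psi}(b),(c), the stopped process is a bounded martingale. Letting $n\to\infty$ and using $\tau_d<\infty$ a.s. and $\psi_-^h(\infty)=0$ gives the first identity. The second is analogous, using that $\psi_+^\ell$ is increasing and bounded on $(-\infty,u]$.

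\emph{Parts (a) and (b).} Under a $(d,\infty)$ policy started in $\mu_\ell$ no switch ever occurs, so the profit is $J_\ell(x)$. Started in $\mu_h$ with $x\le d$, there is an immediate switch, giving $J_\ell(x)-K_\ell$. For $x>d$ the process follows $X^h$ until $\tau_d$, at which point $X(\tau_d)=d$. Write
\[
J_h(x)=\E_x\Bigl[\int_0^{\tau_d}e^{-\beta t}\bigl(r(X^h)^+-c\bigr)\dd t\Bigr]+\E_x\bigl[e^{-\beta\tau_d}\bigr]\,J_h(d),
\]
which follows from the strong Markov property. The profit is the same running integral plus $\E_x[e^{-\beta\tau_d}]\,(J_\ell(d)-K_\ell)$. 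Subtracting the two expressions gives $J_h(x)+\frac{-J(d)-K_\ell}{\psi_-^h(d)}\psi_-^h(x)$. Integrability of the running reward, needed for the splitting, follows from $\E(X^h(t))^+\le |x|+|\mu_h|+\sqrt{v_t}$. Part (b) is symmetric, using $\psi_+^\ell$ and $u$.

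\emph{Part (c).} Denote the two profit functions by $W_h(x)=\mathcal{J}((x,\mu_h),(d,u))$ and $W_\ell(x)=\mathcal{J}((x,\mu_\ell),(d,u))$. The first step is to show they are finite; the policy is admissible because switches alternate and consecutive switch times differ by OU crossing times from $d$ to $u$, which are i.i.d. and positive, so $\tau_n\to\infty$. Dominating the revenue by the no-switch revenue and the costs by $\sum_n \E e^{-\beta\tau_n}$ bounds $|W_h|,|W_\ell|$ by a finite quantity. For $x>d$ in mode $h$, the same strong Markov argument gives $W_h(x)=J_h(x)+\frac{\psi_-^h(x)}{\psi_-^h(d)}\bigl(W_\ell(d)-K_\ell-J_h(d)\bigr)$, which has the form $J_h+A\psi_-^h$. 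For $x<u$ in mode $\ell$, $W_\ell(x)=J_\ell(x)+\frac{\psi_+^\ell(x)}{\psi_+^\ell(u)}\bigl(W_h(u)-K_h-J_\ell(u)\bigr)$, which has the form $J_\ell+B\psi_+^\ell$. The immediate-switch regions give $W_h=W_\ell-K_\ell$ on $x\le d$ and $W_\ell=W_h-K_h$ on $x\ge u$.

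Evaluating the mode-$\ell$ formula at $x=d<u$ and the mode-$h$ formula at $x=u>d$ closes the system. This yields exactly the two linear equations $A\psi_-^h(d)-B\psi_+^\ell(d)+J(d)=-K_\ell$ and $A\psi_-^h(u)-B\psi_+^\ell(u)+J(u)=K_h$. The determinant $\psi_-^h(d)\psi_+^\ell(u)-\psi_+^\ell(d)\psi_-^h(u)$ is strictly positive, because $\psi_-^h$ is positive and decreasing while $\psi_+^\ell$ is positive and increasing (Lemma~\ref{lem:psi}(b)). Hence the solution is unique, and Cramer's rule gives \eqref{eq:A}--\eqref{eq:B}.

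The main obstacle is the finiteness of $W_h(d)$ and $W_\ell(u)$ together with the justification of the strong Markov decomposition. Without these, the linear system could be satisfied only formally, with both sides infinite. I will handle this by the a priori bound described above, noting that $\E e^{-\beta\tau_n}\le q^{\lfloor n/2\rfloor}$ with $q=\frac{\psi_+^\ell(d)}{\psi_+^\ell(u)}<1$.
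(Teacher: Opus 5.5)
Your proposal is correct and follows the paper's proof essentially step for step: a strong Markov decomposition at the first switching time, subtraction of the matching decomposition of $J_h$ (or $J_\ell$), the first-passage transforms $\psi_-^h(x)/\psi_-^h(d)$ and $\psi_+^\ell(x)/\psi_+^\ell(u)$, and Cramer's rule on the $2\times 2$ system, whose determinant is positive. The only differences are that you derive the Laplace transforms by optional stopping instead of citing Dayanik--Karatzas, and you spell out the finiteness and admissibility arguments that the paper only sketches; there, for an initial mode $\mu_\ell$ the bound should read $q^{\lfloor (n-1)/2\rfloor}$ rather than $q^{\lfloor n/2\rfloor}$, which still gives a convergent geometric series.
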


\begin{proof}[Proof of Proposition \ref{prop:threshold-policy}]
    $(a)$ Under a given $(d,\infty)$-threshold policy with $-\infty<d<\infty$, if the initial mode is $\mu_\ell$, the system will keep mode $\mu_{\ell}$ forever, i.e., 
\[
\mathcal{J}((x,\mu_{\ell}),(d,\infty))=J_{\ell}(x),
\]
which is \eqref{eq:profit-threshold-1-1}.
If the initial mode is $\mu_{h}$ and the initial state satisfies $x\leq d$, the system will immediately switch to mode $\mu_\ell$ and stay there, i.e., 
\begin{equation*}
    \mathcal{J}((x,\mu_h),(d,\infty))=J_{\ell}(x)-K_{\ell},
\end{equation*}
which is the first line in \eqref{eq:profit-threshold-1-2}. 

It remains to show the second line in \eqref{eq:profit-threshold-1-2}, i.e., the case when $x>d$ and the initial mode is $\mu_h$. 
Under the $(d,\infty)$-threshold policy, the system operates in mode $\mu_h$ until the process, starting from an initial state $x>d$, first hits $d$, and thereafter switches to mode $\mu_\ell$ permanently. 
Let $\tau_d=\inf\{t\geq0: X^{h}(t)\leq d\}$ be the first hitting time. By the strong Markov property, we have
\begin{align*}
    \mathcal{J}((x,\mu_h),(d,\infty))
    =& \E_{(x,\mu_h)}\left[
    \int_0^{\tau_d}e^{-\beta t}
    \left(r(X^{h}(t))^{+}-c\right)\dd t
    \right] \\
    & + \E_{(x,\mu_h)}\left[e^{-\beta\tau_d}\right]
    \left(J_\ell(d)-K_\ell\right)\quad \text{and}\\
    J_h(x)
    =& \E_{(x,\mu_h)}\left[
    \int_0^{\tau_d}e^{-\beta t}
    \left(r(X^{h}(t))^{+}-c\right)\dd t
    \right] 
    + \E_{(x,\mu_h)}\left[e^{-\beta\tau_d}\right]J_h(d),
\end{align*}
where $J_{h}$ and $J_{\ell}$ denote the expected profit functions under the no-switching policies with modes $\mu_h$ and $\mu_\ell$, respectively.
Hence
\begin{align}
\label{eq:V-d-2-app}
    \mathcal{J}((x,\mu_h),(d,\infty))= J_h(x)+\E_{(x,\mu_h)}\left[e^{-\beta \tau_d}\right]\left(-J(d) - K_{\ell}\right),
\end{align}
where $J(d)=J_h(d)-J_{\ell}(d)$ (see \eqref{eq:J}).
For the Ornstein--Uhlenbeck process $X^h$, it follows from Equation (2.6) in \cite{dayanik2003optimal} that the Laplace transform of the first hitting time $\tau_d$ from an initial state $x > d$ is given by 
\begin{align}
\label{eq:E-tau-app}
    \E_{x}\left[e^{-\beta \tau_{d}}\right] = \frac{\psi_{-}^h(x)}{\psi_{-}^h(d)},
\end{align}
where $\psi_{-}^h$ is defined in \eqref{eq:psi-h-}.
Combining \eqref{eq:V-d-2-app} with \eqref{eq:E-tau-app}, we obtain the second line in \eqref{eq:profit-threshold-1-2}.

($b$) Under a given $(-\infty,u)$-threshold policy with $-\infty<u<\infty$, if the initial mode is $\mu_h$, the system will keep mode $\mu_{h}$ forever, i.e., 
\begin{align*}
    \mathcal{J}((x,\mu_h),(-\infty,u))=J_{h}(x).
\end{align*}

If the initial mode is $\mu_{\ell}$ and the initial state satisfies $x\geq u$, the system will immediately switch to mode $\mu_h$ and stay there, i.e., 
\[
\mathcal{J}((x,\mu_\ell),(-\infty,u))=J_{h}(x)-K_h.
\]

If the initial mode is $\mu_{\ell}$ and the initial state satisfies $x<u$,  the system operates in mode $\mu_\ell$ until the process, starting from an initial state $x<u$, first hits $u$, and thereafter switches to mode $\mu_h$ permanently. 
Let $\tau_u=\inf\{t\geq0: X^{\ell}(t)\geq u\}$ be the first hitting time. By the strong Markov property, we have
\begin{align*}
    \mathcal{J}((x,\mu_\ell),(-\infty,u))
    =& \E_{(x,\mu_\ell)}\left[ \int_0^{\tau_{u}}e^{-\beta t}\left( r(X^{\ell}(t))^{+} \right)\dd t \right] \\
    & + \E_{(x,\mu_\ell)}\left[e^{-\beta \tau_u}\right]\left(J_h(u) - K_h\right) \quad \text{and}\\[4pt]
    J_\ell(x)
    = &\E_{(x,\mu_\ell)}\left[ \int_0^{\tau_{u}}e^{-\beta t}\left( r(X^{\ell}(t))^{+} \right)\dd t \right] 
     + \E_{(x,\mu_\ell)}\left[e^{-\beta \tau_u}\right] J_\ell(u).
\end{align*}
Then $\mathcal{J}((x,\mu_\ell),(-\infty,u))= J_\ell(x)+\E_{(x,\mu_\ell)}\left[e^{-\beta \tau_u}\right] \left(J(u)-K_h\right)$,
which, together with $\E_{(x,\mu_\ell)}\left[e^{-\beta \tau_u}\right]=\psi_{+}^\ell(x)/\psi_{+}^\ell(u)$ (see also \cite{dayanik2003optimal}), implies the second line in \eqref{eq:profit-threshold-2-2}.

($c$) Recall that under a given $(d, u)$-threshold policy with $-\infty < d < u < \infty$, the system switches from mode $\mu_h$ to $\mu_{\ell}$ when the state falls to $d$, and switches from $\mu_{\ell}$ to $\mu_h$ when the state rises to $u$.
For fixed finite thresholds $d<u$, the strong Markov property and the discounted first-passage identities imply that the expected discounted switching cost is bounded by a convergent geometric series. Together with the integrability of the discounted running payoff, this ensures that the policy profits below are finite.

We first consider the case where the initial mode is $\mu_h$. If the initial state satisfies $x \le d$, the system immediately switches to mode $\mu_{\ell}$. In this case, the expected profit is:
\begin{equation}
\label{eq:prop22c_h_immediate}
    \mathcal{J}((x, \mu_h), (d, u)) = \mathcal{J}((x, \mu_{\ell}), (d, u)) - K_{\ell}, \quad \text{for } x \le d.
\end{equation}

If the initial state satisfies $x > d$, the system operates in mode $\mu_h$ until the process first hits $d$. Let $\tau_d = \inf\{t \ge 0 : X^{h}(t) \le d\}$ be this first hitting time. By the strong Markov property, the expected discounted net profit is
\begin{align*}
    \mathcal{J}((x, \mu_h), (d, u)) &= \mathbb{E}_{(x,\mu_h)} \left[ \int_0^{\tau_d} e^{-\beta t} (r(X^{h}(t))^+ - c) dt \right] \\
    &\quad+ \mathbb{E}_{(x,\mu_h)} \left[ e^{-\beta \tau_d} \right] \left( \mathcal{J}((d, \mu_{\ell}), (d, u)) - K_{\ell} \right).
\end{align*}
Similarly, the expected profit $J_h(x)$ under the no-switching policy  can be decomposed as
\begin{equation*}
    J_h(x) = \mathbb{E}_{(x,\mu_h)} \left[ \int_0^{\tau_d} e^{-\beta t} (r(X^{h}(t))^+ - c) dt \right] + \mathbb{E}_{(x,\mu_h)} \left[ e^{-\beta \tau_d} \right] J_h(d).
\end{equation*}
Subtracting the second equation from the first yields
\begin{equation*}
    \mathcal{J}((x, \mu_h), (d, u)) - J_h(x) = \mathbb{E}_{(x,\mu_h)} \left[ e^{-\beta \tau_d} \right] \left( \mathcal{J}((d, \mu_{\ell}), (d, u))  - J_h(d)- K_{\ell} \right).
\end{equation*}
Substituting \eqref{eq:E-tau-app} into the equation above, we obtain
\begin{equation}
\label{eq:prop22c_h_cont}
    \mathcal{J}((x, \mu_h), (d, u)) = J_h(x) + \frac{\mathcal{J}((d, \mu_{\ell}), (d, u)) - J_h(d)- K_{\ell}}{\psi_-^h(d)} \psi_-^h(x), \quad \text{for } x > d.
\end{equation}
Combining \eqref{eq:prop22c_h_immediate} and \eqref{eq:prop22c_h_cont} yields 
\begin{align}
\label{eq:prop22V-1}
    \mathcal{J}((x, \mu_h), (d, u)) = 
    \begin{cases}
        \mathcal{J}((x, \mu_{\ell}), (d, u))-K_{\ell} & \text{for $x\leq d$},\\
        J_h(x) + \frac{\mathcal{J}((d, \mu_{\ell}), (d, u)) - J_h(d)- K_{\ell}}{\psi_-^h(d)} \psi_-^h(x)& \text{for $x>d$}.
    \end{cases}
\end{align}
    
By a completely symmetric argument, if the initial mode is $\mu_{\ell}$, an immediate switch occurs if $x \ge u$, yielding:
\begin{equation}\label{eq:prop22c_l_immediate}
    \mathcal{J}((x, \mu_{\ell}), (d, u)) = \mathcal{J}((x, \mu_h), (d, u)) - K_h, \quad \text{for } x \ge u.
\end{equation}
For $x < u$, the system operates in $\mu_{\ell}$ until the first hitting time $\tau_u = \inf\{t \ge 0 : X^{\ell}(t) \ge u\}$. Using the strong Markov property and the Laplace transform $\mathbb{E}_{(x,\mu_{\ell})}[e^{-\beta \tau_u}] = \psi_+^\ell(x)/\psi_+^\ell(u)$, we have:
\begin{equation}\label{eq:prop22c_l_cont}
    \mathcal{J}((x, \mu_{\ell}), (d, u)) = J_\ell(x) + \frac{\mathcal{J}((u, \mu_h), (d, u)) - J_\ell (u) - K_h}{\psi_+^\ell(u)} \psi_+^\ell(x), \quad \text{for } x < u.
\end{equation}
Equations \eqref{eq:prop22c_l_immediate} and \eqref{eq:prop22c_l_cont} establish
\begin{align}
\label{eq:prop22V-2}
     \mathcal{J}((x, \mu_{\ell}), (d, u)) =
     \begin{cases}
         \mathcal{J}((x, \mu_h), (d, u)) - K_h, & \text{for } x \ge u,\\
         J_\ell(x) + \frac{\mathcal{J}((u, \mu_h), (d, u)) - J_\ell (u) - K_h}{\psi_+^\ell(u)} \psi_+^\ell(x), & \text{for } x < u.
     \end{cases}
\end{align}

To obtain a fully explicit characterization of the expected discounted profit functions in the continuation regions, we define the constants:
\begin{equation*}
    A = \frac{\mathcal{J}((d, \mu_{\ell}), (d, u)) - K_{\ell} - J_h(d)}{\psi_-^h(d)} \quad \text{and} \quad B = \frac{\mathcal{J}((u, \mu_h), (d, u)) - J_\ell(u) - K_h}{\psi_+^{\ell}(u)}.
\end{equation*}
From \eqref{eq:prop22c_l_cont}, evaluated at $x=d$, we know $\mathcal{J}((d, \mu_{\ell}), (d, u)) = J_{\ell}(d) + B \psi_+^{\ell}(d)$. Substituting this into the definition of $A$ gives:
\begin{equation} 
\label{eq:A-B-equation-proof-1}
    A \psi_-^h(d) = J_{\ell}(d) + B \psi_+^{\ell}(d) - J_h(d)- K_{\ell}  \implies A \psi_-^h(d) - B \psi_+^{\ell}(d) = -J(d) - K_{\ell}.
\end{equation}
Similarly, evaluating \eqref{eq:prop22c_h_cont} at $x=u$ gives $\mathcal{J}((u, \mu_h), (d, u)) = J_h(u) + A \psi_-^h(u)$. Substituting this into the definition of $B$ yields:
\begin{equation} 
\label{eq:A-B-equation-proof-2}
    B \psi_+^{\ell}(u) = J_h(u) + A \psi_-^h(u) - J_{\ell}(u) - K_h \implies -A \psi_-^h(u) + B \psi_+^{\ell}(u) = J(u) - K_h.
\end{equation}
Thus, equations \eqref{eq:A-B-equation-proof-1} and \eqref{eq:A-B-equation-proof-2} form a non-singular $2 \times 2$ linear system for $A$ and $B$. Let $\Delta(d, u) = \psi_-^h(d)\psi_+^{\ell}(u) - \psi_+^{\ell}(d)\psi_-^h(u)$. Since $d < u$, and considering the strict monotonicities of $\psi_-^h$ (decreasing) and $\psi_+^{\ell}$ (increasing); see Lemma \ref{lem:psi} ($b$), we have $\Delta(d, u) > 0$. By Cramer's rule, the solution to \eqref{eq:A-B-equation-proof-1} and \eqref{eq:A-B-equation-proof-2} is
\begin{align*}
    A &= \frac{-(J(d) + K_{\ell})\psi_+^{\ell}(u) + (J(u) - K_h)\psi_+^{\ell}(d)}{\Delta(d, u)}, \\
    B &= \frac{-(J(d) + K_{\ell})\psi_-^h(u) + (J(u) - K_h)\psi_-^h(d)}{\Delta(d, u)}.
\end{align*}
Substituting $A$ and $B$ back into \eqref{eq:prop22V-1} and \eqref{eq:prop22V-2} completely determines the expected profit functions under the $(d, u)$-threshold policy.
\end{proof}

\section{Main results}

\label{sec:mainresults}

In this section, we will present our main results in Theorem \ref{thm:main}, which shows the optimal policies under different conditions of system parameters. 

Let $V(x,\mu)$ denote the optimal profit function for initial state $x\in\mathbb{R}$ and initial mode $\mu\in\mathcal{U}=\{\mu_\ell,\mu_h\}$. The optimal policy consists of five cases, determined by the relative ordering of the four benchmark levels $-\frac{c}{\beta}$, $\frac{r\theta(\mu_{h}-\mu_{\ell})}{\beta(\beta+\theta)}-\frac{c}{\beta}$, $-K_{\ell}$, and $K_h$, where the inequalities $-\frac{c}{\beta}<\frac{r\theta(\mu_{h}-\mu_{\ell})}{\beta(\beta+\theta)}-\frac{c}{\beta}$ and $-K_{\ell}<K_h$ hold automatically. 
Figure~\ref{fig:optimal-policy-classification} illustrates these cases for fixed $K_{\ell}$ and $K_h$ as the other system parameters vary.

\begin{figure}[t]
      \centering
      \includegraphics[width=0.8\linewidth]{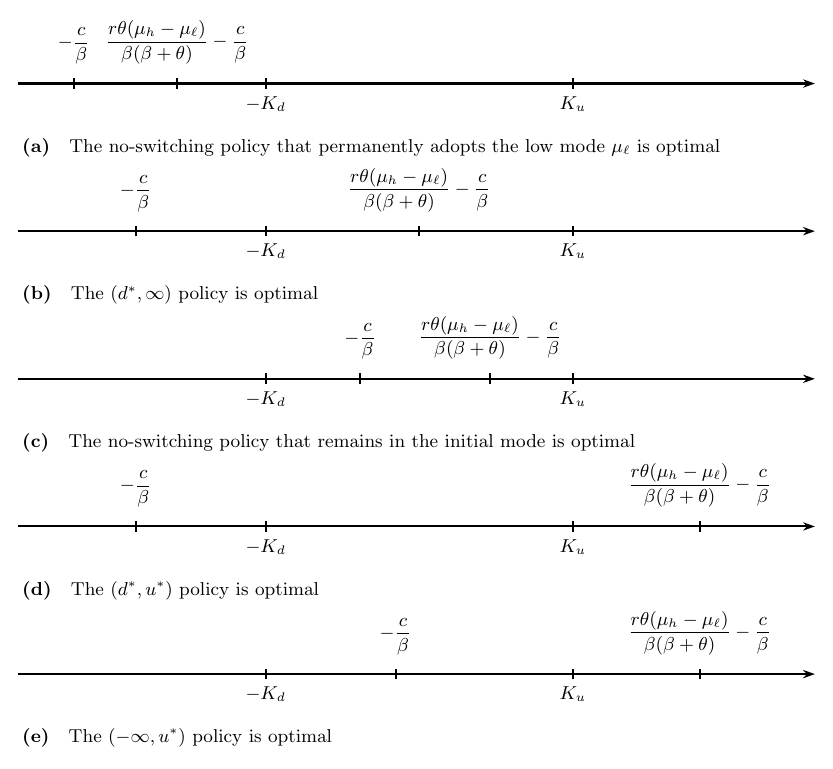}
      \caption{Classification of optimal policies across parameter regimes}
      \label{fig:optimal-policy-classification}
\end{figure}

Recall the expressions of $J_h$ in \eqref{eq:J-h}, $J_{\ell}$ in \eqref{eq:J-l}, $J$ in \eqref{eq:J}, $\psi_-^h$ in \eqref{eq:psi-h-}, and $\psi_+^\ell$ in \eqref{eq:psi-l+}.

\begin{theorem}
\label{thm:main}
    \begin{enumerate}[$(a)$]
        \item If $\frac{r\theta(\mu_{h}-\mu_{\ell})}{\beta(\beta+\theta)}-\frac{c}{\beta}\leq -K_{\ell}$, then the no-switching policy that permanently adopts the low mode $\mu_\ell$ strictly after time zero is optimal. The optimal profit functions are given by
        $V(x,\mu_h)=J_\ell(x)-K_{\ell}$ and $V(x,\mu_\ell)=J_\ell(x)$.
        \item If $-\frac{c}{\beta}<-K_{\ell} < \frac{r\theta(\mu_{h}-\mu_{\ell})}{\beta(\beta+\theta)}-\frac{c}{\beta}\leq K_h$, then the $(d^*,\infty)$ policy is optimal, with the optimal parameter $d^*$ uniquely determined by smooth-fit condition
        \begin{align}
        \label{eq:smooth-condition-1}
            J'(d^*) + \frac{-J(d^*) - K_{\ell}}{\psi_-^h(d^*)}(\psi_-^h)'(d^*)=0.
        \end{align}
        Furthermore, the optimal profit functions are given by 
        \begin{align*}
        V(x,\mu_h)=\mathcal{J}((x,\mu_h),(d^*,\infty))\quad \text{and}\quad V(x,\mu_\ell)=\mathcal{J}((x,\mu_\ell),(d^*,\infty)),
        \end{align*}
        as defined in \eqref{eq:profit-threshold-1-1}-\eqref{eq:profit-threshold-1-2}.        
        \item If $-K_{\ell}\leq -\frac{c}{\beta} < \frac{r\theta(\mu_{h}-\mu_{\ell})}{\beta(\beta+\theta)}-\frac{c}{\beta}\leq K_h$, then the no-switching policy that retains the initial mode forever is optimal, and optimal profit functions are given by $V(x,\mu_h)=J_h(x)$ and $V(x,\mu_\ell)=J_\ell(x)$.
        \item If $-\frac{c}{\beta}< -K_{\ell} <K_h< \frac{r\theta(\mu_{h}-\mu_{\ell})}{\beta(\beta+\theta)}-\frac{c}{\beta}$, then the $(d^*,u^*)$ policy with $-\infty<d^*<u^*<\infty$ is optimal, with the optimal parameters $d^*$ and $u^*$ uniquely determined by smooth-fit conditions 
        \begin{align}
            J'(d^*)& =-A(d^*,u^*)(\psi_-^h)'(d^*)+B(d^*,u^*)(\psi_{+}^\ell)'(d^*), \label{eq:smooth-condition-3}\\
            J'(u^*)& =-A(d^*,u^*)(\psi_-^h)'(u^*)+B(d^*,u^*)(\psi_{+}^\ell)'(u^*), \label{eq:smooth-condition-4}
        \end{align}
        where $A(d,u)$ and $B(d,u)$ are given by \eqref{eq:A}-\eqref{eq:B}. Furthermore, the optimal profit functions are given by 
        \begin{align*}
            V(x,\mu_h)=\mathcal{J}((x,\mu_h),(d^*,u^*))\quad \text{and}\quad V(x,\mu_\ell)=\mathcal{J}((x,\mu_\ell),(d^*,u^*)), 
        \end{align*}
        as defined in \eqref{eq:profit-threshold-3-1}-\eqref{eq:profit-threshold-3-2} with $d=d^*$ and $u=u^*$.
        \item If $-K_{\ell}\leq-\frac{c}{\beta} <K_h< \frac{r\theta(\mu_{h}-\mu_{\ell})}{\beta(\beta+\theta)}-\frac{c}{\beta}$, then the $(-\infty,u^*)$ policy is optimal, with the optimal parameter $u^*$ uniquely determined by smooth-fit condition
        \begin{align}
        \label{eq:smooth-condition-2}
            J'(u^*) - \frac{J(u^*) - K_h}{\psi_+^\ell(u^*)}(\psi_+^\ell)'(u^*)=0.
        \end{align}
        Furthermore, the optimal profit functions are given by 
        \begin{align*}
        V(x,\mu_h)=\mathcal{J}((x,\mu_h),(-\infty,u^*))\quad \text{and}\quad V(x,\mu_\ell)=\mathcal{J}((x,\mu_\ell),(-\infty,u^*)),
        \end{align*}
        as defined in \eqref{eq:profit-threshold-2-1}-\eqref{eq:profit-threshold-2-2} with $u=u^*$.
    \end{enumerate}
\end{theorem}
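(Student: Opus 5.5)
The plan is a classical verification argument built on the quasi-variational inequalities (QVI) for the pair $(V(\cdot,\mu_h),V(\cdot,\mu_\ell))$:
\begin{align*}
&\max\bigl\{\Gamma_h w_h-\beta w_h+rx^+-c,\; w_\ell-K_\ell-w_h\bigr\}=0,\\
&\max\bigl\{\Gamma_\ell w_\ell-\beta w_\ell+rx^+,\; w_h-K_h-w_\ell\bigr\}=0.
\end{align*}
The general verification lemma has the following form. Suppose $w_h,w_\ell\in\mathcal C^1(\R)$ are piecewise $\mathcal C^2$ with bounded second derivatives near the kinks, have linear growth, and satisfy the QVI. Then $w\ge \mathcal J(\cdot,\pi)$ for every $\pi\in\Pi$. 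To prove this, apply It\^o--Tanaka (Meyer--It\^o) to $e^{-\beta t}w_{\mu(t)}(X(t))$ between consecutive switching times. At each switch, the obstacle inequality $w_{\mu(\tau_n-)}\ge w_{\mu(\tau_n)}-K$ handles the jump. Localize, then let $t\to\infty$, using the linear growth of $w$ and the Gaussian moments of the OU process to kill the terminal term. Equality holds for the candidate threshold policy, because there the inequalities are equalities on the continuation and switching sets. Equivalently, each candidate already equals $\mathcal J$ of its policy by Proposition~\ref{prop:threshold-policy}. So in each case it suffices to show the candidate satisfies the QVI.

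In each case the candidate is the profit function of the proposed policy. Since $w_\ell-w_h$ vanishes where no switching occurs, the QVI reduces, via Remark~\ref{rem:lemma-1} and Remark~\ref{rem:lemma-2}, to scalar inequalities on $J$ and on the coefficients of $\psi^h_-$ and $\psi^\ell_+$. For instance, in (c) we need $-K_\ell\le J(x)\le K_h$ for all $x$. This follows from the monotonicity $J'>0$ and the limits \eqref{eq:lim-J}. In (a), $w_h=J_\ell-K_\ell$, and we must check two things. The first is $\Gamma_hJ_\ell-\beta J_\ell+rx^+-c\le 0$; by \eqref{eq:ODE-J-l-Gamma h} this reads $\theta(\mu_h-\mu_\ell)J'_\ell\le c$. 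The second is $J_h-J_\ell\le K_\ell$ (the switch-back inequality), which holds because $\sup J=\lim_{x\to\infty}J(x)\le -K_\ell$. For the first inequality, note that $J_\ell'$ increases to $r/(\beta+\theta)$, so the condition becomes $r\theta(\mu_h-\mu_\ell)/(\beta+\theta)\le c$. This does not follow from (a) as stated; indeed the hypothesis of (a) is exactly this inequality minus $\beta K_\ell$. So I would instead argue via \eqref{eq:ODE-Jh-Jl-h}: the quantity $\Gamma_hJ-\beta J-c+\theta(\mu_h-\mu_\ell)J'_\ell$ vanishes, and the boundedness of $J$ should reduce everything to the needed sign. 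Where this fails, a region of $\mu_h$-operation would be profitable, so the full QVI check in case (a) must be done carefully; I expect the limits in Lemma~\ref{lem:J} to suffice. Case (e) mirrors (b) under the obvious symmetry.

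For (b), (d), and (e), the first task is existence and uniqueness of thresholds. In (b), define
\[
F(d)=J'(d)\psi^h_-(d)-(J(d)+K_\ell)(\psi^h_-)'(d).
\]
Using the ODEs, compute $F'(d)=\frac{2\beta}{\sigma^2}\psi_-^h(d)\bigl[\text{explicit expression}\bigr]$; the explicit expression comes from \eqref{eq:ODE-Jh-Jl-h} multiplied by the Wronskian-type factor $e^{-\theta(x-\mu_h)^2/\sigma^2}$. Then show that $F$ has a unique zero, using the signs of $J+K_\ell$ at $\pm\infty$. These signs are exactly where the hypotheses enter: $-c/\beta<-K_\ell<\lim J$ means $J+K_\ell$ changes sign once. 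Given $d^*$, verify the QVI. In the continuation region $x>d^*$, the inequality $w_\ell-K_\ell\le w_h$ means that $H(x)=J(x)+K_\ell+A\psi^h_-(x)\ge0$. This follows because $H(d^*)=H'(d^*)=0$ and $H$ satisfies an ODE with a sign-definite forcing term on the relevant region. In the stopping region, one needs $\Gamma_h(J_\ell-K_\ell)-\beta(J_\ell-K_\ell)+rx^+-c\le0$ for $x\le d^*$. Finally, $w_h-K_h\le w_\ell$ must be checked everywhere, using $\lim J\le K_h$.

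Case (d) is the main obstacle: the coupled system \eqref{eq:smooth-condition-3}--\eqref{eq:smooth-condition-4} in two unknowns. I would first try a one-parameter reduction. For each $u$, solve the lower smooth-fit condition for $d=d(u)$ by the single-threshold argument above, with $K_\ell$ replaced by an effective cost that includes the $B\psi^\ell_+$ term. Then show that the remaining equation in $u$ is monotone, with opposite signs as $u\downarrow d(u)$ and as $u\to\infty$; here $K_h<\lim J$ is used. An alternative is to parametrize by the common constants $(A,B)$. Then each threshold is the unique tangency point of $J$ with $-A\psi^h_-+B\psi^\ell_+\mp K$, where convexity of $\psi^h_-,\psi^\ell_+$ (Lemma~\ref{lem:psi}(b)) gives unique tangencies, and one concludes by a monotone intersection argument for the two resulting curves in the $(A,B)$-plane. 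Once $(d^*,u^*)$ is obtained, the QVI check between the thresholds is analogous to case (b), applied on both sides.
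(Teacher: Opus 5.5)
Your overall architecture (a QVI verification theorem plus existence and uniqueness of thresholds) matches the paper's. However, the proposal does not prove part $(d)$, which is the technical core of the theorem.

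\emph{The one-parameter reduction is not well posed.} In \eqref{eq:smooth-condition-3} the coefficient $B(d,u)$ from \eqref{eq:B} depends on $d$ itself. So for fixed $u$ the ``effective cost'' $K_\ell-B\psi_+^\ell(d)$ is not a constant, and the single-threshold argument from $(b)$ does not apply. You also give no reason why the residual equation in $u$ should be monotone.

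\emph{The tangency alternative rests on a false premise.} For $A,B>0$, $-A\psi_-^h+B\psi_+^\ell$ is a difference of convex functions. Moreover $J$ is neither convex nor concave, since $J'>0$ with $J'\to0$ at $\pm\infty$. Convexity of $\psi_-^h,\psi_+^\ell$ therefore gives no uniqueness of tangency points, and the ``monotone intersection of two curves'' is only asserted.

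\emph{Facts needed for uniqueness and verification are missing.} Uniqueness over all finite solutions, and the verification in $(d)$, require two facts you never establish. First, every solution has $A,B>0$; this is needed so that $J_\ell''+B^*(\psi_+^\ell)''>0$ for $x<d^*$ and $J_h''+A^*(\psi_-^h)''>0$ for $x>u^*$. Second, $f_{A,B}=F_{A,B}'$ is negative--positive--negative, with $d,u$ as its only zeros and $f_{A,B}'(d)>0>f_{A,B}'(u)$.

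\emph{The idea you are missing.} Differentiate the ODEs to get $\Gamma_hf_{A,B}-(\beta+\theta)f_{A,B}=-\theta(\mu_h-\mu_\ell)[J_\ell''+B(\psi_+^\ell)'']$ and its $\Gamma_\ell$ analogue. These force every local minimum of $f_{A,B}$ to be positive whenever $A\ge0$ or $B\ge0$ (Lemma~\ref{lem:minimizer}), which yields both the sign pattern and $A,B>0$. The two value-matching conditions are then decoupled by the area function $S(A,B)=\int_\R(f_{A,B})^+\,\dd x=F_{A,B}(u)-F_{A,B}(d)$. The level set $S=K_\ell+K_h$ is a strictly decreasing curve $A=A_\kappa(B)$. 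Along it, $B\mapsto F_{A_\kappa(B),B}(d(A_\kappa(B),B))$ is strictly decreasing, so it equals $-K_\ell$ exactly once.

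\emph{Part $(a)$ contains an algebra slip that leads you to a false conclusion.} Since $w_h=J_\ell-K_\ell$, the term $-\beta w_h$ contributes $+\beta K_\ell$. By Remark~\ref{rem:lemma-1}$(a)$,
$\Gamma_hw_h-\beta w_h+rx^+-c=\theta(\mu_h-\mu_\ell)J_\ell'(x)+\beta K_\ell-c$.
Its supremum, $\frac{r\theta(\mu_h-\mu_\ell)}{\beta+\theta}+\beta K_\ell-c$, is nonpositive exactly by the hypothesis of $(a)$. So part $(a)$ is immediate.
\begin{itemize}
\item Your claim that the needed inequality ``does not follow'' is wrong. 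Even your mis-derived condition $\frac{r\theta(\mu_h-\mu_\ell)}{\beta+\theta}\le c$ follows from the hypothesis, because $K_\ell>0$.
\item The fallback via \eqref{eq:ODE-Jh-Jl-h} is unnecessary.
\item No condition $J\le K_\ell$ is needed, since $w_h-w_\ell\equiv-K_\ell$.
\end{itemize}

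\emph{Parts $(b)$ and $(e)$.} Your route here is a genuine alternative to the paper's area-function argument and can be completed, but not as you describe it. Set $g(x)=c-\beta K_\ell-\theta(\mu_h-\mu_\ell)J_\ell'(x)$. With the integrating factor, the derivative of $e^{-\theta(x-\mu_h)^2/\sigma^2}F(x)$, where $F$ is your residual, equals $\frac{2}{\sigma^2}e^{-\theta(x-\mu_h)^2/\sigma^2}\psi_-^h(x)g(x)$.
\begin{itemize}
\item Uniqueness of $d^*$ comes from the single sign change of $g$ at some $x_0$, not from that of $J+K_\ell$. Under $(b)$, $g$ decreases strictly from $c-\beta K_\ell>0$ to $c-\beta K_\ell-\frac{r\theta(\mu_h-\mu_\ell)}{\beta+\theta}<0$.
\item This also gives $d^*<x_0$, which is exactly what the stopping-region inequality requires.
\item The same $g$ is the forcing term in $\Gamma_hH-\beta H=g$, so it is not sign-definite on $(d^*,\infty)$.
\item Proving $0<H<\lim_{x\to\infty}J+K_\ell$ there therefore needs a maximum-principle argument. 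That argument uses $H''(d^*)=\frac{2}{\sigma^2}g(d^*)>0$ and rules out nonpositive local minima of $H$ to the right of $x_0$.
\end{itemize}
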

 
The smooth-fit condition \eqref{eq:smooth-condition-1} is obtained by the continuity of $\mathcal{J}_x((x,\mu_h),(d^*,\infty))$ at $d=d^*$, i.e., $\lim_{x\uparrow d^*}\mathcal{J}_x((x,\mu_h),(d^*,\infty))=\lim_{x\downarrow d^*}\mathcal{J}_x((x,\mu_h),(d^*,\infty))$, where we let $\mathcal{J}_x$ denote $\partial \mathcal{J}/\partial x$. Similarly, the smooth-fit conditions \eqref{eq:smooth-condition-3}-\eqref{eq:smooth-condition-4} are obtained by the continuity of $\mathcal{J}_x((x,\mu_h),(d^*,u^*))$ and $\mathcal{J}_x((x,\mu_\ell),(d^*,u^*))$ at $d^*$ and $u^*$, and the smooth-fit condition \eqref{eq:smooth-condition-2} is obtained by the continuity of $\mathcal{J}_x((x,\mu_\ell),(-\infty,u^*))$ at $u=u^*$.

We now provide an economic interpretation of Theorem~\ref{thm:main} using a single-switch benchmark. Recall that $J(x)=J_h(x)-J_\ell(x)$ represents the discounted profit difference between permanently operating in the high and low modes. If switching is allowed only once at the initial time, then switching from $\mu_h$ to $\mu_\ell$ yields a net gain of $-J(x)-K_\ell$, while switching from $\mu_\ell$ to $\mu_h$ yields a net gain of $J(x)-K_h$. These two quantities provide a benchmark for understanding which switching directions can potentially be profitable.

In case~($a$), the range of $J(x)$ lies below $-K_\ell$, so a downward switch is always profitable under this benchmark. The high mode cannot justify its additional operating cost, and the low mode is preferred permanently.

In case~($b$), $J(x)$ crosses $-K_\ell$ but remains below $K_h$. Hence, only downward switching can generate a potential gain, while upward switching is never attractive. This leads to a policy with downward switching only.

In case~($c$), the range of $J(x)$ is contained between $-K_\ell$ and $K_h$. Neither switching direction can recover its cost, and retaining the initial mode is optimal.

In case~($d$), both $K_{h}$ and $K_{\ell}$ lie within the range of $J(x)$. Downward switching becomes attractive at low states and upward switching becomes attractive at high states, leading to a two-sided switching policy with an inaction region.

In case~($e$), the range of $J(x)$ lies above $-K_\ell$ and extends beyond $K_h$. Only upward switching can be profitable, so the high mode is preferred at sufficiently large states.

The optimal switching thresholds should be distinguished from the states at which the permanent-mode profit differences equal the switching costs. Since immediate switching is available, the optimal profit functions satisfy
\[
-K_{\ell}
\leq V(x,\mu_h)-V(x,\mu_\ell)
\leq K_h.
\]
At an optimal downward switching boundary, the lower equality holds; at an optimal upward switching boundary, the upper equality holds. These comparisons involve optimal future profits, including subsequent switching opportunities. In particular, the smooth-fit condition in case~($b$) implies $J(d^*)<-K_{\ell}$, while that in case~($e$) implies $J(u^*)>K_h$. These strict inequalities illustrate the value of waiting: even when an immediate permanent change of mode has a positive net benefit, postponing that change may still be optimal. 

We next prove Theorem \ref{thm:main} in the following sections by two steps: First, in \S\ref{sec:parameters}, we will show the existence and uniqueness of the finite policy parameters proposed in Theorem \ref{thm:main} ($b$), ($d$) and ($e$), and then in \S\ref{sec:optimality}, we will prove the optimality of proposed policies in all five cases of Theorem \ref{thm:main}. 

\section{Existence and Uniqueness of Optimal Policy Parameters}
\label{sec:parameters}

In this section, we establish the existence and uniqueness of the policy parameters appearing in Theorem~\ref{thm:main}($b$), ($d$), and ($e$). To treat these three cases within a unified framework, define 
\begin{align}
F_{A,B}(x)&=A\psi_-^h(x)-B\psi_+^\ell(x)+J(x),\label{eq:F}\\
f_{A,B}(x)&=F_{A,B}'(x)
=A(\psi_-^h)'(x)-B(\psi_+^\ell)'(x)+J'(x),\label{eq:f}
\end{align}
for $x,A,B\in\mathbb{R}$.

For the single lower threshold in Theorem~\ref{thm:main}($b$), consider
\begin{equation}
\label{eq:optimal-equation-d}
F_{A,0}(d)=-K_{\ell},
\qquad
f_{A,0}(d)=0.
\end{equation}
Since $\psi_-^h(d)>0$, the first equation uniquely determines
\begin{equation}
\label{eq:A-2}
A=\frac{-J(d)-K_{\ell}}{\psi_-^h(d)}.
\end{equation}
Substitution into the second equation gives
\[
J'(d)+\frac{-J(d)-K_{\ell}}{\psi_-^h(d)}(\psi_-^h)'(d)=0,
\]
which is exactly \eqref{eq:smooth-condition-1}. 
Conversely, any solution $d$ of \eqref{eq:smooth-condition-1}, together with the coefficient $A$ defined above, satisfies
\eqref{eq:optimal-equation-d}. Thus the two formulations are equivalent.

For the two finite thresholds in Theorem~\ref{thm:main}($d$), consider the system
\begin{align}
F_{A,B}(d)&=-K_{\ell},\qquad F_{A,B}(u)=K_h,
\label{eq:optimal-equation-du-1}\\
f_{A,B}(d)&=0,\qquad f_{A,B}(u)=0.
\label{eq:optimal-equation-du-2}
\end{align}
The equations in \eqref{eq:optimal-equation-du-1} are called \textit{value-matching equations}, and those in \eqref{eq:optimal-equation-du-2} are called \textit{smooth-fit equations}.
For any fixed finite $d<u$, positivity and strict monotonicity of $\psi_-^h$ and $\psi_+^\ell$ (see Lemma \ref{lem:psi}) imply
\[
\psi_-^h(d)\psi_+^\ell(u)-\psi_-^h(u)\psi_+^\ell(d)>0.
\]
Thus the two linear equations in \eqref{eq:optimal-equation-du-1} uniquely determine $A=A(d,u)$ and $B=B(d,u)$ as given in \eqref{eq:A} and \eqref{eq:B}. Substituting these coefficients into \eqref{eq:optimal-equation-du-2} gives \eqref{eq:smooth-condition-3} and \eqref{eq:smooth-condition-4}. Conversely, any finite pair $d<u$ satisfying these smooth-fit conditions, together with $A(d,u)$ and $B(d,u)$, solves \eqref{eq:optimal-equation-du-1}--\eqref{eq:optimal-equation-du-2}. 

Finally, for the single upper threshold in Theorem~\ref{thm:main}($e$), consider 
\begin{equation}
\label{eq:optimal-equation-u}
F_{0,B}(u)=K_h,
\qquad
f_{0,B}(u)=0.
\end{equation}
The first equation uniquely determines
\[
B=\frac{J(u)-K_h}{\psi_+^\ell(u)}.
\]
Substitution into the second equation gives
\[
J'(u)-\frac{J(u)-K_h}{\psi_+^\ell(u)}(\psi_+^\ell)'(u)=0.
\]
Hence \eqref{eq:optimal-equation-u} is equivalent to \eqref{eq:smooth-condition-2}.

The following proposition establishes existence and uniqueness for all three systems.

\begin{proposition}
\label{prop:existence}
\begin{enumerate}[$(a)$]
\item
If $-\frac{c}{\beta}<-K_{\ell}<\frac{r\theta(\mu_h-\mu_\ell)}{\beta(\beta+\theta)}-\frac{c}{\beta}\leq K_h$, then the system in \eqref{eq:optimal-equation-d} has a unique finite solution $(d^*,A^*)$, with $A^*>0$.
\item
If $-\frac{c}{\beta}<-K_{\ell}<K_h<\frac{r\theta(\mu_h-\mu_\ell)}{\beta(\beta+\theta)}-\frac{c}{\beta}$, then the system
in \eqref{eq:optimal-equation-du-1}-\eqref{eq:optimal-equation-du-2} has a unique finite solution $(d^*,u^*,A^*,B^*)$, with $d^*<u^*$, $A^*>0$, and $B^*>0$.
\item If $-K_{\ell}\leq -\frac{c}{\beta}<K_h<\frac{r\theta(\mu_h-\mu_\ell)}{\beta(\beta+\theta)}-\frac{c}{\beta}$, then the system in \eqref{eq:optimal-equation-u} has a unique finite solution $(u^*,B^*)$, with $B^*>0$.
\end{enumerate}
\end{proposition}

The proof of Proposition~\ref{prop:existence} proceeds through a sequence of auxiliary results. 

First, Lemma~\ref{lem:minimizer} claims that the local minima (if it exists) of $f_{A,B}$ must be strictly positive whenever at least one coefficient is nonnegative. This property is used in Lemma~\ref{lem:A>0,B>0} to establish positivity of the coefficients in any finite solution of the two-threshold system with $d<u$, and in Lemma~\ref{lem:fAB} to characterize the interval on which $f_{A,B}$ is positive with two boundary points, denoted by $d(A,B)$ and $u(A,B)$. 

We then introduce the area function $S(A,B)=\int_{\mathbb{R}}(f_{A,B}(x))^+\,\mathrm{d}x$. Its continuity and strict comparison property, established in Lemma~\ref{lem:S-properties}, allow us to describe each admissible positive level set, i.e., $\{(A,B): S(A,B)=\kappa\}$ for each $\kappa>0$,  as a continuous, strictly decreasing curve $A=A_\kappa(B)$ in Lemma~\ref{lem:area-level}. For prescribed admissible matching values $\kappa_1<\kappa_2$, we first impose $S(A,B)=\kappa:=\kappa_2-\kappa_1$ and then select the unique point on this curve at which $F_{A_{\kappa}(B),B}(d(A_{\kappa}(B),B))=\kappa_1$. The second matching condition follows from the area identity. Lemma~\ref{lem:level-matching} proves that this selection is well defined and unique by showing that the lower matching value varies continuously and strictly monotonically along the curve.

The three parts of the proposition follow by choosing the appropriate matching values: choosing $\kappa_1=-K_{\ell}$ and $\kappa_2=\frac{r\theta(\mu_h-\mu_\ell)}{\beta(\beta+\theta)}-\frac{c}{\beta}$ for part ($a$), $\kappa_1=-K_{\ell}$ and $\kappa_2=K_h$ for part ($b$), $\kappa_1=-\frac{c}{\beta}$ and $\kappa_2=K_h$ for part ($c$). 
Finally, we show that every finite solution of each system satisfies the conditions of Lemma \ref{lem:level-matching}, which establishes uniqueness among all finite solutions.

The proofs of Lemmas \ref{lem:minimizer}-\ref{lem:fAB} and Lemmas \ref{lem:S-properties}-\ref{lem:level-matching} are given in Appendix \ref{app:section-4}.
\begin{lemma}
\label{lem:minimizer}
Let $A,B\in\mathbb{R}$, and suppose that at least one of $A$ and $B$ is nonnegative. Then every local minimizer $x_{\min}\in\mathbb{R}$ of $f_{A,B}$, if it exists, satisfies $f_{A,B}(x_{\min})>0$.
\end{lemma}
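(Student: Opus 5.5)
The plan is a maximum-principle argument at a critical point. I would derive second-order ODEs satisfied by each building block of $f_{A,B}$, evaluate a suitable combination at a local minimizer $x_0$ (where $f_{A,B}'(x_0)=0$ and $f_{A,B}''(x_0)\ge 0$), and show that $f_{A,B}(x_0)\le 0$ is incompatible with the sign hypothesis on $A$ or $B$. I should say up front that the straightforward version of this computation does not close by itself; the gap is identified in the third paragraph.

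\emph{Step 1: differentiated ODEs.} Differentiating \eqref{eq:ODE-psi-h} shows that $g_h:=(\psi_-^h)'$ satisfies $\mathcal{M}_h g_h=0$, where
\[
\mathcal{M}_h g:=\tfrac{\sigma^2}{2}g''+\theta(\mu_h-x)g'-(\beta+\theta)g .
\]
Likewise $g_\ell:=(\psi_+^\ell)'$ satisfies $\mathcal{M}_\ell g_\ell=0$, with $\mathcal{M}_\ell$ defined by replacing $\mu_h$ with $\mu_\ell$. Since $\mathcal{M}_h-\mathcal{M}_\ell=\theta(\mu_h-\mu_\ell)\frac{\dd}{\dd x}$, the cross relations analogous to Remark~\ref{rem:lemma-2} follow. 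For $J'$, I would differentiate \eqref{eq:ODE-J-h}--\eqref{eq:ODE-J-l} on $\R\setminus\{0\}$ and subtract; the $r\mathbf 1_{\{x>0\}}$ terms cancel. Because $J\in\mathcal C^3(\R)$ by Lemma~\ref{lem:J}$(d)$, the resulting identities extend to all of $\R$:
\[
\mathcal{M}_h J'=\theta(\mu_h-\mu_\ell)J_\ell'' \quad\text{and}\quad \mathcal{M}_\ell J'=\theta(\mu_h-\mu_\ell)J_h''.
\]
Collecting terms and writing $\Delta:=\mu_h-\mu_\ell$, this gives
\[
\mathcal{M}_h f_{A,B}=\theta\Delta\bigl(J_\ell''-B(\psi_+^\ell)''\bigr)\quad\text{and}\quad
\mathcal{M}_\ell f_{A,B}=\theta\Delta\bigl(J_h''-A(\psi_-^h)''\bigr).
\]
These two right-hand sides agree wherever $f_{A,B}'=0$.

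\emph{Step 2: evaluation at the minimizer.} At $x_0$ we have $f_{A,B}'(x_0)=0$, so
\[
(\beta+\theta)f_{A,B}(x_0)=\tfrac{\sigma^2}{2}f_{A,B}''(x_0)-\mathcal{M}f_{A,B}(x_0)\ge-\mathcal{M}f_{A,B}(x_0).
\]
The intended case split is to use $\mathcal{M}_\ell$ when $A\ge0$ and $\mathcal{M}_h$ when $B\ge0$, so that the coefficient term $A(\psi_-^h)''$ or $B(\psi_+^\ell)''$ (positive by Lemma~\ref{lem:psi}$(b)$) appears with a favorable sign.

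\emph{Main obstacle.} The difficulty is that the source terms $J_h''$ and $J_\ell''$ are strictly positive by Lemma~\ref{lem:J}$(b)$, and they enter $-\mathcal{M}f_{A,B}$ with the wrong sign. So Step 2 alone does not give positivity, and an additional comparison is needed. My first attempt would be to use the criticality relation $J''(x_0)=B(\psi_+^\ell)''(x_0)-A(\psi_-^h)''(x_0)$ to trade $J_h''$ against $J_\ell''$. A second option is to divide by the positive function $J'$ and study the ratio $f_{A,B}/J'$. Its ODE still has a zeroth-order term of definite sign, but the source becomes $\theta\Delta J_\ell''/J'$, and this is where the explicit representations \eqref{eq:J-h}--\eqref{eq:J-explicit} would be needed to bound $J_h''$ and $J_\ell''$ against $J'$. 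Failing both, I would argue by contradiction. I would suppose $f_{A,B}(x_0)\le0$ and run the comparison principle for $\mathcal{M}_h$ (or $\mathcal{M}_\ell$) on a half-line, using the boundary behaviour at $\pm\infty$ from Lemma~\ref{lem:J}$(c)$--$(d)$ and Lemma~\ref{lem:psi}$(c)$. For $A\ge0$, the term $A(\psi_-^h)'$ tends to $-\infty$ as $x\to-\infty$, so the argument must be arranged on the side where the sign hypothesis controls the tail.
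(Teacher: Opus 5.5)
Your strategy is the paper's: evaluate second-order ODEs for $f_{A,B}$ at a critical point, using $\mathcal{M}_h$ when $B\ge0$ and $\mathcal{M}_\ell$ when $A\ge0$. However, Step 1 contains a sign error, and that error is what creates your ``main obstacle''. As written, the proposal therefore does not prove the lemma.

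Differentiating \eqref{eq:ODE-J-h} gives $\mathcal{M}_h J_h'=-r\mathbf 1_{\{x>0\}}$. Differentiating \eqref{eq:ODE-J-l} and using $\mathcal{M}_h-\mathcal{M}_\ell=\theta\Delta\frac{\dd}{\dd x}$ gives $\mathcal{M}_h J_\ell'=-r\mathbf 1_{\{x>0\}}+\theta\Delta J_\ell''$. Since $J'=J_h'-J_\ell'$, it follows that $\mathcal{M}_h J'=-\theta\Delta J_\ell''$. The same argument gives $\mathcal{M}_\ell J'=-\theta\Delta J_h''$. Both signs are the opposite of what you wrote. You handled the cross term correctly for $-B(\psi_+^\ell)'$, but dropped the minus sign with which $J_\ell'$ enters $J'$. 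The correct identities are
\[
\mathcal{M}_h f_{A,B}=-\theta\Delta\bigl(J_\ell''+B(\psi_+^\ell)''\bigr),\qquad \mathcal{M}_\ell f_{A,B}=-\theta\Delta\bigl(J_h''+A(\psi_-^h)''\bigr).
\]
Your own consistency remark should have caught this. With your signs, $\mathcal{M}_h f_{A,B}-\mathcal{M}_\ell f_{A,B}=\theta\Delta\bigl(f_{A,B}'-2J''\bigr)$ rather than $\theta\Delta f_{A,B}'$. So your two right-hand sides do not in fact agree where $f_{A,B}'=0$.

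With the correct signs, Step 2 closes immediately. Let $x_0$ be a local minimizer and suppose $B\ge0$. Then
\[
(\beta+\theta)f_{A,B}(x_0)=\tfrac{\sigma^2}{2}f_{A,B}''(x_0)+\theta\Delta\bigl(J_\ell''(x_0)+B(\psi_+^\ell)''(x_0)\bigr)>0,
\]
because $f_{A,B}\in\mathcal C^2$ (from $J\in\mathcal C^3$) gives $f_{A,B}''(x_0)\ge0$, and $J_\ell''>0$ and $(\psi_+^\ell)''>0$. If $B<0$, then $A\ge0$, and the same computation with $\mathcal{M}_\ell$ produces the positive source $J_h''+A(\psi_-^h)''$. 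This is exactly the paper's proof. The terms $J_h''$ and $J_\ell''$ help rather than hurt. Your fallback ideas in the third paragraph (trading via criticality, the ratio with $J'$, half-line comparison) are therefore unnecessary. They are also only sketched, so they do not fill the gap in the proposal as it stands.
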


\begin{lemma}
\label{lem:A>0,B>0}
Every solution $(d,u,A,B)\in\mathbb{R}^4$ to \eqref{eq:optimal-equation-du-1}--\eqref{eq:optimal-equation-du-2} with $d<u$ satisfies $A>0$ and $B>0$.
\end{lemma}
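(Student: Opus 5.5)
The plan is to argue by contradiction, using Lemma~\ref{lem:minimizer} together with the boundary behaviour of $F_{A,B}$ between the two thresholds. Let $(d,u,A,B)$ solve \eqref{eq:optimal-equation-du-1}--\eqref{eq:optimal-equation-du-2} with $d<u$. Then $F_{A,B}(d)=-K_\ell<K_h=F_{A,B}(u)$, so $f_{A,B}$ integrates to $K_h+K_\ell>0$ over $[d,u]$. In addition, $f_{A,B}(d)=f_{A,B}(u)=0$.

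\textbf{Step 1: at least one coefficient is nonnegative.} Suppose $A<0$ and $B<0$, and consider $f_{A,B}=A(\psi_-^h)'-B(\psi_+^\ell)'+J'$. By Lemma~\ref{lem:psi}(b), $A(\psi_-^h)'>0$ and $-B(\psi_+^\ell)'>0$, while $J'>0$ by Lemma~\ref{lem:J}(d). Hence $f_{A,B}>0$ everywhere, which contradicts $f_{A,B}(d)=0$. So at least one of $A,B$ is nonnegative, and Lemma~\ref{lem:minimizer} applies.

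\textbf{Step 2: $f_{A,B}$ has a local minimizer at which it is nonpositive.} Since $f_{A,B}$ vanishes at $d$ and $u$ and has positive integral over $[d,u]$, it is positive somewhere in $(d,u)$. I would then examine the sign of $f_{A,B}$ just outside $[d,u]$.

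\emph{Case $A\ge 0$, $B\le0$.} Here $f_{A,B}\ge J'>0$ everywhere, contradicting $f_{A,B}(d)=0$.

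\emph{Case $A>0$, $B>0$.} This is the claimed conclusion, so nothing needs to be done.

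\emph{Remaining case $A\le 0$, $B>0$.} By symmetry the other mixed case $A>0$, $B\le 0$ is already excluded. I would look at the behaviour at $+\infty$. Using Lemma~\ref{lem:psi}(c), $(\psi_+^\ell)'\to\infty$ while $(\psi_-^h)'\to0$ and $J'\to0$, so $f_{A,B}(x)\to-\infty$ as $x\to\infty$. The zero at $u$ with $f_{A,B}>0$ just to the left then forces a sign change. To produce a contradiction I need a local minimizer with nonpositive value.

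Consider the smooth function $f_{A,B}$ on $[d,\infty)$. It vanishes at $d$, is positive somewhere inside, vanishes at $u$, and tends to $-\infty$. This does not immediately give an interior minimizer, so I would also look to the left of $d$. As $x\to-\infty$, $(\psi_+^\ell)'\to0$ and $J'\to0$, while $A(\psi_-^h)'\ge0$. If $A<0$, this term is positive and tends to $+\infty$, so $f_{A,B}\to+\infty$. Thus $f_{A,B}$ is large and positive far to the left and vanishes at $d$. If $f_{A,B}$ dips below zero just left of $d$, or just right of $d$, it attains a minimum value $\le 0$ on a compact interval between two positive points. By Lemma~\ref{lem:minimizer} this is a contradiction.

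The delicate subcase is when $d$ is a tangential zero with $f_{A,B}\ge0$ nearby. But then $d$ itself is a local minimizer with value $0$, again contradicting Lemma~\ref{lem:minimizer}. If $A=0$, the function $f_{0,B}\to 0$ at $-\infty$; I would handle this by the same tangency and minimum argument on a bounded interval. In that subcase, $d$ is a zero lying between a region where $f_{0,B}$ is positive inside $(d,u)$ and a region to the left. Either $d$ is a local minimum, which gives a contradiction, or $f_{0,B}<0$ somewhere left of $d$. In the latter case, the limit $0$ at $-\infty$ together with negativity produces an interior minimum below zero, which is again a contradiction.

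This leaves only $A>0$ and $B>0$, but I must also rule out $B=0$ with $A>0$. In that case $f_{A,0}=A(\psi_-^h)'+J'$, and at $+\infty$ it tends to $0$. The zero at $u$ with positivity to its left yields, by the same reasoning, either a minimum at $u$ or a negative interior minimum, contradicting Lemma~\ref{lem:minimizer}. Symmetrically, $A=0$ is excluded at $d$.

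\textbf{Main obstacle.} The main obstacle is Step 2: correctly organizing the subcases so that in each one a local minimizer with $f_{A,B}\le0$ is exhibited. The two boundary zeros are the key ingredient. Either one of them is itself a local minimum with value $0$, or $f_{A,B}$ turns negative beside it. In the latter case the limits at $\pm\infty$ from Lemmas~\ref{lem:J} and~\ref{lem:psi} must be used to trap a negative minimum on a compact set.
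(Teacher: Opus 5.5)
Your strategy (trap a nonpositive local minimum of $f_{A,B}$ between the zeros $d,u$ and the limits at $\pm\infty$, then invoke Lemma~\ref{lem:minimizer}) is the paper's, and your cases $A\le0<B$ and $A>0=B$ are sound. However, the case split contains a sign error that leaves one case open.

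In ``Case $A\ge0$, $B\le0$'' you assert $f_{A,B}\ge J'$. But $(\psi_-^h)'<0$ by Lemma~\ref{lem:psi}$(b)$, so $A>0$ gives $A(\psi_-^h)'<0$, and no such bound holds. The only sign-definite case is $A\le0$, $B\le0$, where indeed $f_{A,B}\ge J'>0$. That is your Step~1, which should be stated with non-strict inequalities; as written, the case $A<0$, $B=0$ appears in none of your cases. Consequently, your remark that ``the other mixed case $A>0$, $B\le0$ is already excluded'' rests on the error. The case $A>0$, $B<0$ is never ruled out: your last paragraph only treats $B=0$.

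The repair uses your own machinery. Let $A>0$ and $B<0$. As $x\to\infty$, $-B(\psi_+^\ell)'(x)\to+\infty$ while $A(\psi_-^h)'(x)\to0$ and $J'(x)\to0$, so $f_{A,B}(x)\to+\infty$. On $(d,\infty)$ the point $u$ is an interior zero, and there are two possibilities:
\begin{itemize}
\item If $f_{A,B}\ge0$ on $(d,\infty)$, then $u$ is a local minimizer with value $0$.
\item Otherwise $f_{A,B}(x_0)<0$ for some $x_0>d$. Choose $R>x_0$ with $f_{A,B}(R)>f_{A,B}(x_0)$. The minimum of $f_{A,B}$ on $[d,R]$ is then negative and attained in the interior.
\end{itemize}
Both contradict Lemma~\ref{lem:minimizer}, which applies because $A>0$.

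This is how the paper disposes of $B\le0$ in one stroke, since the limit at $+\infty$ is $+\infty$ or $0$, nonnegative in either subcase. It does so after first proving $A>0$: if $A\le0$, then $f_{A,B}(d)=0$ forces $B=\bigl(J'(d)+A(\psi_-^h)'(d)\bigr)/(\psi_+^\ell)'(d)>0$, and the paper then argues on $(-\infty,u)$. Organizing the cases this way avoids your sign-by-sign enumeration. The paper needs only the smooth-fit equations and $d<u$, so your use of $\int_d^u f_{A,B}=K_h+K_\ell>0$ is harmless but unnecessary.
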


\begin{lemma}
\label{lem:fAB}
Let $A,B\geq0$ with $A+B>0$, and suppose that $\sup_{x\in\mathbb{R}}f_{A,B}(x)>0$. Then there exist unique extended real numbers
$d(A,B)$ and $u(A,B)$, with
\[
-\infty\leq d(A,B)<u(A,B)\leq+\infty,
\]
such that
\begin{equation}
\label{eq:fAB-sign}
f_{A,B}(x)
\begin{cases}
<0, & x<d(A,B),\\
>0, & d(A,B)<x<u(A,B),\\
<0, & x>u(A,B),
\end{cases}
\end{equation}
where $d(A,B)=-\infty$ if and only if $A=0$ and  $u(A,B)=\infty$ if and only if $B=0$. Moreover, we have
\begin{align}
f_{A,B}\bigl(d(A,B)\bigr)&=0,
\qquad
f_{A,B}'\bigl(d(A,B)\bigr)>0
&&\text{if }A>0,\label{eq:fAB-lower-slope}\\
f_{A,B}\bigl(u(A,B)\bigr)&=0,
\qquad
f_{A,B}'\bigl(u(A,B)\bigr)<0
&&\text{if }B>0.\label{eq:fAB-upper-slope}
\end{align}

\end{lemma}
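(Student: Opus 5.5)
The plan is to read off the shape of $f_{A,B}$ from its behaviour at $\pm\infty$ together with Lemma~\ref{lem:minimizer}. Write $f=f_{A,B}=A(\psi_-^h)'-B(\psi_+^\ell)'+J'$ with $A,B\ge0$ and $A+B>0$.

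\textbf{Step 1: tails.} By Lemma~\ref{lem:psi}(c) and Lemma~\ref{lem:J}(d):
\begin{itemize}
\item If $A>0$, then $f(x)\to-\infty$ as $x\to-\infty$, since $(\psi_-^h)'\to-\infty$ while $(\psi_+^\ell)'\to0$ and $J'\to0$.
\item If $B>0$, then $f(x)\to-\infty$ as $x\to\infty$, since $(\psi_+^\ell)'\to\infty$ while $(\psi_-^h)'\to0$ and $J'\to0$.
\item If $A=0$, then $f=J'-B(\psi_+^\ell)'$. It tends to $0$ at $-\infty$ and is positive for very negative $x$.
\item If $B=0$, then $f=J'+A(\psi_-^h)'$. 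It tends to $0$ at $+\infty$ and is positive for very large $x$.
\end{itemize}
For the case $A=0$, I would compare the decay rates of $J'$ and $(\psi_+^\ell)'$ at $-\infty$. The function $(\psi_+^\ell)'$ decays like a Gaussian-type tail $e^{-\theta x^2/\sigma^2}$ times powers. The function $J'$ is $r\int e^{-\beta t}e^{-\theta t}\bigl[\Phi(m_t^h/\sqrt{v_t})-\Phi(m_t^\ell/\sqrt{v_t})\bigr]\,\dd t$. The comparison should go through because the shift $\mu_h>\mu_\ell$ makes the $\Phi$-difference decay more slowly than $\psi_+^\ell$. I flag this as a delicate point. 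If it resists, I would instead use that $J'>0$ and that any negative value of $f$ near $-\infty$ would have to be preceded by a local minimum with $f<0$ (because $f\to0$), contradicting Lemma~\ref{lem:minimizer}. This alternative argument is cleaner and is the one I would use. The case $B=0$ is symmetric.

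\textbf{Step 2: sign structure.} Let $P=\{f>0\}$, which is nonempty by hypothesis. First I show $P$ is an interval. Suppose $a<b<c$ with $f(a),f(c)>0$ and $f(b)\le0$. Then $f$ attains a minimum on $[a,c]$ at an interior point, which is a local minimizer with nonpositive value. This contradicts Lemma~\ref{lem:minimizer}. So $P=(d(A,B),u(A,B))$.

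Next I show $f<0$ (not merely $f\le0$) outside $\overline{P}$. Take, say, $x>u$ with $f(x)=0$. If $B>0$, the tail $f\to-\infty$ gives a point further right with $f<0$. Then either $f\equiv0$ on an interval, which yields local minimizers with value $0$, or there is an interior local minimum of $f$ on $[u,x]$ or near $x$ with value $\le0$. Either way this contradicts Lemma~\ref{lem:minimizer}.

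The endpoint characterization follows from Step 1:
\begin{itemize}
\item If $A>0$, then $f\to-\infty$ at $-\infty$, so $d>-\infty$.
\item If $A=0$, then $f>0$ near $-\infty$, so $d=-\infty$.
\end{itemize}
The same reasoning applied at $+\infty$ gives $u=\infty$ if and only if $B=0$. Uniqueness of $d,u$ is immediate from the sign pattern. Continuity gives $f(d)=0$ when $d$ is finite.

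\textbf{Step 3: strict slopes.} This is the main obstacle. At $d$ finite we have $f(d)=0$ and $f'(d)\ge0$. Suppose $f'(d)=0$. I would use the ODE structure: differentiating \eqref{eq:ODE-Jh-Jl-ell} and the ODEs of Lemma~\ref{lem:psi} yields a second-order relation for $f$ of the form
\[
\tfrac{\sigma^2}{2}f''+\theta(\mu-x)f'-(\beta+\theta)f=g,
\]
with an explicit forcing $g$ built from $J_h'',J_\ell''$ and $(\psi_-^h)',(\psi_+^\ell)'$. At a point where $f=f'=0$, this gives $f''(d)=\tfrac{2}{\sigma^2}g(d)$. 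The key is the sign of $f''$ there:
\begin{itemize}
\item If $f''(d)<0$, then $f<0$ just to the right of $d$, contradicting $f>0$ on $(d,u)$.
\item If $f''(d)>0$, then $d$ is a local minimizer with $f(d)=0$, contradicting Lemma~\ref{lem:minimizer}.
\end{itemize}
So the task is to show $f''(d)\neq0$, or to handle $f''(d)=0$ via higher derivatives. Since $f\in\mathcal C^\infty$ away from $0$, a degenerate zero forces either a local minimum at value $0$ or a sign failure. I expect that simply observing that a double zero with $f<0$ on the left and $f>0$ on the right is possible in principle is the real difficulty. I would therefore try to mimic the proof of Lemma~\ref{lem:minimizer}, presumably an ODE argument showing $f''>0$ wherever $f'=0$ and $f\le0$, and apply it at $d$ to get the contradiction. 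The argument at $u$ is symmetric and gives $f'(u)<0$.
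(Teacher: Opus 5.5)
Your Steps 1 and 2 are correct in substance. They repackage the paper's argument: the paper first uses Lemma~\ref{lem:minimizer} to show that $f_{A,B}>0$ strictly between any two zeros, so there are at most two zeros. It then treats the cases $A,B>0$, $B=0$, $A=0$ with the tail limits and the intermediate value theorem. Your ``$\{f_{A,B}>0\}$ is an interval'' formulation does the same work.

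The genuine gap is Step~3. You leave it open, and the route you propose for closing it has the wrong sign. No degenerate case arises, because the forcing term has a definite sign. Take the identity \eqref{eq:ODE-f-h}, obtained by differentiating \eqref{eq:ODE-Jh-Jl-h}, \eqref{eq:ODE-psi-h} and \eqref{eq:ODE-psi-ell-with-h}; it is the same identity that proves Lemma~\ref{lem:minimizer}. Evaluated at a finite zero $z$ with $f_{A,B}'(z)=0$, it gives
\[
\frac{\sigma^2}{2}f_{A,B}''(z)=-\theta(\mu_h-\mu_\ell)\bigl[J_\ell''(z)+B(\psi_+^\ell)''(z)\bigr]<0,
\]
since $B\ge0$, $J_\ell''>0$ and $(\psi_+^\ell)''>0$. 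The $\Gamma_\ell$-version you mention, \eqref{eq:ODE-f-ell}, works equally well: its forcing is $-\theta(\mu_h-\mu_\ell)[J_h''+A(\psi_-^h)'']<0$ because $A\ge0$. Note that the forcing involves second, not first, derivatives of $\psi_-^h$ and $\psi_+^\ell$.

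Hence $f_{A,B}''(z)<0$ strictly, so $z$ is a strict local maximum with value $0$. This contradicts $f_{A,B}>0$ on the side of $z$ lying in $(d(A,B),u(A,B))$, which is exactly your first bullet. More generally, the ODE gives $f_{A,B}''<0$, not $>0$, wherever $f_{A,B}'=0$ and $f_{A,B}\le0$. So the statement you planned to extract by mimicking Lemma~\ref{lem:minimizer} has the wrong sign, and no higher-derivative analysis is needed.

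A smaller point in Step 1: discard the decay-rate comparison entirely, because it is false. As $x\to-\infty$, $\psi_+^\ell$ and its derivative vanish only polynomially; the Gaussian-type behaviour there belongs to $\psi_-^h$, which blows up. Both $(\psi_+^\ell)'(x)$ and $J'(x)$ are of order $|x|^{-\beta/\theta-1}$, so for large $B$ the function $f_{0,B}$ is genuinely negative near $-\infty$.

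Positivity of $f_{0,B}$ near $-\infty$ therefore comes only from the hypothesis $\sup_x f_{0,B}(x)>0$. Your alternative argument must explicitly take a point $x_1$ with $f_{0,B}(x_1)>0$ to the right of the putative bad point $x_0$. A negative value $f_{0,B}(x_0)$ then forces a negative interior minimum on some $[R,x_1]$, using $f_{0,B}\to0$ at $-\infty$. A zero at $x_0$, with $f_{0,B}\ge0$ on $(-\infty,x_1)$, is a local minimizer with value $0$. Both contradict Lemma~\ref{lem:minimizer}.
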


\begin{remark}
\label{rem:nonempty-set}
The parameter set considered in Lemma~\ref{lem:fAB} is nonempty;
in fact, it contains pairs with both coefficients strictly positive.
To see this, fix any $x_0\in\mathbb{R}$.
By Lemmas~\ref{lem:J}$(d)$ and~\ref{lem:psi}$(b)$, we have $J'(x_0)>0$, $(\psi_-^h)'(x_0)<0$, and $(\psi_+^\ell)'(x_0)>0$.
Define
\[
A_0=-\frac{J'(x_0)}{2(\psi_-^h)'(x_0)}>0,
\qquad
B_0=\frac{J'(x_0)}{4(\psi_+^\ell)'(x_0)}>0.
\]
Then
\[
f_{A_0,B_0}(x_0)
=-\frac{J'(x_0)}{2}-\frac{J'(x_0)}{4}+J'(x_0)
=\frac{J'(x_0)}{4}>0.
\]
Consequently, $\sup_{x\in\mathbb{R}}f_{A_0,B_0}(x)>0$.
\end{remark}

For $A,B\geq0$, define
\[
S(A,B):=\int_{\mathbb{R}}
\bigl(f_{A,B}(x)\bigr)^+\,\mathrm{d}x.
\]
If $A+B>0$ and $\sup_{x\in\mathbb{R}}f_{A,B}(x)>0$, then Lemma \ref{lem:fAB} gives
\[
S(A,B)
=
\int_{d(A,B)}^{u(A,B)}
f_{A,B}(x)\,\mathrm{d}x
=
F_{A,B}\bigl(u(A,B)\bigr)
-
F_{A,B}\bigl(d(A,B)\bigr),
\]
where values at infinite arguments are interpreted as limits. If $\sup_{x\in\mathbb{R}}f_{A,B}(x)\leq0$, then $S(A,B)=0$.
We also note that $\sup_{x\in\mathbb{R}}f_{A,B}(x)>0$ is equivalent to $S(A,B)>0$.

\begin{lemma}
\label{lem:S-properties}
The function $S$ is finite and continuous on $[0,\infty)^2$.
Moreover,
\[
S(0,0)=\frac{r\theta(\mu_h-\mu_\ell)}{\beta(\beta+\theta)},\qquad
\lim_{A\to\infty}S(A,0)=\lim_{B\to\infty}S(0,B)=0.
\]
It is nonincreasing in each variable. More precisely, if $A_2\geq A_1\geq0$, $B_2\geq B_1\geq0$, $(A_2,B_2)\neq(A_1,B_1)$, and $S(A_1,B_1)>0$, then
\begin{equation}
    \label{eq:S-monotone}
    S(A_2,B_2)<S(A_1,B_1).
\end{equation}
\end{lemma}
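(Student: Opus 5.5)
Our plan is to prove the four assertions of Lemma~\ref{lem:S-properties} in turn: finiteness and continuity, the value $S(0,0)$, the two limits, and then the strict comparison \eqref{eq:S-monotone}.

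\textbf{Finiteness.} Throughout we use the sign structure of Lemma~\ref{lem:psi}$(b)$: $(\psi_-^h)'<0$ and $(\psi_+^\ell)'>0$. For $A,B\ge0$ this gives
\[
f_{A,B}(x)=A(\psi_-^h)'(x)-B(\psi_+^\ell)'(x)+J'(x)\le J'(x),
\]
and hence $0\le (f_{A,B})^+\le J'$. Since $J'>0$, Lemma~\ref{lem:J}$(d)$ yields
\[
\int_{\R}J'\,\dd x=\lim_{x\to\infty}J(x)-\lim_{x\to-\infty}J(x)=\frac{r\theta(\mu_h-\mu_\ell)}{\beta(\beta+\theta)},
\]
so $S$ is finite on $[0,\infty)^2$.

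\textbf{Continuity and $S(0,0)$.} The map $(A,B)\mapsto (f_{A,B}(x))^+$ is continuous for each fixed $x$, and it is dominated by the integrable function $J'$ independently of $(A,B)$. Dominated convergence therefore gives continuity of $S$ on $[0,\infty)^2$. At the origin $f_{0,0}=J'>0$, so $S(0,0)$ equals the integral computed above.

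\textbf{The limits.} For $S(A,0)$ with $A\to\infty$, we argue pointwise. At each fixed $x$ we have $A(\psi_-^h)'(x)\to-\infty$, so $(f_{A,0}(x))^+\to0$. Dominated convergence with the same majorant $J'$ then gives $S(A,0)\to0$. The case $S(0,B)$ with $B\to\infty$ is identical, using $(\psi_+^\ell)'>0$. No uniformity in $x$ is needed here, only the pointwise limit together with the uniform majorant.

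\textbf{Strict comparison.} Take $A_2\ge A_1\ge0$, $B_2\ge B_1\ge0$ with $(A_2,B_2)\ne(A_1,B_1)$. Then
\[
f_{A_2,B_2}-f_{A_1,B_1}=(A_2-A_1)(\psi_-^h)'-(B_2-B_1)(\psi_+^\ell)'<0
\]
pointwise on all of $\R$, since at least one of the two increments is strictly positive and multiplies a function of strict sign. Consequently $(f_{A_2,B_2})^+\le (f_{A_1,B_1})^+$ everywhere, which already gives monotonicity in each variable. Now suppose $S(A_1,B_1)>0$. By continuity of $f_{A_1,B_1}$, the open set $U=\{f_{A_1,B_1}>0\}$ is nonempty and so has positive Lebesgue measure. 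On $U$ we have $(f_{A_2,B_2})^+<f_{A_1,B_1}=(f_{A_1,B_1})^+$ strictly, because either $f_{A_2,B_2}\le0<f_{A_1,B_1}$ or $0<f_{A_2,B_2}<f_{A_1,B_1}$. Integrating a strictly positive function over a set of positive measure gives the strict inequality $S(A_2,B_2)<S(A_1,B_1)$.

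The only point needing care is the uniform integrable bound $J'$, which rests on the sign properties already in Lemma~\ref{lem:psi} and on the limits in Lemma~\ref{lem:J}$(d)$. Everything else is dominated convergence.
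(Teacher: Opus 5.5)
Your proof is correct and follows essentially the same route as the paper: the uniform majorant $0\le (f_{A,B})^+\le J'$ with $\int_{\mathbb{R}}J'\,\mathrm{d}x=\frac{r\theta(\mu_h-\mu_\ell)}{\beta(\beta+\theta)}$, dominated convergence for both continuity and the two limits, and the pointwise strict inequality $f_{A_2,B_2}<f_{A_1,B_1}$ integrated over the nonempty open set where $f_{A_1,B_1}>0$. Your case split there is a slightly more explicit justification of the strict inequality between the positive parts, which the paper states in one line.
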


\begin{lemma}
\label{lem:area-level}
For every $\kappa\in\bigl(0,\frac{r\theta(\mu_h-\mu_\ell)}{\beta(\beta+\theta)}\bigr)$, there exist unique constants $\bar A_\kappa,\bar B_\kappa>0$
satisfying
\begin{align*}
    S(\bar{A}_\kappa,0)=S(0,\bar{B}_\kappa)=\kappa.
\end{align*}
For each $B\in[0,\bar B_\kappa]$, there exists a unique $A_\kappa(B)\geq0$ such that
\begin{align*}
    S\bigl(A_\kappa(B),B\bigr)=\kappa.
\end{align*}
The function $A_\kappa(\cdot)\in\mathcal{C}^1((0,\bar B_\kappa))$ is continuous and strictly decreasing on $[0,\bar B_\kappa]$, and satisfies
\begin{align}
\label{eq:A-kappa-bound}
A_\kappa(0)=\bar{A}_\kappa,
\qquad
A_\kappa(\bar B_\kappa)=0,
\qquad
0<A_\kappa(B)<\bar A_\kappa
\quad\text{for }0<B<\bar{B}_\kappa.
\end{align}
Moreover, every nonnegative solution of $S(A,B)=\kappa$ is of the form $(A_\kappa(B),B)$ with $B\in[0,\bar{B}_\kappa]$.
\end{lemma}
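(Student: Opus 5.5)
The proof will rest on Lemma~\ref{lem:S-properties}: continuity of $S$ on $[0,\infty)^2$, the value $S(0,0)=\frac{r\theta(\mu_h-\mu_\ell)}{\beta(\beta+\theta)}$, the limits along the axes, and strict monotonicity on the set where $S>0$. The argument has three steps: the axis constants, then the curve $A_\kappa$ as a function of $B$, then its regularity.

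\emph{Axis constants.} Fix $\kappa\in\bigl(0,\frac{r\theta(\mu_h-\mu_\ell)}{\beta(\beta+\theta)}\bigr)$.
\begin{enumerate}[(i)]
\item The map $A\mapsto S(A,0)$ is continuous on $[0,\infty)$, starts at $S(0,0)>\kappa$, and tends to $0<\kappa$ as $A\to\infty$. The intermediate value theorem gives some $\bar A_\kappa>0$ with $S(\bar A_\kappa,0)=\kappa$.
\item For uniqueness, note that $S(\cdot,0)$ is strictly decreasing wherever it is positive, by \eqref{eq:S-monotone}. Once it reaches $0$ it stays $0$, since it is nonincreasing. So the level $\kappa>0$ is attained exactly once.
\item The same argument in the second variable gives a unique $\bar B_\kappa>0$ with $S(0,\bar B_\kappa)=\kappa$.
\end{enumerate}

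\emph{The curve for $B\in[0,\bar B_\kappa]$.}
\begin{enumerate}[(i)]
\item Fix $B$ in this interval. Since $S(0,\cdot)$ is strictly decreasing while positive, $S(0,B)\ge S(0,\bar B_\kappa)=\kappa$, with equality only when $B=\bar B_\kappa$.
\item We also have $S(A,B)\le S(A,0)\to0$ as $A\to\infty$.
\item Continuity and the intermediate value theorem in $A$ give some $A_\kappa(B)\ge0$ with $S(A_\kappa(B),B)=\kappa$. Strict monotonicity in $A$ on $\{S>0\}$ makes it unique.
\item The endpoint values are $A_\kappa(\bar B_\kappa)=0$, because $S(0,\bar B_\kappa)=\kappa$, and $A_\kappa(0)=\bar A_\kappa$. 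For $0<B<\bar B_\kappa$ we get $A_\kappa(B)>0$, since $S(0,B)>\kappa$.
\item Strict decrease: take $B_1<B_2$ and suppose $A_\kappa(B_2)\ge A_\kappa(B_1)$. Then \eqref{eq:S-monotone} gives $\kappa=S(A_\kappa(B_2),B_2)<S(A_\kappa(B_1),B_1)=\kappa$, a contradiction. The bound $A_\kappa(B)<\bar A_\kappa$ follows.
\item Continuity of $A_\kappa$: take a limit point of $A_\kappa(B_n)$ as $B_n\to B$. The values are bounded by $\bar A_\kappa$, continuity of $S$ shows the limit point solves $S(\cdot,B)=\kappa$, and uniqueness forces it to equal $A_\kappa(B)$.
\item Exhaustiveness: let $(A,B)$ be a nonnegative solution of $S(A,B)=\kappa$. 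Then $B\le\bar B_\kappa$, since otherwise $S(A,B)\le S(0,B)<\kappa$; here we use $S(0,\bar B_\kappa)=\kappa>0$ and \eqref{eq:S-monotone}. Uniqueness then gives $A=A_\kappa(B)$.
\end{enumerate}

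\emph{$\mathcal C^1$ regularity on $(0,\bar B_\kappa)$ (the main obstacle).} Here $A,B>0$ and $S>0$, so Lemma~\ref{lem:fAB} gives finite $d=d(A,B)<u=u(A,B)$ that are transversal zeros of $f_{A,B}$, by \eqref{eq:fAB-lower-slope}--\eqref{eq:fAB-upper-slope}.
\begin{enumerate}[(i)]
\item The implicit function theorem applied to $f_{A,B}(x)=0$ shows that $d$ and $u$ are $\mathcal C^1$ in $(A,B)$ near such a point.
\item Then $S(A,B)=F_{A,B}(u)-F_{A,B}(d)$ is $\mathcal C^1$. The terms $\partial_x F=f=0$ at $d,u$ drop out of its derivatives, leaving
\[
\partial_A S=\psi_-^h(u)-\psi_-^h(d)<0,\qquad \partial_B S=-\bigl(\psi_+^\ell(u)-\psi_+^\ell(d)\bigr)<0,
\]
using Lemma~\ref{lem:psi}$(b)$.
\item Since $\partial_A S\ne0$, the implicit function theorem yields $A_\kappa\in\mathcal C^1((0,\bar B_\kappa))$. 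It coincides with the function found above, by uniqueness.
\item The delicate point is justifying (i). This needs smoothness of $f_{A,B}$ in $x$ near $d$ and $u$. $J$ is only $\mathcal C^3$, but $J'$ is $\mathcal C^2$, so $f_{A,B}$ is $\mathcal C^1$ jointly in $(x,A,B)$, which suffices.
\end{enumerate}
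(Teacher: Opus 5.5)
Your proposal is correct and follows the paper's proof essentially step by step. The axis constants come from the intermediate value theorem plus \eqref{eq:S-monotone}, and the existence and uniqueness of $A_\kappa(B)$ are handled the same way. Continuity comes from a compactness/subsequence argument, and $\mathcal{C}^1$ regularity from two applications of the implicit function theorem with the same formulas for $\partial_A S$ and $\partial_B S$. Exhaustiveness follows from $S(A,B)\le S(0,B)<\kappa$ for $B>\bar B_\kappa$.

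The only real difference is the strict decrease of $A_\kappa$. You get it directly from the strict comparison property \eqref{eq:S-monotone}, while the paper deduces it from $A_\kappa'<0$ on $(0,\bar B_\kappa)$ and the mean value theorem. Your version is slightly cleaner because it does not depend on the $\mathcal{C}^1$ step. The sign $A_\kappa'=-\partial_B S/\partial_A S<0$, which the paper later uses in Lemma~\ref{lem:level-matching}, still follows immediately from your implicit-function step.
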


\begin{lemma}
\label{lem:level-matching}
Let $\kappa_1,\kappa_2\in\mathbb{R}$ satisfy $-\frac{c}{\beta}\leq \kappa_1<\kappa_2\leq \frac{r\theta(\mu_h-\mu_\ell)}{\beta(\beta+\theta)}-\frac{c}{\beta}$ and $(\kappa_1,\kappa_2)\neq (-\frac{c}{\beta}, \frac{r\theta(\mu_h-\mu_\ell)}{\beta(\beta+\theta)}-\frac{c}{\beta})$. 
Then there exists a unique pair $A,B\geq0$ with $A+B>0$ and $S(A,B)>0$ such that
\begin{equation}
\label{eq:F-kappa1-kappa2}
F_{A,B}\bigl(d(A,B)\bigr)=\kappa_1,\qquad F_{A,B}\bigl(u(A,B)\bigr)=\kappa_2,
\end{equation}
where values at infinite arguments are interpreted
as limits.
Moreover,
\[
A=0\iff \kappa_1=-\frac{c}{\beta},
\qquad
B=0\iff \kappa_2=\frac{r\theta(\mu_h-\mu_\ell)}{\beta(\beta+\theta)}-\frac{c}{\beta}.
\]
Consequently, $d(A,B)$ is finite if and only if $\kappa_1>-\frac{c}{\beta}$, and $u(A,B)$ is finite if and only if $\kappa_2<\frac{r\theta(\mu_h-\mu_\ell)}{\beta(\beta+\theta)}-\frac{c}{\beta}$.
\end{lemma}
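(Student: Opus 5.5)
The idea is to reduce the matching problem to a one-parameter search along the level curve of the area function. Set $\kappa:=\kappa_2-\kappa_1$. Since $\kappa_1\geq -\frac{c}{\beta}$, $\kappa_2\leq \frac{r\theta(\mu_h-\mu_\ell)}{\beta(\beta+\theta)}-\frac{c}{\beta}$, and the pair is not the extreme one, we have $\kappa\in\bigl(0,\frac{r\theta(\mu_h-\mu_\ell)}{\beta(\beta+\theta)}\bigr)$. So Lemma~\ref{lem:area-level} gives the curve $B\mapsto (A_\kappa(B),B)$, $B\in[0,\bar B_\kappa]$. It contains every nonnegative solution of $S(A,B)=\kappa$, and along it $A+B>0$.

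For any admissible $(A,B)$ we have $S(A,B)=F_{A,B}(u(A,B))-F_{A,B}(d(A,B))$. Hence \eqref{eq:F-kappa1-kappa2} is equivalent to two conditions: $S(A,B)=\kappa$, and $G(B):=F_{A_\kappa(B),B}\bigl(d(A_\kappa(B),B)\bigr)=\kappa_1$. Existence and uniqueness therefore reduce to showing that $G$ is continuous and strictly monotone on $[0,\bar B_\kappa]$, and to identifying its endpoint values.

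\textbf{Endpoint values.} At $B=\bar B_\kappa$ we have $A=0$, so $d=-\infty$. Since $\psi_+^\ell\to0$ as $x\to-\infty$ (Lemma~\ref{lem:psi}(c)) and $J\to -c/\beta$, this gives $G(\bar B_\kappa)=-\frac{c}{\beta}$. At $B=0$ we have $u=\infty$, and $F_{\bar A_\kappa,0}(\infty)=\frac{r\theta(\mu_h-\mu_\ell)}{\beta(\beta+\theta)}-\frac{c}{\beta}$ because $\psi_-^h\to0$ at $+\infty$. Thus $G(0)=\frac{r\theta(\mu_h-\mu_\ell)}{\beta(\beta+\theta)}-\frac{c}{\beta}-\kappa$.

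The hypotheses say exactly that $\kappa_1$ lies in $[G(\bar B_\kappa),G(0)]$. Strict monotonicity of $G$ (decreasing) plus the intermediate value theorem then gives a unique $B$. The two equivalences in the statement also follow: $A=0$ exactly when $B=\bar B_\kappa$, i.e.\ $\kappa_1=-c/\beta$, and $B=0$ exactly when $\kappa_2$ is maximal. Finiteness of $d$ and $u$ then comes from Lemma~\ref{lem:fAB}.

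\textbf{Continuity of $G$.} On $(0,\bar B_\kappa)$, $d(A,B)$ is a simple zero of $f_{A,B}$ by \eqref{eq:fAB-lower-slope}. The implicit function theorem makes $d$ a $\mathcal{C}^1$ function of $(A,B)$, hence of $B$ through $A_\kappa\in\mathcal{C}^1$. At the endpoints I would argue directly with the limits in Lemmas~\ref{lem:J} and~\ref{lem:psi}: $d\to-\infty$ as $A\downarrow0$, since $f_{A,B}<0$ near $d$ forces $A(\psi_-^h)'$ to balance $J'>0$ at points where $(\psi_-^h)'$ blows up. The term $A\psi_-^h(d)$ must also be shown to vanish, which needs a ratio estimate of $\psi_-^h/(\psi_-^h)'$.

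\textbf{Strict monotonicity (main obstacle).} Differentiate $G$ on $(0,\bar B_\kappa)$. Because $f_{A,B}(d)=0$, the envelope term drops out, leaving
\[
G'(B)=A_\kappa'(B)\,\psi_-^h(d)-\psi_+^\ell(d).
\]
Both terms are negative, since $A_\kappa'\le0$ and $\psi_\pm>0$, so $G'<0$. That $A_\kappa$ is nonincreasing is given by Lemma~\ref{lem:area-level}, so differentiability is the only delicate point. If differentiability is problematic, I would instead compare $B_1<B_2$ directly. Since $A_\kappa(B_2)<A_\kappa(B_1)$ and $B_2>B_1$, $F$ evaluated at any fixed point decreases. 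Combining this with the fact that $d(A,B)$ minimizes $F_{A,B}$ over $(-\infty,u]$ yields $G(B_2)<G(B_1)$, which avoids derivatives entirely.
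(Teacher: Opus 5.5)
Your overall route is the paper's: the same function $G_\kappa(B)=F_{A_\kappa(B),B}\bigl(d(A_\kappa(B),B)\bigr)$ on $[0,\bar B_\kappa]$, the same envelope formula $G_\kappa'(B)=A_\kappa'(B)\psi_-^h(d)-\psi_+^\ell(d)<0$, the same endpoint values, and the same uniqueness argument through the level curve. Differentiability is not actually delicate: Lemma~\ref{lem:area-level} already gives $A_\kappa\in\mathcal{C}^1((0,\bar B_\kappa))$, and the implicit function theorem handles $d$.

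What is missing is a proof that $G_\kappa$ is continuous at the endpoints. This step is indispensable. It is what justifies the intermediate value theorem on the closed interval, strict monotonicity on all of $[0,\bar B_\kappa]$, and the equivalences $A=0\iff\kappa_1=-\frac{c}{\beta}$ and $B=0\iff\kappa_2=\frac{r\theta(\mu_h-\mu_\ell)}{\beta(\beta+\theta)}-\frac{c}{\beta}$.

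Your sketch at $B=\bar B_\kappa$ does not close. The relation $A\,|(\psi_-^h)'(d)|=J'(d)-B(\psi_+^\ell)'(d)$ does not force $(\psi_-^h)'(d)$ to blow up as $A\downarrow0$. It is equally consistent with $d$ converging to the finite zero $u(0,\bar B_\kappa)$ of $f_{0,\bar B_\kappa}$. Ruling this out needs a separating point $x_0<u(0,\bar B_\kappa)$ at which $f$ stays positive for nearby parameters. Even granting $d\to-\infty$, you still need a bound on $\psi_-^h/|(\psi_-^h)'|$ at $-\infty$ to get $A\psi_-^h(d)\to0$; you name this bound but do not prove it. It is true, e.g.\ by splitting the defining integral at $t=1$. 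The endpoint $B=0$ is not addressed at all, although there the implicit function theorem does extend because $f_{\bar A_\kappa,0}'(d)>0$.

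The paper avoids all asymptotics with one device. Fix $B_0\in[0,\bar B_\kappa]$ and choose $x_0$ with $f_{A_\kappa(B_0),B_0}(x_0)>0$. By continuity of $A_\kappa$ this persists for $B$ near $B_0$, so $x_0\in(d,u)$, and Lemma~\ref{lem:fAB} gives
\[
G_\kappa(B)=F_{A_\kappa(B),B}(x_0)-\int_{-\infty}^{x_0}\bigl(f_{A_\kappa(B),B}(x)\bigr)^+\,\mathrm{d}x .
\]
Dominated convergence with the majorant $J'\in\mathcal{L}^1(\mathbb{R})$ then yields continuity at every $B_0$, endpoints included.

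Separately, drop your derivative-free fallback for monotonicity. The step $G(B_2)\le F_{A_\kappa(B_2),B_2}(d_1)$, with $d_1=d(A_\kappa(B_1),B_1)$, needs $d_1\le u(A_\kappa(B_2),B_2)$. That inequality can fail. For example, take $B_1=0$ and $B_2=\bar B_\kappa$: as $\kappa\downarrow0$ one has $d(\bar A_\kappa,0)\to+\infty$ while $u(0,\bar B_\kappa)\to-\infty$. The comparison could only be used locally, together with continuity of $u$ along the curve. It is unnecessary in any case, given the $\mathcal{C}^1$ regularity above.
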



\begin{figure}
    \centering
    \includegraphics[width=0.6\linewidth]{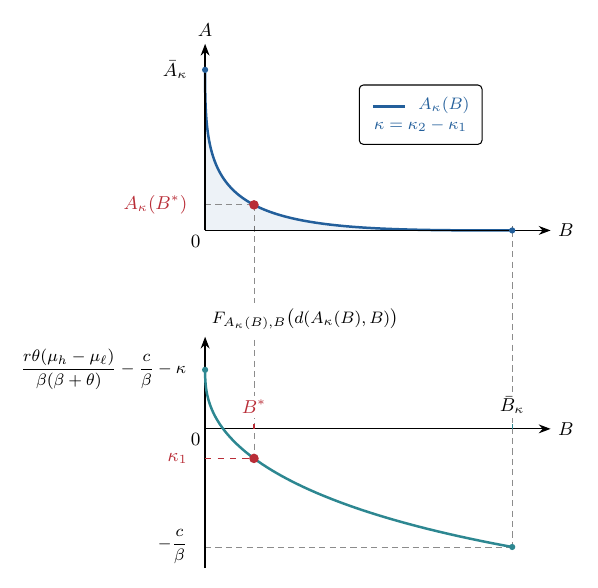}
    \caption{The figure above illustrates the curve $A_\kappa(B)$ in Lemma \ref{lem:area-level}, while the two figures together illustrates the pair $(A,B)$ in Lemma \ref{lem:level-matching} with $B=B^*$ and $A=A_\kappa(B^*)$.}
    \label{fig:lemma-4-6}
\end{figure}

Figure \ref{fig:lemma-4-6} illustrates the curve $A_\kappa(B)$ in Lemma \ref{lem:area-level} by $S(A,B)=\kappa$ and the pair $(A,B)$ in Lemma \ref{lem:level-matching} by characterizing $B=B^*$ using $F_{(A_k(B),B)}(d(A_k(B),B))=\kappa_1$ and letting $A=A_\kappa(B^*)$. 
Now we are ready to prove Proposition \ref{prop:existence}.

\begin{proof}[Proof of Proposition \ref{prop:existence}]
($a$) Apply Lemma \ref{lem:level-matching} by letting $\kappa_1=-K_{\ell}$ and $\kappa_2= \frac{r\theta(\mu_h-\mu_\ell)}{\beta(\beta+\theta)}-\frac{c}{\beta}$. It yields a pair $(A^*,0)$ with $A^*>0$ and a finite zero $d^*=d(A^*,0)$ satisfying
\[
F_{A^*,0}(d^*)=-K_{\ell},
\qquad
f_{A^*,0}(d^*)=0.
\]
This proves existence.

Let $(d,A)$ be any finite solution of this system. The equation $f_{A,0}(d)=0$ implies
\[
A=-\frac{J'(d)}{(\psi_-^h)'(d)}>0.
\]
Since $\psi_-^h(x)\to0$ as $x\to\infty$,
\[
\lim_{x\to\infty}F_{A,0}(x)=\lim_{x\to\infty}\left[A\psi_-^h(x)+J(x)\right]=\frac{r\theta(\mu_h-\mu_\ell)}{\beta(\beta+\theta)}-\frac{c}{\beta},
\]
where the last equality follows from \eqref{eq:limit-psi-h-2} and \eqref{eq:lim-J}. Consequently,
\[
\int_d^\infty f_{A,0}(x)\,\mathrm{d}x
=\frac{r\theta(\mu_h-\mu_\ell)}{\beta(\beta+\theta)}-\frac{c}{\beta}-F_{A,0}(d)
=\frac{r\theta(\mu_h-\mu_\ell)}{\beta(\beta+\theta)}-\frac{c}{\beta}+K_{\ell}
>0.
\]
Hence $\sup_{x\in\mathbb{R}}f_{A,0}(x)>0$. Lemma \ref{lem:fAB} shows that $d$ is its unique zero, so $d=d(A,0)$ and $u(A,0)=\infty$. Thus $(A,0)$ satisfies the conditions in \eqref{eq:F-kappa1-kappa2} of Lemma \ref{lem:level-matching} with $(\kappa_1,\kappa_2)=(-K_{\ell},\frac{r\theta(\mu_h-\mu_\ell)}{\beta(\beta+\theta)}-\frac{c}{\beta})$. Uniqueness gives $A=A^*$ and then Lemma \ref{lem:fAB} implies $d=d^*$.

($b$) Apply Lemma \ref{lem:level-matching} by letting $\kappa_1=-K_{\ell}$ and $\kappa_2=K_h$. Since both inequalities $-\frac{c}{\beta}<\kappa_1$ and $\kappa_2<\frac{r\theta(\mu_h-\mu_\ell)}{\beta(\beta+\theta)}-\frac{c}{\beta}$ are strict, the lemma yields a unique pair $A^*,B^*>0$ with $F_{A^*,B^*}\bigl(d(A^*,B^*)\bigr)=-K_{\ell}$ and $F_{A^*,B^*}\bigl(u(A^*,B^*)\bigr)=K_h$. Set
\[
d^*=d(A^*,B^*),
\qquad
u^*=u(A^*,B^*).
\]
Lemma \ref{lem:fAB} implies that both numbers are finite, that $d^*<u^*$, and that $f_{A^*,B^*}(d^*)=f_{A^*,B^*}(u^*)=0$.
This proves existence.

For uniqueness, let $(d,u,A,B)$ be any finite solution. The value-matching equations $F_{A,B}(d)=-K_{\ell}$ and $F_{A,B}(u)=K_h$ imply $d\neq u$. Moreover, $A\leq0$ and $B\leq0$ cannot hold simultaneously, because they would imply
\[
f_{A,B}(x)
=
J'(x)+A(\psi_-^h)'(x)-B(\psi_+^\ell)'(x)>0
\]
for every $x$, contradicting $f_{A,B}(d)=0$. Thus Lemma \ref{lem:minimizer} applies. It implies that $f_{A,B}$ is strictly positive between the two distinct zeros $d$ and $u$; otherwise there would be a nonpositive local minimum between them.
Since
\[
F_{A,B}(u)-F_{A,B}(d)=K_{\ell}+K_h>0,
\]
we must have $d<u$ and $\sup_{x\in\R}f_{A,B}(x)>0$.

Lemma \ref{lem:A>0,B>0} now gives $A,B>0$. Lemma \ref{lem:fAB} therefore identifies the zeros as $d=d(A,B)$ and $u=u(A,B)$. The uniqueness assertion in Lemma \ref{lem:level-matching} then implies $A=A^*$, $B=B^*$, and consequently $d=d^*$ and $u=u^*$.

($c$) Apply Lemma \ref{lem:level-matching} by letting $\kappa_1=-\frac{c}{\beta}$ and $\kappa_2=K_h$. It yields a pair $(0,B^*)$ with $B^*>0$ and a finite zero $u^*=u(0,B^*)$ satisfying
\[
F_{0,B^*}(u^*)=K_h,
\qquad
f_{0,B^*}(u^*)=0.
\]
This proves existence.

Let $(u,B)$ be any finite solution of this system. The equation $f_{0,B}(u)=0$ implies
\[
B=\frac{J'(u)}{(\psi_+^\ell)'(u)}>0.
\]
Since $\psi_+^\ell(x)\to0$ as $x\to-\infty$,
\[
\lim_{x\to-\infty}F_{0,B}(x)=\lim_{x\to-\infty} [-B\psi_+^{\ell}(x)+J(x)]=-\frac{c}{\beta},
\]
where the last equality follows from \eqref{eq:limit-psi-ell-1} and \eqref{eq:lim-J}. 
Therefore,
\[
\int_{-\infty}^u f_{0,B}(x)\,\mathrm{d}x
=F_{0,B}(u)-\left(-\frac{c}{\beta}\right)
=K_h+\frac{c}{\beta}
>0.
\]
Hence $\sup_{x\in\mathbb{R}}f_{0,B}(x)>0$. Lemma \ref{lem:fAB} implies that $u=u(0,B)$ is its unique zero and $d(0,B)=-\infty$.
Thus $(0,B)$ satisfies the conditions in \eqref{eq:F-kappa1-kappa2} of Lemma \ref{lem:level-matching} with $(\kappa_1,\kappa_2)=(-\frac{c}{\beta},K_h)$.
Uniqueness gives $B=B^*$ and then $u=u^*$.
\end{proof}

\section{Optimality of Proposed Policies}
\label{sec:optimality}

In this section, we prove Theorem~\ref{thm:main} by verification. We first establish a verification theorem in 
\S\ref{sub-sec:verification} and then apply it to the five proposed policies in \S\ref{sub-sec:optimality}.

\subsection{Verification Theorem}
\label{sub-sec:verification}

We denote a candidate function by $\widehat{V}$ and its first and second partial derivatives with respect to $x$ by $\widehat{V}_x(x,\mu)$ and $\widehat{V}_{xx}(x,\mu)$, respectively. Throughout this section, sums involving switching costs are taken only over finite switching times. 

\begin{proposition}[Verification Theorem]
\label{prop:verfication}
Let $\widehat{V}(\cdot,\mu_h)$ and $\widehat{V}(\cdot,\mu_\ell)$ be continuously differentiable functions on $\mathbb R$ whose
first derivatives are globally Lipschitz continuous. Suppose that
\begin{align}
\Gamma_h\widehat{V}(x,\mu_h)-\beta \widehat{V}(x,\mu_h)  +rx^+-c&\leq0, 
\label{eq:verification-inequality-1}\\
\Gamma_\ell \widehat{V}(x,\mu_\ell)-\beta \widehat{V}(x,\mu_\ell)
  +rx^+&\leq0
\label{eq:verification-inequality-2}
\end{align}
for almost every $x\in\mathbb R$, and that
\begin{equation}
\label{eq:verification-inequality-3}
-K_{\ell}\leq \widehat{V}(x,\mu_h)-\widehat{V}(x,\mu_\ell)\leq K_h
\qquad\text{for every }x\in\mathbb R.
\end{equation}
Then, for every initial state and mode $(x,\mu)$ and every admissible policy $\pi\in\Pi$,
\[
\mathcal J((x,\mu),\pi)\leq \widehat{V}(x,\mu).
\]
\end{proposition}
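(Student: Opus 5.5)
The plan is a standard verification argument: apply a generalized Itô formula to $e^{-\beta t}\widehat V(X(t),\mu(t))$ between consecutive switching times, then localize and let the horizon tend to infinity. Fix $(x,\mu)$ and $\pi=\{\tau_n\}\in\Pi$, and write $\tau_0:=0$ with the convention that on $[\tau_{n},\tau_{n+1})$ the mode is constant, equal to $\mu(\tau_n)$ (with $\mu(\tau_0)$ understood as $\mu$ on $[0,\tau_1)$). Since $\widehat V(\cdot,\mu_i)$ is $\mathcal C^1$ with globally Lipschitz derivative, $\widehat V_{xx}$ exists almost everywhere and is essentially bounded. Itô's formula therefore holds in the generalized form for $\mathcal C^1$ functions with absolutely continuous derivative (e.g.\ via mollification, or the Itô--Tanaka/occupation-time formula). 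The drift term then involves $\widehat V_{xx}$ only on a Lebesgue-null set of levels. Because $X$ has nondegenerate volatility $\sigma>0$, the occupation-time formula shows that the time $X$ spends in a null set is zero a.s., so the a.e.\ inequalities \eqref{eq:verification-inequality-1}--\eqref{eq:verification-inequality-2} can be used inside the time integrals.

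\textbf{Step 1 (between switches).} On each interval $[\tau_n\wedge T, \tau_{n+1}\wedge T)$ the mode is a fixed $\mu_i$ and $X$ follows the $\Gamma_i$ dynamics. Itô gives
\[
e^{-\beta (\tau_{n+1}\wedge T)}\widehat V(X(\tau_{n+1}\wedge T),\mu(\tau_n)) - e^{-\beta(\tau_n\wedge T)}\widehat V(X(\tau_n\wedge T),\mu(\tau_n))
\]
\[
= \int_{\tau_n\wedge T}^{\tau_{n+1}\wedge T} e^{-\beta t}(\Gamma_{\mu(\tau_n)}-\beta)\widehat V\,\dd t + M_{n},
\]
where $M_n$ is a stochastic integral with integrand $\sigma e^{-\beta t}\widehat V_x$. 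The running-payoff inequalities bound the $\dd t$ integral by $-\int e^{-\beta t}(rX^+-c\mathbf 1_{\{\mu=\mu_h\}})\dd t$.

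\textbf{Step 2 (at switches).} At a finite $\tau_{n+1}\le T$ the state is continuous and the mode jumps. By \eqref{eq:verification-inequality-3},
\[
\widehat V(X(\tau_{n+1}),\mu(\tau_{n+1}-))\ge \widehat V(X(\tau_{n+1}),\mu(\tau_{n+1}))-K(\mu(\tau_{n+1}-),\mu(\tau_{n+1})),
\]
and the same holds at $\tau_1=0$ if it occurs. Summing over $n$, the telescoping sum, which has only finitely many terms before $T$ since $\tau_n\to\infty$ a.s., gives
\[
\widehat V(x,\mu)\ge \int_0^{T}e^{-\beta t}\bigl(rX^+-c\mathbf 1_{\{\mu(t)=\mu_h\}}\bigr)\dd t-\sum_{\tau_n\le T}e^{-\beta\tau_n}K+e^{-\beta T}\widehat V(X(T),\mu(T))-\sum_n M_n.
\]

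\textbf{Step 3 (expectations and limit).} This is the main obstacle. Because $\widehat V_x$ is Lipschitz, it has linear growth, so $\widehat V$ has quadratic growth. The controlled OU process has $\E\sup_{t\le T}|X(t)|^2<\infty$: its drift is bounded by $\theta(\mu_h\vee|\mu_\ell|)+\theta|x|$ uniformly in the control, so a Gronwall/BDG argument applies. Hence the stochastic integrals are true martingales, and their expectations vanish. The infinite sum of $M_n$ is controlled by writing it as a single stochastic integral over $[0,T]$.

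Moreover, $\E[X(t)^2]\le C(1+x^2)$ uniformly in $t$, since mean reversion keeps the second moment bounded for either mode. Therefore $e^{-\beta T}\E|\widehat V(X(T),\mu(T))|\to0$ as $T\to\infty$.

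Finally, let $T\to\infty$. The running-payoff term converges by dominated convergence, using $\E\int e^{-\beta t}(r|X|+c)\dd t<\infty$. The cost sum converges by monotone convergence, because the costs are positive. If the expected cost is infinite then $\mathcal J=-\infty$ and the bound is trivial. This yields $\mathcal J((x,\mu),\pi)\le\widehat V(x,\mu)$.
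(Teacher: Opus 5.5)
Your proposal is correct and follows essentially the same route as the paper. Both arguments apply the generalized It\^o formula between switching times and use the bound $\widehat V(X(\tau_n),\mu(\tau_n))-\widehat V(X(\tau_n),\mu(\tau_n-))\le K(\mu(\tau_n-),\mu(\tau_n))$ at the jumps. Both then use quadratic growth of $\widehat V$ with the uniform second-moment bound to get the martingale property and transversality, and finish with dominated/monotone convergence after setting aside the infinite-expected-cost case.

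Two minor differences. Your occupation-time argument (using $\sigma>0$) for applying the almost-everywhere generator inequalities along the path is a detail the paper leaves implicit in its citation of the generalized It\^o formula. Your $\mathbb E\sup_{t\le T}|X(t)|^2$ bound is more than needed: the paper gets square integrability of the martingale directly from the pointwise moment bound in Lemma~\ref{lem:bound}.
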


We use the following uniform moment bound; its proof is given in Appendix \ref{app-sub:lem-bound}.

\begin{lemma}
\label{lem:bound}
For every initial state and mode $(x,\mu)$ and every admissible policy,
\begin{equation}
\label{eq:bound-lemma}
\mathbb E_{(x,\mu)}[(X(t))^2]
\leq3\left(x^2+\bar\mu^2+\frac{\sigma^2}{2\theta}\right),
\qquad t\geq0,
\end{equation}
where $\bar\mu:=\max\{|\mu_h|,|\mu_\ell|\}$.
\end{lemma}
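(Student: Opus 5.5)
We need to prove Lemma~\ref{lem:bound}, the uniform second-moment bound. The plan is to solve the controlled SDE explicitly by variation of constants. Under any admissible policy the control $\mu(\cdot)$ is adapted, piecewise constant and takes values in $\{\mu_\ell,\mu_h\}$. Because the drift is affine in $X$ with a bounded, adapted inhomogeneous term, applying It\^o's formula to $e^{\theta t}X(t)$ gives the pathwise representation
\[
X(t)=xe^{-\theta t}+\theta\int_0^t e^{-\theta(t-s)}\mu(s)\,\mathrm{d}s+\sigma\int_0^t e^{-\theta(t-s)}\,\mathrm{d}B(s).
\]
This requires only that $\mu$ be progressively measurable and bounded, which holds for every admissible policy since $\tau_n\to\infty$ almost surely. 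Hence the process is well defined on all of $[0,\infty)$.

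Next we apply the elementary inequality $(a+b+c)^2\leq 3(a^2+b^2+c^2)$ to the three terms and bound each one separately.

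\begin{itemize}
\item The first term satisfies $x^2e^{-2\theta t}\leq x^2$.
\item For the second term, $|\mu(s)|\leq\bar\mu$ gives the deterministic bound $\bigl|\theta\int_0^t e^{-\theta(t-s)}\mu(s)\,\mathrm{d}s\bigr|\leq\bar\mu(1-e^{-\theta t})\leq\bar\mu$. Its square is therefore at most $\bar\mu^2$ pathwise, and so also in expectation.
\item For the stochastic integral, the integrand is deterministic, so It\^o's isometry yields $\sigma^2\int_0^te^{-2\theta(t-s)}\,\mathrm{d}s=v_t\leq\sigma^2/(2\theta)$, with $v_t$ as in \eqref{eq:m-v}.
\end{itemize}

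Taking expectations and summing the three bounds gives \eqref{eq:bound-lemma} uniformly in $t\geq0$ and in the policy.

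The only mildly delicate step is justifying the variation-of-constants formula for a randomly switching drift. It follows directly from It\^o's product rule applied to $e^{\theta t}X(t)$: the finite-variation factor $e^{\theta t}$ has no covariation with $X$, and $\mu(s)$ enters only as a bounded adapted Lebesgue integrand. No further obstacle is expected.
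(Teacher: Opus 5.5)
Your proposal is correct and follows the paper's proof essentially line for line: the variation-of-constants representation of $X(t)$, the inequality $(a+b+c)^2\leq 3(a^2+b^2+c^2)$, and the three bounds $x^2$, $\bar\mu^2$, and $\sigma^2/(2\theta)$, the last obtained via It\^o's isometry. Your justification of the representation through It\^o's product rule applied to $e^{\theta t}X(t)$ is a detail the paper leaves implicit.
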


\begin{proof}[Proof of Proposition~\ref{prop:verfication}]
Fix $(x,\mu)$ and an admissible policy $\pi$. Global Lipschitz continuity of the first derivatives implies that they are locally absolutely continuous and that the second derivatives exist almost everywhere and are essentially bounded. There exists a constant $C>0$ such that, for both modes,
\begin{equation}
\label{eq:bound-V}
|\widehat{V}_x(y,\mu)|\leq C(1+|y|),
\qquad
|\widehat{V}(y,\mu)|\leq C(1+y^2),
\qquad y\in\mathbb R.
\end{equation}
Lemma~\ref{lem:bound} consequently gives
\[
\E_{(x,\mu)}\left[|\widehat{V}(X(t),\mu(t))|\right]
\leq C \left(1+\E_{(x,\mu)}[(X(t))^2]\right)
\leq C \left(1+3\left(x^2+\bar{\mu}^2+\frac{\sigma^2}{2\theta}\right)\right)
\]
and then
\begin{equation}
\label{eq:verification-transversality}
\lim_{t\to\infty}
\mathbb E_{(x,\mu)}
\left[e^{-\beta t}|\widehat{V}(X(t),\mu(t))|\right]=0.
\end{equation}
We also have 
\begin{align}
&\mathbb E_{(x,\mu)}\left[ \int_0^\infty e^{-\beta s} \left|rX(s)^+-c\mathbf1_{\{\mu(s)=\mu_h\}}\right| \,\mathrm ds\right] \nonumber\\
&\quad =  \int_0^\infty e^{-\beta s} \mathbb E_{(x,\mu)}\left[\left|rX(s)^+-c\mathbf1_{\{\mu(s)=\mu_h\}}\right| \right]\,\mathrm ds \nonumber\\
&\quad\leq \int_0^\infty e^{-\beta s} \left(r\mathbb E_{(x,\mu)}\left[|X(s)|\right]+c\right) \,\mathrm ds \nonumber\\
&\quad\leq \int_0^\infty e^{-\beta s} \left[r\sqrt{3\left(x^2+\bar\mu^2+\frac{\sigma^2}{2\theta}\right)}+c\right]\,\mathrm ds \nonumber\\
&\quad=\frac1\beta\left[r\sqrt{3\left(x^2+\bar\mu^2+\frac{\sigma^2}{2\theta}\right)}+c\right]\nonumber\\
&\quad<\infty,\label{eq:verification-running-integrability}
\end{align}
where the first equality follows from Tonelli's Theorem, the first inequality follows from 
\[
\left|rX(s)^+-c\mathbf1_{\{\mu(s)=\mu_h\}}\right|\leq rX(s)^++c\leq r|X(s)|+c,
\]
the second inequality follows from $\mathbb E_{(x,\mu)}\left[|X(s)|\right]\leq \sqrt{\mathbb E_{(x,\mu)}\left[(X(s))^2\right]}$ (Cauchy-Schwarz inequality) and \eqref{eq:bound-lemma}.

For $t\geq0$, define
\[
C_t:=\sum_{\{n:0\leq\tau_n\leq t\}}
e^{-\beta\tau_n}K(\mu(\tau_n-),\mu(\tau_n)),
\qquad C_\infty:=\lim_{t\to\infty}C_t.
\]
If $\mathbb E_{(x,\mu)}[C_\infty]=\infty$, then \eqref{eq:verification-running-integrability} implies 
$\mathcal J((x,\mu),\pi)=-\infty$, and the conclusion is immediate. 
It therefore suffices to consider policies for which $\mathbb E_{(x,\mu)}[C_\infty]<\infty$.

Since the first derivatives are locally absolutely continuous, the generalized It\^o formula applies on each interval between
switches; see, for example, Lemma~3.1 in \cite{DaiYao2013Part2} for the discounted formulation, or Theorem~71 in \cite{Protter2005}.
Applying this formula between successive switching times and including the possible initial switch gives
\begin{align}
\nonumber
e^{-\beta t}\widehat{V}(X(t),\mu(t)) 
=&\widehat{V}(x,\mu)+\int_0^t e^{-\beta s}
\Big[
\frac{\sigma^2}{2}\widehat{V}_{xx}(X(s),\mu(s))
\\[-2pt]
&
+\theta(\mu(s)-X(s))\widehat{V}_x(X(s),\mu(s))
-\beta \widehat{V}(X(s),\mu(s))
\Big]\,\mathrm ds
\nonumber\\
& +M_t
+\sum_{\{n:0\leq\tau_n\leq t\}}e^{-\beta\tau_n}
\Big[
\widehat{V}(X(\tau_n),\mu(\tau_n))
-\widehat{V}(X(\tau_n),\mu(\tau_n-))
\Big],\label{eq:Ito}
\end{align}
where $M_t:=\sigma\int_0^t e^{-\beta s}\widehat{V}_x(X(s),\mu(s-))\,\mathrm dB(s)$.
By \eqref{eq:bound-V} and Lemma~\ref{lem:bound},
\begin{align*}
&\mathbb E_{(x,\mu)}\left[\int_0^t e^{-2\beta s}|\widehat{V}_x(X(s),\mu(s-))|^2\,\mathrm ds\right]
\\ &\quad\leq 2C^2\int_0^t e^{-2\beta s} \left[1+3\left(x^2+\bar\mu^2+\frac{\sigma^2}{2\theta}\right)\right]\,\mathrm ds<\infty.
\end{align*}
Hence $M$ is a square-integrable martingale and $\mathbb E_{(x,\mu)}[M_t]=0$.

At every switching time, \eqref{eq:verification-inequality-3} implies
\[
\widehat{V}(X(\tau_n),\mu(\tau_n))-\widehat{V}(X(\tau_n),\mu(\tau_n-))
\leq K(\mu(\tau_n-),\mu(\tau_n)).
\]
Combining this inequality with the generator inequalities in \eqref{eq:Ito}, we obtain
\begin{align*}
e^{-\beta t}\widehat{V}(X(t),\mu(t))
\leq \widehat{V}(x,\mu)
-\int_0^t e^{-\beta s}
\left[rX(s)^+-c\mathbf1_{\{\mu(s)=\mu_h\}}\right]\,\mathrm ds
+M_t+C_t.
\end{align*}
All terms are integrable under the present finite-cost assumption. Taking expectations and rearranging gives
\begin{align*}
\widehat{V}(x,\mu)
\geq\mathbb E_{(x,\mu)}\left[
\int_0^t e^{-\beta s}
\left[rX(s)^+-c\mathbf1_{\{\mu(s)=\mu_h\}}\right]\,\mathrm ds
-C_t+e^{-\beta t}\widehat{V}(X(t),\mu(t))
\right].
\end{align*}
Letting $t\to\infty$, we use \eqref{eq:verification-running-integrability} for convergence of the running-profit integral in $\mathcal{L}^1$, monotone convergence for $C_t$, and \eqref{eq:verification-transversality} for the terminal term. We conclude that
\[
\widehat{V}(x,\mu)\geq\mathcal J((x,\mu),\pi).
\]
Since $\pi$ was arbitrary, the proposition follows.
\end{proof}

\subsection{Optimality of Proposed Policies}
\label{sub-sec:optimality}

For each case of Theorem~\ref{thm:main}, we show that the profit function of the proposed policy satisfies the hypotheses of Proposition~\ref{prop:verfication}. It therefore dominates the profit of every admissible policy. Since it is attained by the proposed policy, it equals the value function.

\begin{proof}[Proof of Theorem~\ref{thm:main}]

By the definition of admissibility, the proposed policies in five cases in Theorem \ref{thm:main} are admissible. We next check the conditions in Proposition \ref{prop:verfication}.

Before proving the the optimality of proposed candidate function, we first denote it as $\widehat{V}$. 
First we show that all candidate functions in five cases are continuously differentiable and their first derivatives are globally Lipschitz continuous. For the no-switching policies in parts ($a$) and ($c$), the continuous differentiability and Lipschitz continuity follow from $J_h$ and $J_{\ell}$; see Lemma \ref{lem:J}. For the threshold policies in parts ($b$), ($d$), and ($e$), continuity of the functions $\widehat{V}$ and their first derivatives at the thresholds follow from the value-matching and smooth-fit equations in
Proposition~\ref{prop:existence}. 

For the Lipschitz continuity, we take part ($b$) as an example, and the cases in parts ($d$) and ($e$) are similar. In part ($b$), we have
$\widehat{V}(x,\mu_\ell)=\mathcal{J}((x,\mu_\ell),(d^*,\infty))=J_{\ell}(x)$ and 
\begin{align*}
    \widehat{V}(x,\mu_h)=\mathcal{J}((x,\mu_h),(d^*,\infty))=
    \begin{cases}
        J_{\ell}(x)-K_{\ell} &\text{for $x\leq d^*$},\\
        J_h(x) + \frac{-J(d^*)-K_{\ell}}{\psi_{-}^h(d^*)}\psi_{-}^h(x) &\text{for $x>d^*$}.
    \end{cases}
\end{align*}
The first derivative of function $\widehat{V}(x,\mu_\ell)=J_\ell(x)$ is globally Lipschitz continuous from Lemma \ref{lem:J}. 
For $\widehat{V}(x,\mu_h)$, its derivative is Lipschitz on each side of $d^*$ and continuous at $d^*$ (by Lemma~\ref{lem:J} (b) and Lemma~\ref{lem:psi} ($d$)); hence it is globally Lipschitz.

It remains to check the three verification inequalities in \eqref{eq:verification-inequality-1}-\eqref{eq:verification-inequality-3}.
We will use the following equations, obtained from the ODEs for $J$, $\psi_-^h$, and $\psi_+^\ell$:
\begin{align}
\Gamma_h F_{A^*,B^*}(x)-\beta F_{A^*,B^*}(x)
&=c-\theta(\mu_h-\mu_\ell)\left[J_\ell'(x)+B^*(\psi_+^\ell)'(x)\right],\label{eq:verification-F-h}\\
\Gamma_\ell F_{A^*,B^*}(x)-\beta F_{A^*,B^*}(x)
&=c-\theta(\mu_h-\mu_\ell)\left[J_h'(x)+A^*(\psi_-^h)'(x)\right],\label{eq:verification-F-ell}
\end{align}
where $F$ is defined in \eqref{eq:F}, the first equation is from \eqref{eq:ODE-Jh-Jl-h}, \eqref{eq:ODE-psi-h}, and \eqref{eq:ODE-psi-ell-with-h}, and the second one is from  \eqref{eq:ODE-Jh-Jl-ell}, \eqref{eq:ODE-psi-l}, and \eqref{eq:ODE-psi-h-with-ell}.

In view of the fact that $F_{A^*,B^*}'=f_{A^*,B^*}$ (see \eqref{eq:F}-\eqref{eq:f}), whenever $F_{A^*,B^*}(d^*)=-K_{\ell}$ and $f_{A^*,B^*}(d^*)=0$, the equation \eqref{eq:verification-F-h} yields
\begin{equation}
\label{eq:verification-boundary-d}
\theta(\mu_h-\mu_\ell)\left[J_\ell'(d^*)+B^*(\psi_+^\ell)'(d^*)\right]+\beta K_{\ell}-c
=-\frac{\sigma^2}{2}f_{A^*,B^*}'(d^*).
\end{equation}
Similarly, whenever $F_{A^*,B^*}(u^*)=K_h$ and $f_{A^*,B^*}(u^*)=0$, the equation \eqref{eq:verification-F-ell} yields
\begin{equation}
\label{eq:verification-boundary-u}
c+\beta K_h-\theta(\mu_h-\mu_\ell)\left[J_h'(u^*)+A^*(\psi_-^h)'(u^*)\right]=\frac{\sigma^2}{2}f_{A^*,B^*}'(u^*).
\end{equation}

($a$) In this case, the parameter condition is $\frac{r\theta(\mu_{h}-\mu_{\ell})}{\beta(\beta+\theta)}-\frac{c}{\beta}\leq -K_{\ell}$, and the proposed no-switching policy that permanently adopts the lower mode has profit functions $\widehat{V}(x,\mu_{h})=J_\ell(x)-K_{\ell}$ and $\widehat{V}(x,\mu_{\ell})=J_\ell(x)$. 

By Lemma \ref{lem:J}, we have $J_\ell''>0$, $\lim_{x\to-\infty}J_{\ell}'(x)=0$, and $\lim_{x\to\infty}J_{\ell}'(x)=\frac{r}{\beta+\theta}$. Then, \eqref{eq:ODE-J-l-Gamma h} and $0<J_\ell'(x)<\frac{r}{\beta+\theta}$ give
\begin{align*}
\Gamma_h\widehat{V}(x,\mu_h)
-\beta \widehat{V}(x,\mu_h)+rx^+-c
&=\theta(\mu_h-\mu_\ell)J_\ell'(x)+\beta K_{\ell}-c\\
&\leq\frac{r\theta(\mu_h-\mu_\ell)}{\beta+\theta}
+\beta K_{\ell}-c
\leq0,
\end{align*}
where the last inequality is the parameter condition. We have proven \eqref{eq:verification-inequality-1}.

The ODE \eqref{eq:ODE-J-l} gives $\Gamma_\ell \widehat{V}(x,\mu_\ell)-\beta \widehat{V}(x,\mu_\ell)+rx^+=0$, proving \eqref{eq:verification-inequality-2}.

Finally, the difference of $\widehat{V}(x,\mu_{h})=J_\ell(x)-K_{\ell}$ and $\widehat{V}(x,\mu_{\ell})=J_\ell(x)$ is $-K_{\ell}$, 
so \eqref{eq:verification-inequality-3} holds.

($b$) In this case, the parameter condition is $-\frac{c}{\beta}<-K_{\ell} < \frac{r\theta(\mu_{h}-\mu_{\ell})}{\beta(\beta+\theta)}-\frac{c}{\beta}\leq K_h$, and the proposed $(d^*,\infty)$ policy has profit functions
\[
\widehat{V}(x,\mu_h)=
\begin{cases}
J_\ell(x)-K_{\ell},&x\leq d^*,\\
J_h(x)+A^*\psi_-^h(x),&x>d^*,
\end{cases}
\qquad
\widehat{V}(x,\mu_\ell)=J_\ell(x),
\]
where $(d^*,A^*)$ is the unique solution of \eqref{eq:optimal-equation-d} as stated in Proposition \ref{prop:existence} ($a$), and 
$A^*=\frac{-J(d^*)-K_{\ell}}{\psi_{-}^h(d^*)}$ from \eqref{eq:A-2}.

First, we check \eqref{eq:verification-inequality-1}. For $x>d^*$, the ODEs in \eqref{eq:ODE-J-h} for $J_h$ and in \eqref{eq:ODE-psi-h} for $\psi_{-}^h$ imply that $\Gamma_h \widehat{V}(x,\mu_h) -\beta \widehat{V}(x,\mu_h)+rx^+-c=0$.
For $x< d^*$, ODE \eqref{eq:ODE-J-l-Gamma h} implies 
\begin{align*}
\Gamma_h \widehat{V}(x,\mu_h)
-\beta \widehat{V}(x,\mu_h)+rx^+-c
&=\theta(\mu_h-\mu_\ell)J_\ell'(x)+\beta K_{\ell}-c\\
&\leq\theta(\mu_h-\mu_\ell)J_\ell'(d^*)+\beta K_{\ell}-c\\
&=-\frac{\sigma^2}{2}f_{A^*,0}'(d^*)
<0,
\end{align*}
where the first inequality follows from $J_\ell''>0$ (see Lemma \ref{lem:J}), the second equality follows from \eqref{eq:verification-boundary-d} with $B^*=0$, and the second inequality follows from Lemma~\ref{lem:fAB}.

The verification inequality \eqref{eq:verification-inequality-2} holds because $J_\ell$ satisfies \eqref{eq:ODE-J-l}.

Finally, the difference of $\widehat{V}(x,\mu_{h})$ and $\widehat{V}(x,\mu_{\ell})$ is $-K_{\ell}$ for $x\leq d^*$ and $F_{A^*,0}(x)$ for $x>d^*$.
The limits \eqref{eq:limit-psi-h-2} and \eqref{eq:lim-J} imply 
\[
\lim_{x\to\infty}F_{A^*,0}(x)=\lim_{x\to\infty}\left[A^*\psi_-^h(x)+J(x)\right]
=\frac{r\theta(\mu_h-\mu_\ell)}{\beta(\beta+\theta)}
-\frac c\beta\leq K_h.
\]
where the inequality follows from parameter condition in this case. 

Thus, $F_{A^*,0}'=f_{A^*,0}>0$ on $(d^*,\infty)$ (see Lemma~\ref{lem:fAB}) and $F_{A^*,0}(d^*)=-K_{\ell}$ imply that $-K_{\ell}<F_{A^*,0}(x)<K_h$ for $x>d^*$, and then \eqref{eq:verification-inequality-3} holds.

($c$) In this case, the parameter condition is $-K_{\ell}\leq -\frac{c}{\beta} < \frac{r\theta(\mu_{h}-\mu_{\ell})}{\beta(\beta+\theta)}-\frac{c}{\beta}\leq K_h$, and the proposed no-switching policy that permanently adopts the initial mode has profit functions $\widehat{V}(x,\mu_h)=J_h(x)$ and $\widehat{V}(x,\mu_\ell)=J_\ell(x)$.

Both inequalities \eqref{eq:verification-inequality-1} and \eqref{eq:verification-inequality-2} hold with equality by the ODEs for $J_h$ and $J_\ell$. Furthermore, the parameter conditions and the limits and monotonicity of $J$ (see Lemma \ref{lem:J}) give
\[
-K_{\ell}\leq-\frac c\beta<\widehat{V}(x,\mu_h)-\widehat{V}(x,\mu_\ell)=J(x)
<\frac{r\theta(\mu_h-\mu_\ell)}{\beta(\beta+\theta)}-\frac c\beta\leq K_h.
\]
Hence \eqref{eq:verification-inequality-3} also holds.

($d$)  In this case, the parameter condition is $-\frac{c}{\beta}< -K_{\ell} <K_h< \frac{r\theta(\mu_{h}-\mu_{\ell})}{\beta(\beta+\theta)}-\frac{c}{\beta}$, and the proposed $(d^*,u^*)$ policy has profit functions
\[
\widehat{V}(x,\mu_h)=
\begin{cases}
J_\ell(x)+B^*\psi_+^\ell(x)-K_{d},&x\leq d^*,\\
J_h(x)+A^*\psi_-^h(x),&x>d^*,
\end{cases}
\]
and
\[
\widehat{V}(x,\mu_\ell)=
\begin{cases}
J_\ell(x)+B^*\psi_+^\ell(x),&x<u^*,\\
J_h(x)+A^*\psi_-^h(x)-K_h,&x\geq u^*,
\end{cases}
\]
where $(d^*,u^*,A^*,B^*)$ is the unique solution of \eqref{eq:optimal-equation-du-1}-\eqref{eq:optimal-equation-du-2}, i.e., 
\[
F_{A^*,B^*}(d^*)=-K_{\ell},
\qquad F_{A^*,B^*}(u^*)=K_h,
\qquad f_{A^*,B^*}(d^*)=f_{A^*,B^*}(u^*)=0.
\]

First, the verification inequality \eqref{eq:verification-inequality-1} holds for $x>d^*$ with equality by the ODEs for $J_h$ in \eqref{eq:ODE-J-h} and $\psi_{-}^h$ in \eqref{eq:ODE-psi-h}.
For $x< d^*$, ODEs \eqref{eq:ODE-J-l-Gamma h} and \eqref{eq:ODE-psi-ell-with-h} imply 
\begin{align*}
\Gamma_h \widehat{V}(x,\mu_h)-\beta \widehat{V}(x,\mu_h)+rx^+-c
&=\theta(\mu_h-\mu_\ell)\left[J_\ell'(x)+B^*(\psi_+^\ell)'(x)\right]+\beta K_{\ell}-c\\
&\leq \theta(\mu_h-\mu_\ell)\left[J_\ell'(d^*)+B^*(\psi_+^\ell)'(d^*)\right]+\beta K_{\ell}-c\\
&=-\frac{\sigma^2}{2}f_{A^*,B^*}'(d^*)
<0,
\end{align*}
where the first inequality follows from $J_\ell''+B^*(\psi_+^\ell)''>0$ (due to $J_\ell''>0$ by Lemma \ref{lem:J} ($b$), $(\psi_+^\ell)''>0$ by Lemma \ref{lem:psi} ($b$), and $B^*>0$), the last equality follows from \eqref{eq:verification-boundary-d}, and the last inequality follows from $f_{A^*,B^*}'(d^*)>0$ in Lemma~\ref{lem:fAB}.

Similarly, the verification inequality \eqref{eq:verification-inequality-2} holds for $x<u^*$ with equality by the ODEs for $J_\ell$ in \eqref{eq:ODE-J-l} and $\psi_{+}^\ell$ in \eqref{eq:ODE-psi-l}. 
For $x> u^*$, ODEs \eqref{eq:ODE-J-h-Gamma ell} and \eqref{eq:ODE-psi-h-with-ell} imply 
\begin{align*}
\Gamma_\ell \widehat{V}(x,\mu_\ell) -\beta \widehat{V}(x,\mu_\ell)+rx^+
&=c+\beta K_h-\theta(\mu_h-\mu_\ell)\left[J_h'(x)+A^*(\psi_-^h)'(x)\right]\\
&\leq c+\beta K_h-\theta(\mu_h-\mu_\ell)\left[J_h'(u^*)+A^*(\psi_-^h)'(u^*)\right]\\
&=\frac{\sigma^2}{2}f_{A^*,B^*}'(u^*)
<0,
\end{align*}
where we used $J_h''+A^*(\psi_-^h)''>0$, \eqref{eq:verification-boundary-u}, and Lemma~\ref{lem:fAB}. 

Finally, 
\[
\widehat{V}(x,\mu_h)-\widehat{V}(x,\mu_\ell)
=\begin{cases}
-K_{\ell},&x\leq d^*,\\
F_{A^*,B^*}(x),&d^*<x<u^*,\\
K_h,&x\geq u^*.
\end{cases}
\]
Since $F_{A^*,B^*}'=f_{A^*,B^*}>0$ on $(d^*,u^*)$, $F_{A^*,B^*}$ increases strictly from $-K_{\ell}$ to $K_h$ on $[d^*,u^*]$.
Thus \eqref{eq:verification-inequality-3} holds.

($e$) In this case, the parameter condition is $-K_{\ell}\leq-\frac{c}{\beta} <K_h< \frac{r\theta(\mu_{h}-\mu_{\ell})}{\beta(\beta+\theta)}-\frac{c}{\beta}$, and the proposed  $(-\infty,u^*)$ policy has profit functions
\[
\widehat{V}(x,\mu_h)=J_h(x),
\qquad
\widehat{V}(x,\mu_\ell)=
\begin{cases}
J_\ell(x)+B^*\psi_+^\ell(x),&x<u^*,\\
J_h(x)-K_h,&x\geq u^*.
\end{cases}
\]

The ODE \eqref{eq:ODE-J-h} gives $\Gamma_h \widehat{V}(x,\mu_h)-\beta \widehat{V}(x,\mu_h)+rx^+-c=0$,proving \eqref{eq:verification-inequality-1}. 

For $x<u^*$, the ODEs in \eqref{eq:ODE-J-l} for $J_\ell$ and in \eqref{eq:ODE-psi-l} for $\psi_+^\ell$ imply that
$\Gamma_\ell \widehat{V}(x,\mu_\ell) -\beta \widehat{V}(x,\mu_\ell)+rx^+=0$. 
For $x> u^*$, ODE \eqref{eq:ODE-J-h-Gamma ell} implies 
\begin{align*}
\Gamma_\ell \widehat{V}(x,\mu_\ell)-\beta \widehat{V}(x,\mu_\ell)+rx^+
&=c+\beta K_h-\theta(\mu_h-\mu_\ell)J_h'(x)\\
&\leq c+\beta K_h-\theta(\mu_h-\mu_\ell)J_h'(u^*)\\
&=\frac{\sigma^2}{2}f_{0,B^*}'(u^*)
<0,
\end{align*}
using $J_h''>0$, \eqref{eq:verification-boundary-u} with $A^*=0$, and Lemma~\ref{lem:fAB}. We have proven \eqref{eq:verification-inequality-2}.

Finally, the difference between $\widehat{V}(x,\mu_h)$ and $\widehat{V}(x,\mu_\ell)$ is $F_{0,B^*}(x)$ for $x<u^*$ and $K_h$ for
$x\geq u^*$. Lemma~\ref{lem:fAB} gives $f_{0,B^*}>0$ on $(-\infty,u^*)$, and
\[
\lim_{x\to-\infty}F_{0,B^*}(x)=\lim_{x\to-\infty}\left[-B^*\psi_+^\ell(x)+J(x)\right]=-\frac c\beta\geq-K_{\ell},
\]
where the second equality follows from \eqref{eq:lim-J} and \eqref{eq:limit-psi-ell-1}, and the inequality follows from the parameter condition in this case. Thus $-K_{\ell}<F_{0,B^*}(x)<K_h$ for $x<u^*$, proving \eqref{eq:verification-inequality-3}.

\medskip
In every case, the candidate profit functions satisfy all hypotheses of Proposition~\ref{prop:verfication}.
Therefore, for every admissible policy $\pi$,
\begin{equation}
\label{eq:J<V}
\mathcal J((x,\mu),\pi)\leq \widehat{V}(x,\mu).
\end{equation}
Since the proposed admissible policy $\pi^*$ attains the candidate, i.e.,
\[
\widehat{V}(x,\mu)=\mathcal J((x,\mu),\pi^*)\leq V(x,\mu),
\]
taking the supremum in \eqref{eq:J<V} yields $V=\widehat{V}$. 
Hence $V$ is the value function and $\pi^*$ is optimal. The uniqueness of the finite thresholds was established in
Proposition~\ref{prop:existence}. This completes the proof.
\end{proof}

\section{Concluding Remarks}
\label{sec:conclusion} 

This paper studies an infinite-horizon discounted optimal switching problem in which the mean-reversion level of an Ornstein--Uhlenbeck
process can be switched between two values. The model combines state-dependent revenue, an additional running cost in the high mode, and positive, potentially asymmetric switching costs. We characterize the value function and identify five optimal policy regimes, including
permanent operation in one mode and policies with one or two finite switching thresholds. The classification is determined by comparing
the limiting profit differences between the two permanent modes with the switching costs. For the threshold regimes, we establish the existence and uniqueness of the finite thresholds through the value-matching and smooth-fit equations, and derive representations of the associated profit functions. A verification argument then establishes optimality over the full class of admissible switching policies. These results clarify how the benefits of a higher mean-reversion level, its additional running cost, and switching costs jointly determine whether switching is worthwhile and whether it occurs in one or both directions. Extensions to mode-dependent volatility, more than two operating modes, and alternative revenue functions provide directions for future research.

\bibliographystyle{ormsv080}
\bibliography{refs}

@article{cadenillas2013optimal,
  title={Optimal production management when demand depends on the business cycle},
  author={Cadenillas, Abel and Lakner, Peter and Pinedo, Michael},
  journal={Operations Research},
  volume={61},
  number={4},
  pages={1046--1062},
  year={2013},
  publisher={INFORMS}
}

@article{cadenillas2010optimal,
  title={Optimal control of a mean-reverting inventory},
  author={Cadenillas, Abel and Lakner, Peter and Pinedo, Michael},
  journal={Operations Research},
  volume={58},
  number={6},
  pages={1697--1710},
  year={2010},
  publisher={INFORMS}
}

@article{ZhangZhang2008,
    author = {Zhang, Hanqin and Zhang, Qing},
    title = {Trading a mean-reverting asset: Buy low and sell high},
    journal = {Automatica},
    year = {2008},
    volume = {44}, 
    number = {6},
    pages = {1511--1518}
}

@article{Suzuki2021,
    author = {Suzuki, Kiyoshi},
    title = {Infinite-horizon optimal switching regions for a pair-trading strategy with quadratic risk aversion considering simultaneous multiple switchings: A viscosity solution approach},
    journal = {Mathematics of Operations Research},
    year = {2021},
    volume = {46}, 
    number = {1},
    pages = {336-360}
}

@article{WuChao2014,
  title={Optimal control of a {B}rownian production/inventory system with average cost criterion},
  author={Wu, Jingchen and Chao, Xiuli},
  journal={Mathematics of Operations Research},
  volume={39},
  number={1},
  pages={163--189},
  year={2014},
  publisher={INFORMS}
}

@article{ngo2016optimal,
  title={Optimal switching for the pairs trading rule: A viscosity solutions approach},
  author={Ngo, Minh-Man and Pham, Huy\^{e}n},
  journal={Journal of Mathematical Analysis and Applications},
  volume={441},
  number={1},
  pages={403--425},
  year={2016},
  publisher={Elsevier}
}

@article{duckworth2001model,
  title={A model for investment decisions with switching costs},
  author={Duckworth, Kate and Zervos, Mihail},
  journal={The Annals of Applied Probability},
  pages={239--260},
  year={2001},
  volume={11},
  number={1},
  publisher={JSTOR}
}

@article{pham2009optimal,
  title={Optimal switching over multiple regimes},
  author={Pham, Huy\^{e}n and Ly Vath, Vathana and Zhou, Xun Yu},
  journal={SIAM Journal on Control and Optimization},
  volume={48},
  number={4},
  pages={2217--2253},
  year={2009},
  publisher={SIAM}
}

@article{bayraktar2010one,
  title={On the one-dimensional optimal switching problem},
  author={Bayraktar, Erhan and Egami, Masahiko},
  journal={Mathematics of Operations Research},
  volume={35},
  number={1},
  pages={140--159},
  year={2010},
  publisher={INFORMS}
}

@article{KanoriaLobelLu2024,
    author = {Kanoria, Yash and Lobel, Ilan and Lu, Jiaqi},
    title = {Managing customer churn via service mode control},
    journal = {Mathematics of Operations Research},
    year = {2024},
    volume = {49},
    number = {2},
    pages ={1192-1222}
}

@article{dayanik2003optimal,
  title={On the optimal stopping problem for one-dimensional diffusions},
  author={Dayanik, Savas and Karatzas, Ioannis},
  journal={Stochastic Processes and their Applications},
  volume={107},
  number={2},
  pages={173--212},
  year={2003},
  publisher={Elsevier}
}

@article{OrmeciMatogluVandeVate2011,
    author = {Ormeci Matoglu, Melda and Vande Vate, John},
    title = {Drift control with changeover costs},
    journal = {Operations Research},
    year = {2011},
    volume = {59},
    number = {2},
    pages = {427--439}
}

@article{Doshi1978,
    author = {Doshi, Bharat T.},
    title = {Two-mode control of Brownian motion with quadratic loss and switching costs},
    journal = {Stochastic Processes and their Applications},
    year = {1978},
    volume = {6},
    number = {3},
    pages = {277--289}
}

@article{Zhu2011,
    author = {Zhu, Chao},
    title = {Optimal control of the risk process in a regime-switching environment},
    journal = {Automatica},
    year = {2011},
    volume = {47},
    number = {8},
    pages = {1570-1579}
}

@article{Dixit1989,
    author = {Dixit, Avinash},
    title = {Entry and exit decisions under uncertainty},
    journal = {Journal of Political Economy},
    year = {1989},
    volume = {97},
    number = {3},
    pages = {620--638}
}

@article{BrekkeOksendal1994,
    author = {Brekke, Kjell Arne and {\O}ksendal, Bernt},
    title = {Optimal switching in an economic activity under uncertainty},
    journal = {SIAM Journal on Control and Optimization},
    year = {1994},
    volume = {32},
    number = {4},
    pages = {1021--1036}
}

@article{LyVathPham2007,
    author = {Ly Vath, Vathana and Pham, Huy{\^e}n},
    title = {Explicit solution to an optimal switching problem in the two-regime case},
    journal = {SIAM Journal on Control and Optimization},
    year = {2007},
    volume = {46},
    number = {2},
    pages = {395--426}
}

@article{Suzuki2016,
    author = {Suzuki, Kiyoshi},
    title = {Optimal switching strategy of a mean-reverting asset over multiple regimes},
    journal = {Automatica},
    year = {2016},
    volume = {67},
    pages = {33-45}
}

@article{EloeLYYZ2008,
  author    = {Eloe, P. and Liu, R. H. and Yatsuki, M. and Yin, G. and Zhang, Q.},
  title     = {Optimal Selling Rules in a Regime-Switching Exponential Gaussian Diffusion Model},
  journal   = {SIAM Journal on Applied Mathematics},
  year      = {2008},
  volume    = {69},
  number    = {3},
  pages     = {810--829},
  doi       = {10.1137/060652671}
}

@incollection{Pham2007,
  author    = {Pham, Huy\^{e}n},
  title     = {On the Smooth-Fit Property for One-Dimensional Optimal Switching Problem},
  booktitle = {S{\'e}minaire de Probabilit{\'e}s XL},
  editor    = {Donati-Martin, Catherine and {\'E}mery, Michel and Rouault, Alain and Stricker, Christophe},
  series    = {Lecture Notes in Mathematics},
  volume    = {1899},
  pages     = {187--199},
  year      = {2007},
  publisher = {Springer},
  address   = {Berlin, Heidelberg},
}

@book{karatzas1991brownian,
  title={Brownian Motion and Stochastic Calculus},
  author={Karatzas, Ioannis and Shreve, Steven E},
  volume={113},
  series={Graduate Texts in Mathematics},
  edition={2},
  year={1991},
  publisher={Springer-Verlag},
  address={New York},
  doi={10.1007/978-1-4612-0949-2}
}

@Book{Protter2005,
  author    = {Philip E. Protter},
  title     = {Stochastic Integration and Differential Equations},
  publisher = {Springer},
  year      = {2005},
  edition   = {2nd},
  address   = {Berlin},
  isbn      = {3540003134},
}

@article{Song2012,
  author    = {Qingshuo Song and Gang George Yin and Chao Zhu},
  title     = {Optimal switching with constraints and utility maximization of an indivisible market},
  journal   = {SIAM Journal on Control and Optimization},
  volume    = {50},
  number    = {2},
  pages     = {629--651},
  year      = {2012},
  doi       = {10.1137/10080823X},
  mrnumber  = {MR2913335?},
  zblnumber = {1335.49041},
}

@article{GuoZhang2004,
  author    = {Xin Guo and Qing Zhang},
  title     = {Closed-form solutions for perpetual {A}merican put options with regime switching},
  journal   = {SIAM Journal on Applied Mathematics},
  volume    = {64},
  number    = {6},
  pages     = {2034--2049},
  year      = {2004},
  doi       = {10.1137/S0036139903426083},
  mrnumber  = {MR2118865?},
  zblnumber = {1061.90082},
}

@article{SongStockbridgeZhu2011,
  author    = {Qingshuo Song and Richard H. Stockbridge and Chao Zhu},
  title     = {On optimal harvesting problems in random environments},
  journal   = {SIAM Journal on Control and Optimization},
  volume    = {49},
  number    = {2},
  pages     = {859--889},
  year      = {2011},
  doi       = {10.1137/100797333},
}

@article{CaoYao2018,
  author    = {Ping Cao and Dacheng Yao},
  title     = {Optimal Drift Rate Control and Impulse Control for a Stochastic Inventory/Production System},
  journal   = {SIAM Journal on Control and Optimization},
  volume    = {56},
  number    = {3},
  pages     = {1856--1883},
  year      = {2018},
}

@book{folland1999real,
  author    = {Folland, Gerald B.},
  title     = {Real Analysis: Modern Techniques and their Applications},
  edition   = {2},
  series    = {Pure and Applied Mathematics},
  publisher = {John Wiley \& Sons},
  address   = {New York},
  year      = {1999},
  isbn      = {0-471-31716-0},
}

@article{DaiYao2013Part2,
  author    = {J. G. Dai and Dacheng Yao},
  title     = {Brownian inventory models with convex holding cost, part 2: Discount-optimal controls},
  journal   = {Stochastic Systems},
  volume    = {3},
  number    = {2},
  pages     = {500--573},
  year      = {2013},
}

@article{NerloveArrow1962,
  author    = {Marc Nerlove and Kenneth J. Arrow},
  title     = {Optimal advertising policy under dynamic conditions},
  journal   = {Economica},
  volume    = {29},
  number    = {114},
  pages     = {129--142},
  year      = {1962},
}

@article{Raman2006,
  author    = {Kalyan Raman},
  title     = {Boundary value problems in stochastic optimal control of advertising},
  journal   = {Automatica},
  volume    = {42},
  number    = {8},
  pages     = {1357--1362},
  year      = {2006},
}

@article{GrossetViscolani2009,
  author    = {Luca Grosset and Bruno Viscolani},
  title     = {Optimal dynamic advertising with an adverse exogenous effect on brand goodwill},
  journal   = {Automatica},
  volume    = {45},
  number    = {4},
  pages     = {863--870},
  year      = {2009},
}

@article{Marinelli2007,
  author    = {Carlo Marinelli},
  title     = {The stochastic goodwill problem},
  journal   = {European Journal of Operational Research},
  volume    = {176},
  number    = {1},
  pages     = {389--404},
  year      = {2007},
}

\newpage

\begin{appendices}
\renewcommand{\thesection}{\Alph{section}}

\maketitle
\section{Proofs in Section \ref{sec:policies}}
\label{app:section-2}

\subsection{Proof of Lemma \ref{lem:J}}
\label{app-sub:lem-J}

    ($a$) We only prove \eqref{eq:J-h} for $J_h(x)$, and the proof for \eqref{eq:J-l} is very similar and is omitted. It follows from the first part of \eqref{eq:J-h-ell-0} that
    \begin{align}
    J_h(x) 
        &= \mathbb{E}_{(x,\mu_h)}\left[ \int_{0}^{\infty}e^{-\beta t}\left(r(X^{h}(t))^{+}-c \right)\,\mathrm{d}t \right] \nonumber\\
        &= r\int_{0}^{\infty}e^{-\beta t} \mathbb{E}_{(x,\mu_h)}\left[ (X^{h}(t))^{+} \right]\dd t -\frac{c}{\beta}\label{eq:J-h-app}
    \end{align}
where we interchanged expectation and integration for the nonnegative payoff term by Tonelli's theorem.
We next calculate $\mathbb{E}_{x}\left[ (X^{h}(t))^{+} \right]$. First we notice that for fixed $t>0$, $X^h(t)$ follows a normal distribution $\mathcal{N}(m_t^h(x), v_t)$ with $m_t^h$ and $v_t$ defined in \eqref{eq:m-v}; see e.g., Example 6.8 in \S5.6 of \cite{karatzas1991brownian}. Then, we have
\begin{align}
    \mathbb{E}_{(x,\mu_h)}\left[ (X^{h}(t))^{+} \right] 
    & = \int_0^{\infty} \mathbb{P}\left(X^{h}(t)>y\right)\,\dd y \nonumber\\
    & = \int_0^{\infty}\left[1 - \Phi\left( \frac{y - m_t^h(x)}{\sqrt{v_t}} \right)\right]\,\dd y\nonumber\\ 
    & = \int_{0}^{\infty} \Phi\left(\frac{m_t^h(x) - y}{\sqrt{v_t}}\right)\,\dd y\nonumber\\
    & = \int_{-\infty}^{m_t^h(x)} \Phi\left(\frac{z}{\sqrt{v_t}}\right)\,\dd z,\label{eq:E-X-h-app}
\end{align}
where the third equality is due to the symmetry of the normal distribution.
Hence, \eqref{eq:J-h} can be obtained immediately by \eqref{eq:J-h-app} and \eqref{eq:E-X-h-app}.

Moreover, by the Cauchy--Schwarz inequality,
\[
\mathbb{E}_{(x,\mu_h)}[(X^h(t))^+]\leq |m_t^h(x)|+\sqrt{v_t}\leq \max\{|x|,|\mu_h|\} +\frac{|\sigma|}{\sqrt{2\theta}}.
\]
Since $\beta>0$, the discounted payoff is integrable. Thus $J_h(x)$ is finite for every $x\in\mathbb{R}$, and the same argument applies to $J_\ell(x)$.

($b$) We only prove the properties for $J_h$, and the proof for $J_{\ell}$ is similar and is omitted. 
Denote the integrand in \eqref{eq:J-h} as $u(x,t)$, i.e.,
\[
u(x,t)=\int_{-\infty}^{m_t^h(x)} e^{-\beta t} \Phi\left(\frac{z}{\sqrt{v_t}}\right)\,\dd z.
\]
Differentiating with respect to $x$ gives 
\begin{align*}
    \frac{\partial u(x,t)}{\partial x}= e^{-(\beta+\theta)t}\Phi\!\left(\frac{m_t^h(x)}{\sqrt{v_t}}\right)
    \quad \text{and}\quad
    \frac{\partial^2 u(x,t)}{\partial x^2}=e^{-(\beta+2\theta)t}\frac{1}{\sqrt{v_t}}\,\phi\!\left(\frac{m_t^h(x)}{\sqrt{v_t}}\right),
\end{align*}
where $\phi=\Phi'$ is the PDF of the standard normal distribution.
The first one is bounded by $e^{-(\beta+\theta)t}\in \mathcal{L}^1((0,\infty))$, and the second by $\frac{e^{-(\beta+2\theta)t}}{\sqrt{2\pi v_t}}\in \mathcal{L}^1((0,\infty))$. Then, the differentiations and continuity of $J_h'(x)$ and $J_h''(x)$ are justified by dominated convergence (see e.g., Theorem 2.27 in \cite{folland1999real}) and thus
\begin{align}
    J_h'(x)&=r\int_0^\infty e^{-(\beta+\theta)t}\Phi\!\left(\frac{m_t^h(x)}{\sqrt{v_t}}\right)\dd t,\label{eq:J-h-derivate}\\
    J_h''(x)&=r\int_0^\infty e^{-(\beta+2\theta)t}\frac{1}{\sqrt{v_t}}\,\phi\!\left(\frac{m_t^h(x)}{\sqrt{v_t}}\right)\dd t.\label{eq:J-h-derivate-2}
\end{align}
Hence, we have proven $J_h\in\mathcal{C}^2(\mathbb{R})$. Also, $J_h'(x)>0$ and $J_h''(x)>0$ for $x\in\mathbb{R}$ can be obtained immediately by \eqref{eq:J-h-derivate}-\eqref{eq:J-h-derivate-2}.

We next check that $J_h$ satisfies ODE \ref{eq:ODE-J-h}. Since
\[
\frac{\dd}{\dd z}\left[z\Phi\left(\frac{z}{\sqrt{v_{t}}}\right)+ \sqrt{v_{t}}\phi\left(\frac{z}{\sqrt{v_{t}}}\right)\right] = \Phi\left(\frac{z}{\sqrt{v_{t}}}\right),
\]
we have 
\begin{align}
    \int_{-\infty}^{m_t^h(x)} \Phi\left(\frac{z}{\sqrt{v_t}}\right)\,\dd z 
    &=\left[z\Phi\left(\frac{z}{\sqrt{v_{t}}}\right)+ \sqrt{v_{t}}\phi\left(\frac{z}{\sqrt{v_{t}}}\right)\right] \Big\vert_{z=-\infty}^{m_t^h(x)}\nonumber\\
    &=  m_t^h(x)\Phi\left(\frac{m_t^h(x)}{\sqrt{v_t}}\right)+\sqrt{v_t}\phi\left(\frac{m_t^h(x)}{\sqrt{v_t}}\right),\label{eq:int=}
\end{align}
where we used 
\begin{align}
\label{eq:two-limits}
    \lim_{z\rightarrow-\infty}z \Phi(z)=\lim_{z\rightarrow-\infty}\frac{\Phi(z)}{z^{-1}}=\lim_{z\rightarrow-\infty}\frac{\phi(z)}{-z^{-2}}=\lim_{z\rightarrow-\infty}\frac{-z^{2}e^{-\frac{z^2}{2}}}{\sqrt{2\pi}}=0,\ \lim_{z\rightarrow-\infty}\phi(z)=0.
\end{align}
Substituting \eqref{eq:int=} into \eqref{eq:J-h} gives 
\begin{align*}
J_h(x)
&=r\int_0^\infty e^{-\beta t}
\left[m_t^h(x)\Phi\left(\frac{m_t^h(x)}{\sqrt{v_t}}\right)+\sqrt{v_t}\,\phi\left(\frac{m_t^h(x)}{\sqrt{v_t}}\right)\right]\dd t
-\frac{c}{\beta},
\end{align*}
which, together with \eqref{eq:J-h-derivate}-\eqref{eq:J-h-derivate-2}, implies that
\begin{align*}
&\Gamma_hJ_h(x)-\beta J_h(x)+rx^+-c\\
&\quad=r\int_0^\infty e^{-\beta t}
\Bigg[
\left(\theta(\mu_h-x)e^{-\theta t}-\beta m_t^h(x)\right)
\Phi\left(\frac{m_t^h(x)}{\sqrt{v_t}}\right)\\
&\qquad+\left(\frac{\sigma^2e^{-2\theta t}}{2\sqrt{v_t}}
-\beta\sqrt{v_t}\right)
\phi\left(\frac{m_t^h(x)}{\sqrt{v_t}}\right)
\Bigg]\dd t+rx^+.
\end{align*}
Observe that $\partial m_t^h(x)/\partial t=\theta(\mu_h-x)e^{-\theta t}$ and $\dd\sqrt{v_t}/\dd t=\frac{\sigma^2e^{-2\theta t}}{2\sqrt{v_t}}$.
Using the chain rule and the identity $\phi'(z)=-z\phi(z)$, we obtain
\begin{align*}
&\frac{\partial}{\partial t}
\left[
m_t^h(x)\Phi\left(\frac{m_t^h(x)}{\sqrt{v_t}}\right)
+\sqrt{v_t}\,\phi\left(\frac{m_t^h(x)}{\sqrt{v_t}}\right)
\right]
\\&\quad=\theta(\mu_h-x)e^{-\theta t}
\Phi\left(\frac{m_t^h(x)}{\sqrt{v_t}}\right)
+\frac{\sigma^2e^{-2\theta t}}{2\sqrt{v_t}}
\phi\left(\frac{m_t^h(x)}{\sqrt{v_t}}\right).
\end{align*}
Consequently,
\begin{align}
&\Gamma_hJ_h(x)-\beta J_h(x)+rx^+-c\nonumber\\
&\quad=r\int_0^\infty
\frac{\partial}{\partial t}
\left\{
e^{-\beta t}
\left[
m_t^h(x)\Phi\left(\frac{m_t^h(x)}{\sqrt{v_t}}\right)
+\sqrt{v_t}\,\phi\left(\frac{m_t^h(x)}{\sqrt{v_t}}\right)
\right]
\right\}dt+rx^+\nonumber\\
&\quad =r\left[
e^{-\beta t}
\left(
m_t^h(x)\Phi\left(\frac{m_t^h(x)}{\sqrt{v_t}}\right)
+\sqrt{v_t}\,\phi\left(\frac{m_t^h(x)}{\sqrt{v_t}}\right)
\right)
\right] \Big\vert_{t=0}^{t=\infty}+rx^+.\label{eq:ODE-partial}
\end{align}
To justify the improper integration by parts, the derivative computed above satisfies
\begin{align*}
&\left|
\frac{\partial}{\partial t}
\left\{
e^{-\beta t}
\left[
m_t^h(x)\Phi\left(\frac{m_t^h(x)}{\sqrt{v_t}}\right)
+\sqrt{v_t}\,\phi\left(\frac{m_t^h(x)}{\sqrt{v_t}}\right)
\right]
\right\}
\right|\\
&\qquad\leq e^{-\beta t}
\left[
\theta|\mu_h-x|e^{-\theta t}
+\frac{\sigma^2e^{-2\theta t}}{2\sqrt{2\pi v_t}}
+\beta\left(
|m_t^h(x)|+\frac{\sqrt{v_t}}{\sqrt{2\pi}}
\right)
\right].
\end{align*}
Since $v_t\sim\sigma^2t$ as $t\downarrow0$, this bound is $O(t^{-1/2})$ near zero and is therefore integrable there. As $t\to\infty$, the quantity in square brackets remains bounded, so the bound is integrable at infinity because $\beta>0$.
Thus the time derivative is absolutely integrable on $(0,\infty)$, and the integration by parts is valid.

If we can prove 
\begin{align}
    \lim_{t\downarrow 0}e^{-\beta t} \left[m_t^h(x)\Phi\left(\frac{m_t^h(x)}{\sqrt{v_t}}\right) +\sqrt{v_t}\,\phi\left(\frac{m_t^h(x)}{\sqrt{v_t}}\right)\right]&=x^+ \quad \text{and}\label{eq:lim-lemma-1}\\
    \lim_{t\uparrow\infty}e^{-\beta t}\left[m_t^h(x)\Phi\left(\frac{m_t^h(x)}{\sqrt{v_t}}\right)+\sqrt{v_t}\,\phi\left(\frac{m_t^h(x)}{\sqrt{v_t}}\right)\right]&=0,\label{eq:lim-lemma-2}
\end{align}
then \eqref{eq:ODE-partial} implies $\Gamma_hJ_h(x)-\beta J_h(x)+rx^+-c=r(0-x^+)+r x^+=0$, proving that $J_h$ satisfies ODE \eqref{eq:ODE-J-h}. 

We next show \eqref{eq:lim-lemma-1} and \eqref{eq:lim-lemma-2}.
If $x>0$, then $m_t^h(x)\to x>0$ and $\sqrt{v_t}\to0$ as $t\downarrow0$, so
\[
\frac{m_t^h(x)}{\sqrt{v_t}}\longrightarrow+\infty,
\qquad
m_t^h(x)\Phi\left(\frac{m_t^h(x)}{\sqrt{v_t}}\right)
\longrightarrow x \qquad\text{as }t\downarrow0.
\]
If $x<0$, then similarly
\[
\frac{m_t^h(x)}{\sqrt{v_t}}\longrightarrow-\infty,
\qquad
m_t^h(x)\Phi\left(\frac{m_t^h(x)}{\sqrt{v_t}}\right)
\longrightarrow0 \qquad\text{as }t\downarrow0.
\]
If $x=0$, the bound $0\leq\Phi\leq1$ gives
\[
\left|
m_t^h(0)\Phi\left(\frac{m_t^h(0)}{\sqrt{v_t}}\right)
\right|
\leq |m_t^h(0)|
=|\mu_h|(1-e^{-\theta t})
\longrightarrow0 \qquad\text{as }t\downarrow0.
\]
Also, we have 
\[
0\leq
\sqrt{v_t}\,\phi\left(\frac{m_t^h(x)}{\sqrt{v_t}}\right)
\leq \frac{\sqrt{v_t}}{\sqrt{2\pi}}
\longrightarrow 0
\qquad\text{as }t\downarrow0.
\]
Hence, \eqref{eq:lim-lemma-1} has been obtained. 

At the upper endpoint, note that $|m_t^h(x)|\leq\max\{|x|,|\mu_h|\}$ and $\sqrt{v_t}\leq\frac{|\sigma|}{\sqrt{2\theta}}$. 
Hence
\begin{align*}
&\left|
e^{-\beta t}
\left[
m_t^h(x)\Phi\left(\frac{m_t^h(x)}{\sqrt{v_t}}\right)
+\sqrt{v_t}\,\phi\left(\frac{m_t^h(x)}{\sqrt{v_t}}\right)
\right]
\right|\\
&\qquad\leq
e^{-\beta t}
\left(
\max\{|x|,|\mu_h|\}
+\frac{|\sigma|}{\sqrt{4\pi\theta}}
\right)
\longrightarrow0
\qquad\text{as }t\to\infty,
\end{align*}
proving \eqref{eq:lim-lemma-2}.

We next prove $J_h\in \mathcal{C}^\infty(\mathbb{R}\setminus\{0\})$. We have already established that $J_h\in \mathcal{C}^2(\mathbb{R})$ and ODE $\Gamma_hJ_h(x)-\beta J_h(x)+rx^+-c=0$ for $x\in\mathbb{R}$. Since $\sigma>0$, the ODE can be rewritten as
\[
J_h''(x)=\frac{2}{\sigma^2}\left[\beta J_h(x)-\theta(\mu_h-x)J_h'(x)-rx^++c\right].
\]
We now prove smoothness on $\mathbb{R}\setminus\{0\}$ by induction. For any integer $k\geq2$, suppose that $J_h\in \mathcal{C}^k(\mathbb{R}\setminus\{0\})$. Since the coefficients and the function $x\mapsto x^+$ are smooth on $\mathbb{R}\setminus\{0\}$, the right-hand side of the preceding identity belongs to $\mathcal{C}^{k-1}(\mathbb{R}\setminus\{0\})$.
Consequently, $J_h''\in \mathcal{C}^{k-1}(\mathbb{R}\setminus\{0\})$, which implies
\[
J_h\in \mathcal{C}^{k+1}(\mathbb{R}\setminus\{0\}).
\]
Starting from the established $J_h\in \mathcal{C}^2(\mathbb{R})$ and applying this argument inductively yields
\[
J_h\in \mathcal{C}^\infty(\mathbb{R}\setminus\{0\}).
\]
Therefore, $J_h\in \mathcal{C}^2(\mathbb{R})\cap \mathcal{C}^\infty(\mathbb{R}\setminus\{0\})$.

It remains to prove that $J_h'$ is Lipschitz continuous on $\mathbb{R}$. Since $\phi(z)\le 1/\sqrt{2\pi}$ for all $z\in\mathbb R$, we obtain
\[
|J_h''(x)|\le \frac{r}{\sqrt{2\pi}}
\int_0^\infty e^{-(\beta+2\theta)t}\frac{1}{\sqrt{v_t}}\,dt
=: M,
\]
for all $x\in\mathbb R$. 
The constant $M$ is finite because
\begin{align*}
    \int_{0}^{\infty} \frac{e^{-(\beta+2\theta)t}}{\sqrt{1-e^{-2\theta t}}} \dd t
    &= \int_{1}^{0} \frac{x^{\frac{\beta}{2\theta}+1}}{\sqrt{1-x}}
    \left(-\frac{1}{2\theta x}\right) \dd x \\
    &= \frac{1}{2\theta}\int_{0}^{1}
    x^{\frac{\beta}{2\theta}}(1-x)^{-\frac{1}{2}}\dd x \\
    &= \frac{1}{2\theta}\mathrm{B}
    \left(\frac{\beta}{2\theta}+1,\frac{1}{2}\right).
\end{align*}
where $\mathrm{B}(\frac{\beta}{2\theta}+1,\frac{1}{2})$ denotes the Beta function with parameters $p=\frac{\beta}{2\theta}+1$ and $q=\frac{1}{2}$, which is finite because both parameters are positive. Therefore, for any $x,y\in\mathbb R$, by the mean value theorem applied to $J_h'$, there exists $\xi$ between $x$ and $y$ such that
\[
|J_h'(x)-J_h'(y)|=|J_h''(\xi)|\cdot |x-y|\le M|x-y|.
\]
Hence $J_h'$ is Lipschitz continuous on $\mathbb R$.

($c$) We only provide the details for the limits of $J_h$ and $J_h'$. The limits of $J_\ell$ and $J_\ell'$ are similar and are omitted. 

First, the definition of $m_t^h(x)=\mu_h+(x-\mu_h)e^{-\theta t}$ shows that $\int_{-\infty}^{m_t^h(x)} \Phi\left(\frac{z}{\sqrt{v_t}}\right)\,\dd z$ is strictly increasing on $x\in\R$ for any $t>0$, thus
\begin{align*}
    \lim_{x\rightarrow\infty}\int_{-\infty}^{m_t^h(x)} \Phi\left(\frac{z}{\sqrt{v_t}}\right)\,\dd z 
    =  \int_{-\infty}^{\infty} \Phi\left(\frac{z}{\sqrt{v_t}}\right)\,\dd z
    \geq \int_0^\infty \Phi(0)\,\dd z
    =\infty.
\end{align*}
Therefore, by the monotone convergence theorem,
\begin{align*}
    \lim_{x\rightarrow\infty}J_{h}(x)&=\lim_{x\rightarrow\infty}\left[r\int_{0}^{\infty}\int_{-\infty}^{m_t^h(x)} e^{-\beta t} \Phi\left(\frac{z}{\sqrt{v_t}}\right)\,\dd z\dd t -\frac{c}{\beta}\right]\\
    &=r\int_{0}^{\infty}\lim_{x\rightarrow\infty}\left[\int_{-\infty}^{m_t^h(x)} e^{-\beta t} \Phi\left(\frac{z}{\sqrt{v_t}}\right)\,\dd z\right]\dd t -\frac{c}{\beta}\\
    &=\infty.
\end{align*}

Second, for all $x\leq 0$ and $t>0$,
\begin{align*}
    \int_{-\infty}^{m_t^h(x)} e^{-\beta t}\Phi\left(\frac{z}{\sqrt{v_t}}\right)\,\dd z
    &\leq e^{-\beta t}\int_{-\infty}^{m_t^h(0)} \Phi\left(\frac{z}{\sqrt{v_t}}\right)\,\dd z\\
    &=e^{-\beta t}\left[m_t^h(0)\Phi\left(\frac{m_t^h(0)}{\sqrt{v_t}}\right)+\sqrt{v_t}\phi\left(\frac{m_t^h(0)}{\sqrt{v_t}}\right)\right]\\
    &\leq e^{-\beta t}\left(\abs{\mu_h}+\frac{\sigma}{\sqrt{4\pi \theta}}\right),
\end{align*}
where the first equality follows from \eqref{eq:int=}, and the second inequality follows from $m^{h}_{t}(0)=\mu_{h}(1-e^{-\theta t})$ and $v_{t}=\frac{\sigma^2}{2\theta}(1-e^{-2\theta t})$.
Then, by the dominated convergence theorem, we obtain
\begin{align*}
    \lim_{x\rightarrow-\infty}J_{h}(x)&=\lim_{x\rightarrow-\infty}\left[r\int_{0}^{\infty}\int_{-\infty}^{m_t^h(x)} e^{-\beta t} \Phi\left(\frac{z}{\sqrt{v_t}}\right)\,\dd z\dd t -\frac{c}{\beta}\right]\\
    &=r\int_{0}^{\infty}\lim_{x\rightarrow-\infty}\left[\int_{-\infty}^{m_t^h(x)} e^{-\beta t} \Phi\left(\frac{z}{\sqrt{v_t}}\right)\,\dd z\right]\dd t -\frac{c}{\beta}\\
    &=r\int_{0}^{\infty}e^{-\beta t}\lim_{x\rightarrow-\infty}\left[m_t^h(x)\Phi\left(\frac{m_t^h(x)}{\sqrt{v_t}}\right)+\sqrt{v_t}\phi\left(\frac{m_t^h(x)}{\sqrt{v_t}}\right)\right]\dd t -\frac{c}{\beta}\\
    &=-\frac{c}{\beta},
\end{align*}
where the third equality follows from \eqref{eq:int=}, and the last equality follows from  \eqref{eq:two-limits}.

We next calculate $\lim_{x\rightarrow-\infty}J_{h}'(x)$ and $\lim_{x\rightarrow\infty}J_{h}'(x)$. 
We have that $e^{-\left(\beta+\theta\right)t}\Phi\left( \frac{m_{t}^{h}(x)}{\sqrt{v_{t}}} \right)\geq0$ is strictly increasing in $x\in\R$, and 
\begin{align*}
    \lim_{x\rightarrow-\infty}e^{-\left(\beta+\theta\right)t}\Phi\left( \frac{m_{t}^{h}(x)}{\sqrt{v_{t}}} \right) = 0 \quad \text{and}\quad 
    \lim_{x\rightarrow\infty}e^{-\left(\beta+\theta\right)t}\Phi\left( \frac{m_{t}^{h}(x)}{\sqrt{v_{t}}} \right) = e^{-\left(\beta+\theta\right)t}.
\end{align*}
Thus, it follows from expression of $J_h'$ in \eqref{eq:J-h-derivate} and the dominated convergence theorem that
\begin{align*}
    &\lim_{x\rightarrow-\infty}J_h'(x)= r\int_{0}^{\infty}\lim_{x\rightarrow-\infty}e^{-\left(\beta+\theta\right)t}\Phi\left( \frac{m_{t}^{h}(x)}{\sqrt{v_{t}}} \right)\dd t=0, \\
    &\lim_{x\rightarrow\infty}J_h'(x)= r\int_{0}^{\infty}\lim_{x\rightarrow\infty}e^{-\left(\beta+\theta\right)t}\Phi\left( \frac{m_{t}^{h}(x)}{\sqrt{v_{t}}} \right)\dd t = \frac{r}{\beta+\theta}.    
\end{align*}

($d$) First, \eqref{eq:J-explicit} can be obtained directly by $J=J_h-J_\ell$ with \eqref{eq:J-h} and \eqref{eq:J-l}.

Second, it follows from $J=J_h-J_\ell$ and $J_h,J_\ell\in\mathcal{C}^{2}(\R)\cap \mathcal{C}^{\infty}(\R\backslash \{0\})$ in part ($b$) that 
\[
J\in\mathcal{C}^{2}(\R)\cap \mathcal{C}^{\infty}(\R\backslash \{0\}).
\]
Since we have proven \eqref{eq:ODE-J-h}-\eqref{eq:ODE-J-l} in part ($b$), the ODE \eqref{eq:ODE-Jh-Jl-h} in Remark \ref{rem:lemma-1} holds, i,e,
\[
\Gamma_{h}J(x) - \beta J(x)-c+\theta(\mu_{h}-\mu_{\ell})J_{\ell}'(x)=0,
\]
Rewriting it as
\[
J''(x)=\frac{2}{\sigma^2}\left[\beta J(x)-\theta(\mu_h-x)J'(x)+c-\theta(\mu_h-\mu_\ell)J_\ell'(x)\right],
\]
whose right-hand side belongs to $\mathcal{C}^{1}(\R)$ because $J\in\mathcal{C}^{2}(\R)$ and $J_{\ell}'\in\mathcal{C}^{1}(\R)$. Thus, the left-hand side $J''\in\mathcal{C}^{1}(\R)$, i.e., $J\in\mathcal{C}^{3}(\R)$ and then $J\in\mathcal{C}^{3}(\R)\cap \mathcal{C}^{\infty}(\R\backslash \{0\})$.

We next prove that $J'(x)>0$ for $x\in\mathbb{R}$. It follows from \eqref{eq:J-explicit} that
\begin{align*}
    J'(x) & = r\int_{0}^{\infty} e^{-\beta t} \left[\Phi\left(\frac{m_t^h(x)}{\sqrt{v_t}}\right)\frac{\partial m_t^h(x)}{\partial x}-\Phi\left(\frac{m_t^\ell(x)}{\sqrt{v_t}}\right)\frac{\partial m_t^{\ell}(x)}{\partial x}\right]\,\dd t \\
    &= r\int_{0}^{\infty} e^{-(\beta+\theta) t} \left[\Phi\left(\frac{m_t^h(x)}{\sqrt{v_t}}\right)-\Phi\left(\frac{m_t^\ell(x)}{\sqrt{v_t}}\right)\right]\,\dd t\\
    &>0,
\end{align*}
where the second equality follows from $\frac{\partial m_t^h(x)}{\partial x}=\frac{\partial m_t^{\ell}(x)}{\partial x}=e^{-\theta t}$, and the inequality follows from that $\Phi\left(\frac{m_t^h(x)}{\sqrt{v_t}}\right)>\Phi\left(\frac{m_t^\ell(x)}{\sqrt{v_t}}\right)$ holds for any $x\in\mathbb{R}$ and $t\in(0,\infty)$ since $m^h_t(x)-m^{\ell}_{t}(x)=(\mu_h-\mu_{\ell})e^{-\theta t}>0$. Hence, we have proven that $J'(x)>0$ in $x\in\mathbb{R}$.

The limits $\lim_{x\to-\infty}J(x)=-\frac{c}{\beta}$ and $\lim_{x\to-\infty}J'(x)=\lim_{x\to\infty}J'(x)=0$ can be obtained immediately by the limits in part ($c$) with noting $J=J_h-J_\ell$. 

It remains to prove $\lim_{x\to\infty}J(x)=\frac{r\theta(\mu_{h}-\mu_{\ell})}{\beta(\beta+\theta)}-\frac{c}{\beta}$.
Since 
\[
\Phi\left(\frac{m_t^{\ell}(x)}{\sqrt{v_t}}\right)\leq \Phi\left(\frac{z}{\sqrt{v_t}}\right)
\leq \Phi\left(\frac{m_t^{h}(x)}{\sqrt{v_t}}\right) \quad 
\text{for $z\in[m_t^{\ell}(x),m_t^h(x)]$},
\]
we have 
\begin{align}
\int_{m_t^{\ell}(x)}^{m_t^h(x)} \Phi\left(\frac{z}{\sqrt{v_t}}\right)\,\dd z
&\geq \int_{m_t^{\ell}(x)}^{m_t^h(x)}  \Phi\left(\frac{m_t^{\ell}(x)}{\sqrt{v_t}}\right)\,\dd z
\nonumber\\
&=  \Phi\left(\frac{m_t^{\ell}(x)}{\sqrt{v_t}}\right)\left[(\mu_h-\mu_{\ell})\left(1-e^{-\theta t}\right)\right]\label{eq:int-geq-app}\\
\int_{m_t^{\ell}(x)}^{m_t^h(x)} \Phi\left(\frac{z}{\sqrt{v_t}}\right)\,\dd z
&\leq \int_{m_t^{\ell}(x)}^{m_t^h(x)}  \Phi\left(\frac{m_t^{h}(x)}{\sqrt{v_t}}\right)\,\dd z
\nonumber\\
&=  \Phi\left(\frac{m_t^{h}(x)}{\sqrt{v_t}}\right)\left[(\mu_h-\mu_{\ell})\left(1-e^{-\theta t}\right)\right]\label{eq:int-leq-app}
\end{align}
where both equalities follow from $m_t^h(x)-m_t^{\ell}(x)=(\mu_h-\mu_{\ell})\left(1-e^{-\theta t}\right)$.
Notice that 
\begin{align*}
    \lim_{x\to\infty}\Phi\left(\frac{m_t^{\ell}(x)}{\sqrt{v_t}}\right)\left[(\mu_h-\mu_{\ell})\left(1-e^{-\theta t}\right)\right]
    &=  \lim_{x\to\infty}\Phi\left(\frac{m_t^{h}(x)}{\sqrt{v_t}}\right)\left[(\mu_h-\mu_{\ell})\left(1-e^{-\theta t}\right)\right]\\
    &=(\mu_h-\mu_{\ell})\left(1-e^{-\theta t}\right),
\end{align*}
which, together with \eqref{eq:int-geq-app}-\eqref{eq:int-leq-app}, imply that
\begin{align}
\label{eq:two-lim-app}
    \lim_{x\to\infty}\int_{m_t^{\ell}(x)}^{m_t^h(x)} \Phi\left(\frac{z}{\sqrt{v_t}}\right)\,\dd z
    = (\mu_h-\mu_{\ell})\left(1-e^{-\theta t}\right).
\end{align}
Further, noticing that
\begin{align*}
    0\leq \int_{m_t^{\ell}(x)}^{m_t^h(x)} e^{-\beta t} \Phi\left(\frac{z}{\sqrt{v_t}}\right)\,\dd z 
    &\leq \int_{m_t^{\ell}(x)}^{m_t^h(x)} e^{-\beta t} \,\dd z
    =e^{-\beta t}\left[(\mu_h-\mu_{\ell})\left(1-e^{-\theta t}\right)\right]
\end{align*}
and 
\[
\int_0^{\infty} e^{-\beta t}\left[(\mu_h-\mu_{\ell})\left(1-e^{-\theta t}\right)\right]\,\dd t
=(\mu_h-\mu_\ell)\left(\frac{1}{\beta}-\frac{1}{\beta+\theta}\right)
<\infty
\]
we, using the dominated convergence theorem, have
\begin{align*}
    \lim_{x\to\infty}J(x) 
    & =  \lim_{x\to\infty}r\int_{0}^{\infty}\int_{m_t^{\ell}(x)}^{m_t^h(x)} e^{-\beta t} \Phi\left(\frac{z}{\sqrt{v_t}}\right)\,\dd z\dd t -\frac{c}{\beta}\\
    &= r\int_{0}^{\infty}e^{-\beta t}\left[\lim_{x\to\infty}\int_{m_t^{\ell}(x)}^{m_t^h(x)}  \Phi\left(\frac{z}{\sqrt{v_t}}\right)\,\dd z\right]\dd t -\frac{c}{\beta}\\
    &=r\int_{0}^{\infty}e^{-\beta t}(\mu_h-\mu_{\ell})\left(1-e^{-\theta t}\right)\dd t -\frac{c}{\beta}\\
    &=\frac{r\theta(\mu_{h}-\mu_{\ell})}{\beta(\beta+\theta)}-\frac{c}{\beta},
\end{align*}
where the third equality follows from \eqref{eq:two-lim-app}.
\qed

\subsection{Proof of Lemma \ref{lem:psi}}
\label{app-sub:lem-psi}

    ($a$) Substituting $\psi_{+}^h(x)$ and $\psi_{-}^h(x)$ into \eqref{eq:ODE-psi-h} immediately shows that both $\psi_{+}^h(x)$ and $\psi_{-}^h(x)$ are two solutions to \eqref{eq:ODE-psi-h}. In fact, they are two fundamental solutions to \eqref{eq:ODE-psi-h}, i.e., any solution to \eqref{eq:ODE-psi-h} is their linear combination; see e.g., page 406 in \cite{ngo2016optimal}. 
    The same argument shows that $\psi_{+}^\ell(x)$ and $\psi_{-}^\ell(x)$ are two fundamental solutions to \eqref{eq:ODE-psi-l}.

($b$) Differentiating \eqref{eq:psi-h-} with respect to $x$ gives 
    \begin{align*}
        (\psi_{-}^h)'(x)=-\frac{\sqrt{2\theta}}{\sigma}\int_{0}^{\infty}t^{\frac{\beta}{\theta}}\exp\left(-\frac{t^{2}}{2}-\frac{\sqrt{2\theta}}{\sigma} (x-\mu_h)t\right)\dd t<0.
    \end{align*}
    The remaining claims follow similarly and are omitted.

($c$)  First, we have 
    \begin{align*}
        \lim_{x\to-\infty}\psi_{-}^h(x)&=\lim_{x\to-\infty}\int_{0}^{\infty}t^{\frac{\beta}{\theta}-1}\exp\left(-\frac{t^{2}}{2}-\frac{\sqrt{2\theta}}{\sigma} (x-\mu_h)t\right)\dd t\\
        &\geq \lim_{x\to-\infty}\int_{1}^{2}t^{\frac{\beta}{\theta}-1}\exp\left(-\frac{t^{2}}{2}-\frac{\sqrt{2\theta}}{\sigma} (x-\mu_h)t\right)\dd t\\
        &=\int_{1}^{2}t^{\frac{\beta}{\theta}-1}\lim_{x\to-\infty}\exp\left(-\frac{t^{2}}{2}-\frac{\sqrt{2\theta}}{\sigma} (x-\mu_h)t\right)\dd t\\
        &=\infty,
    \end{align*}
    where the interchange of the limit and the integral follows from the monotone convergence theorem.
    The proofs of the other limit in \eqref{eq:limit-psi-h-1} and the two limits in \eqref{eq:limit-psi-ell-2} are very similar and are omitted.

    We next prove the first limit in \eqref{eq:limit-psi-h-2}. The proofs of the other limit in \eqref{eq:limit-psi-h-2} and the two limits in \eqref{eq:limit-psi-ell-1} are very similar and are omitted.
    For any $t>0$, we have that $t^{\frac{\beta}{\theta}-1}\exp\left(-\frac{t^{2}}{2}-\frac{\sqrt{2\theta}}{\sigma} (x-\mu_h)t\right)$ is strictly decreasing in $x\in[\mu_h,\infty)$, and 
    \begin{align*}
        \lim_{x\to\infty}t^{\frac{\beta}{\theta}-1}\exp\left(-\frac{t^{2}}{2}-\frac{\sqrt{2\theta}}{\sigma} (x-\mu_h)t\right) = 0.
    \end{align*}
    Furthermore, for any $x\geq \mu_h$ and $t>0$, we have
    \begin{align*}
        0\leq t^{\frac{\beta}{\theta}-1}\exp\left(-\frac{t^{2}}{2}-\frac{\sqrt{2\theta}}{\sigma} (x-\mu_h)t\right) \leq t^{\frac{\beta}{\theta}-1}\exp\left(-\frac{t^{2}}{2}\right).
    \end{align*}
    Since the upper bound satisfies $\int_{0}^{\infty}t^{\frac{\beta}{\theta}-1}\exp\left(-\frac{t^{2}}{2}\right)\dd t<\infty$, by the dominated convergence theorem, we obtain 
    \begin{align*}
        \lim_{x\rightarrow\infty}\psi_{-}^{h}(x) &=\lim_{x\to\infty}\int_{0}^{\infty}t^{\frac{\beta}{\theta}-1}\exp\left(-\frac{t^{2}}{2}-\frac{\sqrt{2\theta}}{\sigma} (x-\mu_h)t\right)\dd t \\
        &=\int_{0}^{\infty}\lim_{x\to\infty}\left[t^{\frac{\beta}{\theta}-1}\exp\left(-\frac{t^{2}}{2}-\frac{\sqrt{2\theta}}{\sigma} (x-\mu_h)t\right)\right]\dd t =0.
    \end{align*}

($d$) It follows from \eqref{eq:psi-h-} that 
    \begin{align*}
        (\psi_{-}^h)''(x)&=\left(\frac{\sqrt{2\theta}}{\sigma}\right)^2\int_{0}^{\infty}t^{\frac{\beta}{\theta}+1}\exp\left(-\frac{t^{2}}{2}-\frac{\sqrt{2\theta}}{\sigma} (x-\mu_h)t\right)\dd t>0 \quad \text{for $x\in\R$, and}\\
        (\psi_{-}^h)'''(x)&=-\left(\frac{\sqrt{2\theta}}{\sigma}\right)^3\int_{0}^{\infty}t^{\frac{\beta}{\theta}+2}\exp\left(-\frac{t^{2}}{2}-\frac{\sqrt{2\theta}}{\sigma} (x-\mu_h)t\right)\dd t<0 \quad \text{for $x\in\R$}.
    \end{align*}
    These inequalities show that $(\psi_{-}^h)''(x)$ is strictly positive and strictly decreasing on $(d,\infty)$ with an upper bound $(\psi_{-}^h)''(d)$. That means $\sup_{x\in(d,\infty)}\abs{(\psi_{-}^h)''(x)}\leq (\psi_{-}^h)''(d)<\infty$. So $(\psi_{-}^h)'(x)$ is Lipschitz continuous on $(d,\infty)$ for any finite $d\in(-\infty,\infty)$. 
    
    The proof for the Lipschitz continuity of $(\psi_+^{\ell})'(x)$ on $(-\infty,u)$ is similar and is omitted.
\qed

\section{Proofs in Section \ref{sec:parameters}}
\label{app:section-4}

\subsection{Proof of Lemma \ref{lem:minimizer}}
\label{app-sub:lem-minimizer}

By differentiating \eqref{eq:ODE-Jh-Jl-h}, \eqref{eq:ODE-psi-h},
and \eqref{eq:ODE-psi-ell-with-h} and combining the resulting
identities, we obtain
\begin{align}
\Gamma_h f_{A,B}(x)-(\beta+\theta)f_{A,B}(x)
&=-\theta(\mu_h-\mu_\ell)\left[J_\ell''(x)+B(\psi_+^\ell)''(x)\right]. \label{eq:ODE-f-h}
\end{align}
Similarly, by differentiating \eqref{eq:ODE-Jh-Jl-ell}, \eqref{eq:ODE-psi-l}, and \eqref{eq:ODE-psi-h-with-ell} and combining the resulting identities, we obtain 
\begin{align}
\Gamma_\ell f_{A,B}(x)-(\beta+\theta)f_{A,B}(x)
&=-\theta(\mu_h-\mu_\ell)\left[J_h''(x)+A(\psi_-^h)''(x)\right].\label{eq:ODE-f-ell}
\end{align}
Since $J\in\mathcal{C}^3(\mathbb{R})$ and $\psi_-^h,\psi_+^\ell\in\mathcal{C}^{\infty}(\mathbb{R})$, we have $f_{A,B}\in\mathcal{C}^2(\mathbb{R})$.  Thus, at any local minimizer $x_{\min}\in\mathbb{R}$, if it exists,
\[
f_{A,B}'(x_{\min})=0,
\qquad
f_{A,B}''(x_{\min})\geq0.
\]

If $B\geq0$, evaluating \eqref{eq:ODE-f-h} at $x_{\min}$ gives
\begin{align*}
(\beta+\theta)f_{A,B}(x_{\min})
&=\frac{\sigma^2}{2}f_{A,B}''(x_{\min})+\theta(\mu_h-\mu_\ell)
\left[
J_\ell''(x_{\min})+B(\psi_+^\ell)''(x_{\min})
\right]
>0,
\end{align*}
where the strict inequality follows from $J_\ell''(x_{\min})>0$, $(\psi_+^\ell)''(x_{\min})>0$, and $\theta(\mu_h-\mu_\ell)>0$.

If $B<0$, the assumption implies $A\geq0$. Evaluating \eqref{eq:ODE-f-ell} at $x_{\min}$ similarly yields
\begin{align*}
(\beta+\theta)f_{A,B}(x_{\min})
&=\frac{\sigma^2}{2}f_{A,B}''(x_{\min})+\theta(\mu_h-\mu_\ell)
\left[
J_h''(x_{\min})+A(\psi_-^h)''(x_{\min})
\right]
>0.
\end{align*}

Since $\beta+\theta>0$, we conclude that $f_{A,B}(x_{\min})>0$ in either case.
\qed

\subsection{Proof of Lemma \ref{lem:A>0,B>0}}
\label{app-sub:lem-A-B}

Let $(d,u,A,B)$ be such a solution. The smooth-fit equations \eqref{eq:optimal-equation-du-2} give
\[
f_{A,B}(d)=f_{A,B}(u)=0.
\]

Suppose first that $A\leq0$. The smooth-fit condition at $d$ (i.e., first equation in \eqref{eq:optimal-equation-du-2}) implies
\[
B=\frac{J'(d)+A(\psi_-^h)'(d)}{(\psi_+^\ell)'(d)}>0,
\]
where the strict inequality follows from $J'(d)>0$, $(\psi_-^h)'(d)<0$, and $(\psi_+^\ell)'(d)>0$; see Lemmas~\ref{lem:J}($d$)
and~\ref{lem:psi}($b$).

The limits in Lemmas~\ref{lem:J}($d$) and~\ref{lem:psi}($c$) yield
\[
\lim_{x\to-\infty}f_{A,B}(x)
=\lim_{x\to-\infty}\left[A(\psi_{-}^{h})'(x)-B(\psi_{+}^{\ell})'(x)+J'(x)\right]
=
\begin{cases}
+\infty, & A<0,\\
0, & A=0.
\end{cases}
\]
Together with $f_{A,B}(d)=f_{A,B}(u)=0$ and $d<u$, these limits imply that $f_{A,B}$ has a nonpositive local minimum in $(-\infty,u)$.
Indeed, if $f_{A,B}$ takes a negative value on this interval, its nonnegative limit at $-\infty$ and its zero value at $u$ ensure that a negative minimum is attained at an interior point of a suitable compact subinterval. Otherwise, $f_{A,B}\geq0$ on $(-\infty,u)$, and $d$ is a local minimizer by definition with value zero. Since $B>0$, either possibility contradicts Lemma~\ref{lem:minimizer}. Hence $A>0$.

Suppose next that $B\leq0$. Since $A>0$ has already been established, Lemma~\ref{lem:minimizer} again applies. The limits now give
\[
\lim_{x\to\infty}f_{A,B}(x)
=\lim_{x\to\infty}\left[A(\psi_{-}^{h})'(x)-B(\psi_{+}^{\ell})'(x)+J'(x)\right]=
\begin{cases}
+\infty, & B<0,\\
0, & B=0.
\end{cases}
\]
Applying the same argument on $(d,\infty)$, using $f_{A,B}(d)=0$ and the interior zero $f_{A,B}(u)=0$, shows that $f_{A,B}$ has a nonpositive local minimum in this interval. This again contradicts Lemma~\ref{lem:minimizer}.
Therefore $B>0$, completing the proof.
\qed

\subsection{Proof of Lemma \ref{lem:fAB}}
\label{app-sub:lem-fAB}

All parameter pairs considered below satisfy $A,B\geq0$. By Lemma~\ref{lem:minimizer}, $f_{A,B}$ cannot have a local
minimum with a nonpositive value. Moreover, $f_{A,B}\in\mathcal{C}^2(\mathbb{R})$.

We first show that $f_{A,B}$ is strictly positive between any two distinct zeros. Suppose that
\[
f_{A,B}(z_1)=f_{A,B}(z_2)=0, \qquad z_1<z_2.
\]
If $f_{A,B}$ takes a negative value in $(z_1,z_2)$, then its minimum on the compact interval $[z_1,z_2]$ is negative and is attained at an interior point. This contradicts Lemma~\ref{lem:minimizer}. Hence $f_{A,B}\geq0$ on $[z_1,z_2]$. If it vanishes at any point of $(z_1,z_2)$, that point is a local minimizer with value zero, giving the same contradiction. Therefore,
\[
f_{A,B}(x)>0,\qquad z_1<x<z_2.
\]
In particular, $f_{A,B}$ has at most two distinct zeros: a third zero would lie strictly between two other zeros, contradicting the preceding conclusion. We now consider the cases $A,B>0$, $A>0$ and $B=0$, and $A=0$ and $B>0$.

Suppose that $A>0$, $B>0$, and $\sup_{x\in\mathbb{R}}f_{A,B}(x)>0$. By the limits in Lemmas~\ref{lem:J}($d$) and~\ref{lem:psi}($c$),
\begin{align*}
    \lim_{x\to-\infty}f_{A,B}(x)&=\lim_{x\to-\infty}\left[A(\psi_{-}^{h})'(x)-B(\psi_{+}^{\ell})'(x)+J'(x)\right]=-\infty,\\
    \lim_{x\to\infty}f_{A,B}(x)&=\lim_{x\to\infty}\left[A(\psi_{-}^{h})'(x)-B(\psi_{+}^{\ell})'(x)+J'(x)\right]=-\infty.
\end{align*}
Choose $x_1\in\mathbb{R}$ such that $f_{A,B}(x_1)>0$. The intermediate value theorem gives a zero to the left of $x_1$ and another zero to its right. Since there are at most two distinct zeros, these are the unique zeros, denoted by $d(A,B)$ and $u(A,B)$, with $d(A,B)<x_1<u(A,B)$.

The observation above gives
\[
f_{A,B}(x)>0,\qquad x\in(d(A,B),u(A,B)).
\]
On each of the intervals $(-\infty,d(A,B))$ and $(u(A,B),\infty)$, the function is continuous and has no zeros, so it has a constant strict sign. The limits at $-\infty$ and $+\infty$ show that this sign is negative on both intervals. This proves \eqref{eq:fAB-sign} with $A>0$ and $B>0$.

Suppose that $A>0$, $B=0$, and $\sup_{x\in\mathbb{R}}f_{A,0}(x)>0$. The limits in Lemmas~\ref{lem:J}($d$) and~\ref{lem:psi}($c$) now give
\[
\lim_{x\to-\infty}f_{A,0}(x)=-\infty,
\qquad
\lim_{x\to\infty}f_{A,0}(x)=0.
\]
Choose $x_2\in\mathbb{R}$ such that $f_{A,0}(x_2)>0$. By the intermediate value theorem, $f_{A,0}$ has at least one zero to the left of $x_2$.

We show that it cannot have two distinct zeros. Suppose, to the contrary, that $z_3<z_4$ are its two zeros. Then
\[
f_{A,0}(x)>0,\qquad z_3<x<z_4.
\]
Since there are no further zeros, continuity implies that $f_{A,0}$ has a constant strict sign on $(z_4,\infty)$. This sign cannot be positive, because otherwise $z_4$ would be a local minimizer with value zero. Consequently,
\[
f_{A,0}(x)<0,\qquad x>z_4.
\]
Choose $y>z_4$. Since $f_{A,0}(y)<0$ and $f_{A,0}(x)\to0$ as $x\to+\infty$, there exists $R>y$ such that
$f_{A,0}(R)>f_{A,0}(y)$. Also, $f_{A,0}(z_4)=0>f_{A,0}(y)$. Thus the minimum of $f_{A,0}$ on $[z_4,R]$ is negative and is attained at an interior point. This contradicts Lemma~\ref{lem:minimizer}.
Therefore, $f_{A,0}$ has exactly one zero, denoted by $d(A,0)$. 
Since $d(A,0)<x_2$, continuity, uniqueness of the zero, and the limit at $-\infty$ and $\infty$ imply
\[
f_{A,0}(x)<0\quad\text{for }x<d(A,0),
\qquad
f_{A,0}(x)>0\quad\text{for }x>d(A,0).
\]
Setting $u(A,0)=\infty$, we obtain \eqref{eq:fAB-sign} for $A>0$ and $B=0$.

For $A=0$ and $B>0$, the corresponding limits are
\[
\lim_{x\to-\infty}f_{0,B}(x)=0,
\qquad
\lim_{x\to+\infty}f_{0,B}(x)=-\infty.
\]
An analogous argument shows that $f_{0,B}$ has exactly one zero, denoted by $u(0,B)$, and that
\[
f_{0,B}(x)>0
\quad\text{for }x<u(0,B),
\qquad
f_{0,B}(x)<0
\quad\text{for }x>u(0,B).
\]
Setting $d(0,B)=-\infty$  completes the proof of \eqref{eq:fAB-sign} for $A=0$ and $B>0$.

It remains to establish the derivative inequalities in \eqref{eq:fAB-lower-slope} and \eqref{eq:fAB-upper-slope}. The sign pattern and differentiability imply
\[
f_{A,B}'\bigl(d(A,B)\bigr)\geq0
\quad\text{if }A>0,
\qquad
f_{A,B}'\bigl(u(A,B)\bigr)\leq0
\quad\text{if }B>0.
\]
We show that neither derivative can vanish.

Let $z$ be any finite zero identified above, and suppose that $f_{A,B}'(z)=0$. Since $f_{A,B}(z)=0$, evaluating \eqref{eq:ODE-f-h} at $z$ yields
\[
\frac{\sigma^2}{2}f_{A,B}''(z)
=
-\theta(\mu_h-\mu_\ell)
\left[
J_\ell''(z)+B(\psi_+^\ell)''(z)
\right]
<0.
\]
Here we have used $B\geq0$, $J_\ell''(z)>0$, $(\psi_+^\ell)''(z)>0$, and $\theta(\mu_h-\mu_\ell)>0$. Thus $z$ would be a strict local maximum with value zero. This contradicts the strict positivity of $f_{A,B}$ on the side of $z$ lying inside $(d(A,B),u(A,B))$. Therefore the derivative at each finite zero is nonzero, proving the inequalities in
\eqref{eq:fAB-lower-slope} and \eqref{eq:fAB-upper-slope}.
\qed

\subsection{Proof of Lemma \ref{lem:S-properties}}
\label{app-sub:lem-S}

Since $(\psi_-^h)'<0$, $(\psi_+^\ell)'>0$, and $J'>0$, we have
\[
0\leq \bigl(f_{A,B}(x)\bigr)^+\leq J'(x)
\qquad
\text{for all }x\in\mathbb{R},\ A,B\geq0.
\]
Furthermore, Lemma \ref{lem:J} gives
\[
\int_{\mathbb{R}}J'(x)\,\mathrm{d}x
= \lim_{x\to\infty}J(x)-\lim_{x\to-\infty}J(x)
= \frac{r\theta(\mu_h-\mu_\ell)}{\beta(\beta+\theta)}
<\infty.
\]
Thus $S$ is finite everywhere on $[0,\infty)^2$. Since $f_{A,B}(x)$ depends continuously on $(A,B)$ for every fixed $x$, the dominated convergence theorem also implies that $S$ is continuous.

Because $f_{0,0}=J'>0$, the stated formula for $S(0,0)$ follows immediately. For every fixed $x\in\mathbb{R}$,
\[
\lim_{A\to\infty}f_{A,0}(x)=-\infty,
\qquad
\lim_{B\to\infty}f_{0,B}(x)=-\infty.
\]
Applying dominated convergence once again yields
\[
\lim_{A\to\infty}S(A,0) = \lim_{B\to\infty}S(0,B) = 0.
\]

Finally, the stated inequalities between the parameters imply that $f_{A_2,B_2}(x)<f_{A_1,B_1}(x)$ for every $x\in\mathbb{R}$. 
Consequently,
\[
\bigl(f_{A_2,B_2}(x)\bigr)^+
\leq
\bigl(f_{A_1,B_1}(x)\bigr)^+.
\]
If $S(A_1,B_1)>0$, continuity implies that $f_{A_1,B_1}$ is positive on a nonempty open interval. On that interval, the preceding inequality is strict. Integration therefore gives
\[
S(A_2,B_2)<S(A_1,B_1).
\]
\qed

\subsection{Proof of Lemma \ref{lem:area-level}}
\label{app-sub:lem-area}

Fix $\kappa\in(0,\frac{r\theta(\mu_h-\mu_\ell)}{\beta(\beta+\theta)})$. By Lemma \ref{lem:S-properties},
\[
S(0,0)=\frac{r\theta(\mu_h-\mu_\ell)}{\beta(\beta+\theta)}>\kappa,
\qquad
\lim_{A\to\infty}S(A,0)
=\lim_{B\to\infty}S(0,B)
=0.
\]
Continuity, the intermediate value theorem, and the strict comparison property of $S$ therefore give unique
$\bar{A}_\kappa,\bar{B}_\kappa>0$ such that
\[
S(\bar{A}_\kappa,0)=S(0,\bar{B}_\kappa)=\kappa.
\]

For each $B\in[0,\bar B_\kappa]$, monotonicity of $S$ from \eqref{eq:S-monotone} gives
\[
S(0,B)\geq S(0,\bar{B}_\kappa)=\kappa,\qquad S(\bar A_\kappa,B)\leq S(\bar{A}_\kappa,0)=\kappa.
\]
Hence there exists $A_\kappa(B)\in[0,\bar{A}_\kappa]$ satisfying $S(A_\kappa(B),B)=\kappa$. 
This solution is unique among all $A\geq0$.
Indeed, if $0\leq A_1<A_2$ were two solutions, then
$S(A_1,B)=\kappa>0$, and \eqref{eq:S-monotone} would give
$S(A_2,B)<S(A_1,B)$, a contradiction.
Uniqueness and the strict comparison property of $S$ also imply that
\[
A_\kappa(0)=\bar A_\kappa, \qquad A_\kappa(\bar B_\kappa)=0,
\]
and
\[
0<A_\kappa(B)<\bar{A}_\kappa\qquad\text{for }0<B<\bar{B}_\kappa.
\]

To prove continuity, let $B_n\to B$ in $[0,\bar B_\kappa]$. Every convergent subsequence of $A_\kappa(B_n)$ has a limit $a\in[0,\bar A_\kappa]$ satisfying
\[
S(a,B)=\kappa
\]
by continuity of $S$. Uniqueness implies $a=A_\kappa(B)$. Compactness of $[0,\bar A_\kappa]$ therefore yields $A_\kappa(B_n)\to A_\kappa(B)$, proving continuity.

Now fix $B\in(0,\bar{B}_\kappa)$. Since $A_\kappa(B)>0$ and $S(A_\kappa(B),B)>0$, Lemma \ref{lem:fAB} gives two finite zeros at which the derivatives of $f_{A_\kappa(B),B}$ are nonzero. Together with $(x,A,B)\mapsto f_{A,B}(x)$ is continuously differentiable, the implicit function theorem therefore implies that $d(A,B), u(A,B)\in\mathcal{C}^1(O_{A_\kappa(B),B})$,  where $O_{A_\kappa(B),B}$ is a neighborhood of $(A_\kappa(B),B)$. By continuity of $S$, $O_{A_\kappa(B),B}$ may be chosen so that $A,B>0$ and $S(A,B)>0$ throughout it. Moreover, it holds that $S(A,B)\in\mathcal{C}^1(O_{A_\kappa(B),B})$.

For any $(A,B)\in\mathcal{C}^1(O_{A_\kappa(B),B})$, differentiation of $S(A,B)=F_{A,B}(u(A,B))-F_{A,B}(d(A,B))$ and the identities $f_{A,B}(d(A,B))=f_{A,B}(u(A,B))=0$ give 
\begin{align*}
\frac{\partial S}{\partial A}(A,B)&=\psi_-^h(u(A,B))-\psi_-^h(d(A,B))<0, \\
\frac{\partial S}{\partial B}(A,B)&=\psi_+^\ell(d(A,B))-\psi_+^\ell(u(A,B))<0.
\end{align*}
The implicit function theorem applied to $S(A,B)=\kappa$ at $(A_{\kappa}(B),B)$, together with uniqueness, therefore gives $A_\kappa\in\mathcal{C}^1((0,\bar{B}_\kappa))$ and
\begin{equation*}
A_\kappa'(B)
=-\frac{\psi_+^\ell(u(A_\kappa(B),B))-\psi_+^\ell(d(A_\kappa(B),B))}{\psi_-^h(d(A_\kappa(B),B))-\psi_-^h(u(A_\kappa(B),B))}
<0,
\qquad 0<B<\bar B_\kappa,
\end{equation*}
where the inequality follows from the monotonicity of $\psi_+^\ell$ and $\psi_-^h$ in Lemma \ref{lem:psi} ($b$). Continuity on the closed interval and the mean value theorem now imply that $A_\kappa$ is strictly decreasing on $[0,\bar{B}_\kappa]$.

Finally, if $B>\bar B_\kappa$, then 
\[
S(A,B)\leq S(0,B)<S(0,\bar{B}_\kappa)=\kappa\qquad\text{for every }A\geq0.
\]
Thus every nonnegative solution of $S(A,B)=\kappa$ has $B\in[0,\bar{B}_\kappa]$ and, by uniqueness, $A=A_\kappa(B)$.
\qed

\subsection{Proof of Lemma \ref{lem:level-matching}}
\label{app-sub:lem-level}

Set $\kappa=\kappa_2-\kappa_1$. The assumptions imply $0<\kappa<\frac{r\theta(\mu_h-\mu_\ell)}{\beta(\beta+\theta)}$.
Let $\bar{A}_\kappa$, $\bar{B}_\kappa$, and $A_\kappa$ be as in Lemma \ref{lem:area-level}. For $B\in[0,\bar{B}_\kappa]$, define
\[
G_\kappa(B)=F_{A_\kappa(B),B}\bigl(d(A_\kappa(B),B)\bigr).
\]

We begin by proving the continuity of $G_\kappa$ on $[0,\bar{B}_\kappa]$. Fix $B_0\in[0,\bar{B}_\kappa]$.
Since $S(A_\kappa(B_0),B_0)=\kappa>0$, there exists $x_0\in\mathbb{R}$ such that
\[
f_{A_\kappa(B_0),B_0}(x_0)>0.
\]
Continuity of $A_\kappa$ implies that this inequality continues to hold for $B$ sufficiently close to $B_0$. By Lemma \ref{lem:fAB},
\begin{align*}
G_\kappa(B)&=F_{A_\kappa(B),B}(x_0)-\int_{d(A_\kappa(B),B)}^{x_0}f_{A_\kappa(B),B}(x)\,\mathrm{d}x\\
&=F_{A_\kappa(B),B}(x_0)-\int_{-\infty}^{x_0}\bigl(f_{A_\kappa(B),B}(x)\bigr)^+\,\mathrm{d}x.
\end{align*}
The first term is continuous in $B$. For the integral, the bound
\[
0\leq \bigl(f_{A_\kappa(B),B}(x)\bigr)^+\leq J'(x),
\qquad
J'\in \mathcal{L}^1(\mathbb{R}),
\]
allows us to apply the dominated convergence theorem. 
Thus $G_\kappa$ is continuous at $B_0$, including when $B_0=0$ or $B_0=\bar B_\kappa$.

For $0<B<\bar B_\kappa$, Lemma \ref{lem:fAB} and Lemma \ref{lem:area-level} show that $d(A,B)$ and $A_{\kappa}(B)$ are differentiable at $(A_{\kappa}(B),B)$. Using $f_{A_\kappa(B),B}(d(A_\kappa(B),B))=0$, differentiation gives
\[
G_\kappa'(B)=A_\kappa'(B)\psi_-^h(d(A_\kappa(B),B))-\psi_+^\ell(d(A_\kappa(B),B))<0,
\]
because $A_\kappa'(B)<0$ (see Lemma \ref{lem:area-level}) and both $\psi_-^h$ and $\psi_+^\ell$ are strictly positive (see Lemma \ref{lem:psi}). Together with continuity on $[0,\bar B_\kappa]$, the mean value theorem implies that $G_\kappa$ is strictly decreasing on this closed interval.

At $B=0$, it follows from Lemma \ref{lem:fAB} that $u(\bar{A}_\kappa,0)=\infty$ and then
\[
F_{\bar A_\kappa,0}(u(\bar{A}_\kappa,0))=\lim_{x\to\infty}F_{\bar{A}_\kappa,0}(x)
=\lim_{x\to\infty}\left[\bar{A}_\kappa\psi_{-}^{h}(x)+J(x)\right]
=\frac{r\theta(\mu_h-\mu_\ell)}{\beta(\beta+\theta)}-\frac{c}{\beta},
\]
where the last equality follows from \eqref{eq:limit-psi-h-2} and \eqref{eq:lim-J}.
Therefore,
\[
\kappa=S(\bar{A}_\kappa,0)
=F_{\bar A_\kappa,0}(u(\bar{A}_\kappa,0))-F_{\bar A_\kappa,0}(d(\bar{A}_\kappa,0))
=\frac{r\theta(\mu_h-\mu_\ell)}{\beta(\beta+\theta)}-\frac{c}{\beta}-G_\kappa(0),
\]
so
\begin{align}
\label{eq:Gk-lower-bound}
G_\kappa(0)=\frac{r\theta(\mu_h-\mu_\ell)}{\beta(\beta+\theta)}-\frac{c}{\beta}-\kappa.
\end{align}

At $B=\bar{B}_\kappa$, we have $A_\kappa(\bar{B}_\kappa)=0$ (see \eqref{eq:A-kappa-bound}) and $d(A_\kappa(\bar{B}_\kappa),\bar{B}_\kappa)=d(0,\bar{B}_\kappa)=-\infty$ (see Lemma \ref{lem:fAB}).
Thus, 
\begin{align}
\nonumber
G_\kappa(\bar{B}_\kappa)
&=F_{A_\kappa(\bar{B}_\kappa),\bar{B}_\kappa}
(d(A_\kappa(\bar{B}_\kappa),\bar{B}_\kappa)) \\
&=\lim_{x\to-\infty}F_{0,\bar{B}_\kappa}(x) \nonumber\\
&=\lim_{x\to-\infty}
\left[-\bar{B}_\kappa \psi_+^{\ell}(x)+J(x)\right]
=-\frac{c}{\beta},\label{eq:Gk-upper-bound}
\end{align}
where the last equality follows from $\lim_{x\to-\infty}\psi_+^{\ell}(x)=0$ and $\lim_{x\to-\infty}J(x)=-\frac{c}{\beta}$; see \eqref{eq:limit-psi-ell-1} and \eqref{eq:lim-J}.

Since $\kappa=\kappa_2-\kappa_1$, the assumptions on $\kappa_1$ and $\kappa_2$ give
\[
-\frac{c}{\beta}\leq \kappa_1\leq \frac{r\theta(\mu_h-\mu_\ell)}{\beta(\beta+\theta)}-\frac{c}{\beta}-\kappa.
\]
Continuity and strict monotonicity of $G_\kappa$, together with \eqref{eq:Gk-lower-bound} and \eqref{eq:Gk-upper-bound}, 
therefore yield a unique $B\in[0,\bar B_\kappa]$ such that $G_\kappa(B)=\kappa_1$.
Then
\[
F_{A_\kappa(B),B}\bigl(d(A_\kappa(B),B)\bigr)=\kappa_1
\]
and
\[
F_{A_\kappa(B),B}\bigl(u(A_\kappa(B),B)\bigr)
=F_{A_\kappa(B),B}\bigl(d(A_\kappa(B),B)\bigr)+S(A_\kappa(B),B)
=\kappa_1+\kappa
=\kappa_2.
\]

Conversely, let $(A,B)$ be any nonnegative pair with
$A+B>0$ and $S(A,B)>0$ satisfying \eqref{eq:F-kappa1-kappa2}.
The area identity gives
\[
S(A,B)
=F_{A,B}\bigl(u(A,B)\bigr)-F_{A,B}\bigl(d(A,B)\bigr)
=\kappa_2-\kappa_1=\kappa.
\]
Lemma~\ref{lem:area-level} therefore implies
$B\in[0,\bar B_\kappa]$ and $A=A_\kappa(B)$.
The uniqueness of the solution to $G_\kappa(B)=\kappa_1$ then proves uniqueness of the pair $(A,B)$.

Finally, strict monotonicity of $G_\kappa$ gives 
\[
\kappa_1=-\frac{c}{\beta}\iff B=\bar B_\kappa
\iff A=A_{\kappa}(B)=0.
\]
Similarly,
\[
\kappa_2=\frac{r\theta(\mu_h-\mu_\ell)}{\beta(\beta+\theta)}-\frac{c}{\beta}
\iff \kappa_1=\frac{r\theta(\mu_h-\mu_\ell)}{\beta(\beta+\theta)}-\frac{c}{\beta}-\kappa
\iff B=0.
\]
The assertions about finiteness of $d(A,B)$ and $u(A,B)$ follow from Lemma \ref{lem:fAB}.
\qed

\section{Proofs in Section \ref{sec:optimality}}
\label{app:section-5}

\subsection{Proof of Lemma \ref{lem:bound}}
\label{app-sub:lem-bound}
    Under any admissible policy, $X=\{X(t),t\geq0\}$ is given by $\dd X(t) = \theta(\mu(t)-X(t))\dd t + \sigma \dd B(t)$ with $X(0)=x$, which implies
    \begin{align*}
        X(t) = e^{-\theta t}x + \int_0^t \theta e^{-\theta(t-s)}\mu(s)\dd s + \sigma \int_0^t e^{-\theta(t-s)}\dd B(s),\quad t\geq0.
    \end{align*}
    Then, 
    \begin{align}
        &\E_{(x,\mu)}\left[(X(t))^2\right]\nonumber\\ 
        &\quad\leq 3\E_{(x,\mu)}\left[\left(e^{-\theta t}x\right)^2 + \left(\int_0^t \theta e^{-\theta(t-s)}\mu(s)\dd s\right)^2 + \left(\sigma \int_0^t e^{-\theta(t-s)}\dd B(s)\right)^2\right], \label{eq:E-bound}
    \end{align}
    where we used $(a+b+c)^2\leq 3(a^2+b^2+c^2)$. For any $t\geq0$, we have $\left(e^{-\theta t}x\right)^2\leq x^2$, 
    \[
    \left(\int_0^t \theta e^{-\theta(t-s)}\mu(s)\dd s\right)^2 \leq \bar{\mu}^{2}\left(\int_0^t \theta e^{-\theta(t-s)}\dd s\right)^2
        =\bar{\mu}^{2}\left(1-e^{-\theta t}\right)^2\leq \bar{\mu}^{2},
    \]
    and 
    \begin{align*}
        \E_{(x,\mu)}\left[\left(\sigma \int_0^t e^{-\theta(t-s)}\dd B(s)\right)^2\right]
        &= \sigma^2 \E_{(x,\mu)}\left[\int_0^t e^{-2\theta(t-s)}\dd s\right] \\
        &= \frac{\sigma^2}{2\theta}\left[1-e^{-2\theta t}\right]\leq  \frac{\sigma^2}{2\theta},
    \end{align*}
    where the first equality follows from the It\^{o} isometry. 
    
    Substituting the three inequalities above into \eqref{eq:E-bound} gives \eqref{eq:bound-lemma}.
\qed

\end{appendices}
\end{document}